\documentclass{amsart}
\usepackage{amssymb, mathtools, fullpage, color, ulem}
\usepackage[colorinlistoftodos]{todonotes}
\usepackage[colorlinks=true, pdfstartview=FitV,linkcolor=blue,citecolor=blue,urlcolor=blue]{hyperref}
\usepackage{bbm}

\setcounter{tocdepth}{1}

\theoremstyle{definition}
\newtheorem {theorem}{Theorem}[section]
\newtheorem {lemma}[theorem]{Lemma}
\newtheorem {corollary}[theorem]{Corollary}

\newtheorem{remark}[theorem]{Remark}
\newtheorem{example}[theorem]{Example}

\newenvironment{red}{\relax\color{red}}{\relax}
\newenvironment{blue}{\relax\color{blue}}{\hspace*{.5ex}\relax}
\newenvironment{jaune}{\relax\color{magenta}}{\relax}
\newcommand{\ber}{\begin{red}}
\newcommand{\er}{\end{red}}
\newcommand{\beb}{\begin{blue}}
\newcommand{\eb}{\end{blue}}
\newcommand{\bej}{\begin{jaune}}
\newcommand{\ej}{\end{jaune}}

\newcommand{\prceil}{\rceil^{\mathfrak p}}
\newcommand{\seteq}{\coloneqq}
\numberwithin{equation}{section}

\begin{document}

\title[Murmurations of Dirichlet Characters]{Murmurations of Dirichlet Characters}

\date{\today}

\author[K.-H. Lee]{Kyu-Hwan Lee$^{\star}$}
\thanks{$^{\star}$This work was partially supported by a grant from the Simons Foundation (\#712100).}
\address{Department of Mathematics, University of Connecticut, Storrs, CT 06269, U.S.A. \hfill \break \indent Korea Institute for Advanced Study, Seoul 02455, Republic of Korea}
\email{khlee@math.uconn.edu}

\author[T. Oliver]{Thomas Oliver}
\address{University of Westminster, London, U.K.}
\email{T.Oliver@westminster.ac.uk}

\author[A. Pozdnyakov]{Alexey Pozdnyakov}
\address{Department of Mathematics, University of Connecticut, Storrs, CT 06269, U.S.A.}
\email{alexey.pozdnyakov@uconn.edu}

\begin{abstract}
We calculate murmuration densities for two families of Dirichlet characters. 
The first family contains complex Dirichlet characters normalized by their Gauss sums.
Integrating the first density over a geometric interval yields a murmuration function compatible with experimental observations.
The second family contains real Dirichlet characters weighted by a smooth function with compact support.
We show that the second density exhibits a universality property analogous to Zubrilina's density for holomorphic newforms, and it interpolates the phase transition in the the $1$-level density for a symplectic family of $L$-functions.
\end{abstract}

\maketitle

\section{Introduction}\label{s:intro}
Following a programme of machine learning in arithmetic \cite{HLO1,HLO2,HLO3}, a striking oscillation in the average value of Frobenius traces for certain families of elliptic curves was observed in \cite{HLOP}. 
This oscillation was termed a \textsl{murmuration}.
In the original work, the average was taken over elliptic curves $E/\mathbb{Q}$ with conductor in certain intervals. 
Similar averages for other arithmetic families, including higher weight modular forms and higher genus curves, will be explored in \cite{HLOPS}.   

After the initial observation, three important ideas emerged based on contributions of J. Ellenberg, A. Sutherland, J. Bober, and P. Sarnak.
Firstly, on Ellenberg's suggestion, Sutherland studied murmurations attached not only to newforms with rational coefficients, but, moreover, Galois orbits of those with coefficients in number fields. 
Secondly, Bober proposed a so-called \textsl{local average}, which eliminated the role played by the interval from the original construction.
Thirdly, Sarnak introduced a notion of \textsl{murmuration density}, which involved additional averaging over primes and weighting by smooth functions of compact support \cite{S23i}.
To motivate his construction, Sarnak articulated the relationship between murmurations and the $1$-level densities for families of $L$-functions (see \cite{S23ii}).
All three ideas informed the important work of N.~Zubrilina, in which a murmuration density for holomorphic newforms was calculated \cite{Z23}.
In this paper we calculate murmuration densities for two families of Dirichlet characters, both of which come from averaging characters over primes in short intervals (see Examples~\ref{ex.tpaverage} and \ref{ex.doubleaverage}).

The first murmuration density we compute involves odd (resp. even) complex Dirichlet characters $\chi$ normalized by their Gauss sums $\tau(\chi)$.
By way of justification, note that  ${\chi}(p)/\tau(\chi)$ is the Fourier coefficient of $\overline{\chi}$ when expanded in terms of additive characters (see, e.g., \cite[equation~(3.12)]{IK}), and so this is a natural analogue of the modular form case. 
Integrating the murmuration density over a given geometric interval yields
the average value of $\chi(p)/\tau(\chi)$ over odd (resp. even) Dirichlet characters with conductor in that interval, which is a scale invariant oscillation comparable with the murmuration first observed for elliptic curves (see Figure~\ref{fig:idft_dyadic_avg}). 
More precisely, we let $\mathcal{D}_{+}(N)$ (resp. $\mathcal{D}_-(N)$) denote the set of primitive even (resp. odd) Dirichlet characters mod $N$. 
For $x\in\mathbb{R}_{>0}$, denote by $\lceil x \prceil$ the smallest prime that is bigger than or equal to $x$. 
For
$c\in\mathbb{R}_{>1}$, $\delta \in (0,1)$, and $y\in\mathbb{R}_{>0}$, define
\begin{align} 
P_\pm (y, X, c)& = \frac{\log X}{X}\sum_{\substack{N \in [X, cX] \\ N \text{ prime}}} \sum_{\chi \in \mathcal{D}_\pm(N)} \frac{\chi(\lceil yX \prceil)}{\tau(\chi)}, \label{eq.P} \\ 
\widetilde P_\pm (y, X, \delta)& =  \frac{\log X}{X^{\delta}}\sum_{\substack{N \in [X, X+X^\delta] \\ N \text{ prime}}} \sum_{\chi \in \mathcal{D}_\pm(N)} \frac{\chi(\lceil yX \prceil)}{\tau(\chi)}.\label{eq.Pt} \end{align}
We plot instances of the functions $P_{\pm}(y,X,c)$ and $\widetilde P_{\pm}(y,X,\delta)$ in Figure~\ref{fig:prime_lavg}.
The factors $\log(X)/X$ and $\log(X)/X^{\delta}$ are connected to the number of primes in the respective intervals.
In the case of equation~\eqref{eq.Pt}, we work conditional on the Riemann hypothesis, which guarantees that the interval $[X,X+X^{\delta}]$ contains primes provided that $\delta>\frac12$.
Our first theorem is stated as follows:

\begin{theorem}\label{thm.main}
Fix $y\in\mathbb{R}_{>0}$.  If $c\in\mathbb{R}_{>1}$, 
then 
\begin{equation}\label{eq.Athm-1}
\lim_{X \to \infty}   P_\pm (y, X, c) = \begin{cases}
\int_1^c \cos \left(\frac{2\pi y}{x}\right)dx,  & \text{ if }+,\\
-i\int_1^c \sin\left(\frac{2\pi y}{x}\right)dx, & \text{ if }-,
\end{cases} 
\end{equation}
and, assuming the Riemann hypothesis, if $\delta \in (\frac12,1)$, then
\begin{equation}
\label{eq.LAthm-1}
\lim_{X \to \infty} \widetilde P_\pm (y, X, \delta) = \begin{cases}
\cos(2\pi y), & \text{ if $+$},\\
-i\sin(2\pi y), & \text{ if $-$}.
\end{cases}
\end{equation}
\end{theorem}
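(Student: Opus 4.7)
The strategy is to use Gauss-sum duality to open up $\chi(p)/\tau(\chi)$, then sum over $\chi$ via parity-restricted orthogonality, reducing everything to averages over primes of a trigonometric function. Combining the two classical identities $\tau(\chi)\tau(\overline\chi) = \chi(-1)N$ and $\chi(p)\tau(\overline\chi) = \sum_{a \bmod N} \overline\chi(a)\,e^{2\pi i a p/N}$, both valid for primitive $\chi$ modulo $N$, yields
\[
\frac{\chi(p)}{\tau(\chi)} = \frac{\chi(-1)}{N}\sum_{a \bmod N} \overline\chi(a)\,e^{2\pi i a p/N}.
\]
For $N$ prime, every non-principal character is primitive and lies in $\mathcal{D}_+(N)\cup\mathcal{D}_-(N)$, the unique excluded character being the even principal character. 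Interchanging the order of summation and applying the parity-restricted orthogonality
\[
\sum_{\chi \in \mathcal{D}_+(N)} \overline\chi(a) = \tfrac{N-1}{2}(\mathbf{1}_{a \equiv 1} + \mathbf{1}_{a \equiv -1}) - 1, \qquad \sum_{\chi \in \mathcal{D}_-(N)} \overline\chi(a) = \tfrac{N-1}{2}(\mathbf{1}_{a \equiv 1} - \mathbf{1}_{a \equiv -1})
\]
(for $a$ coprime to $N$; the $-1$ accounts for the principal character) collapses the inner sum. Setting $p = \lceil yX \prceil$ and noting that $N \mid p$ contributes zero on both sides, one obtains
\[
\sum_{\chi \in \mathcal{D}_+(N)} \frac{\chi(p)}{\tau(\chi)} = \tfrac{N-1}{N}\cos(2\pi p/N) + \tfrac{1}{N}, \qquad \sum_{\chi \in \mathcal{D}_-(N)} \frac{\chi(p)}{\tau(\chi)} = -\,i\,\tfrac{N-1}{N}\sin(2\pi p/N).
\]

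For the geometric-interval limit \eqref{eq.Athm-1}, I would apply the prime number theorem via partial summation to replace the sum over primes $N \in [X, cX]$ by $\int_X^{cX}\cos(2\pi p/t)\,dt/\log t$ up to acceptable error, then substitute $t = Xs$. The unconditional bound $p = yX + o(X)$, immediate from prime gaps being $o(X)$, allows the replacement of $\cos(2\pi p/(Xs))$ by $\cos(2\pi y/s)$ uniformly for $s \in [1, c]$. The prefactor $(\log X)/X$ cancels the prime density $X/\log X$, leaving $\int_1^c \cos(2\pi y/s)\,ds$; the odd case follows identically with $-i\sin$ in place of $\cos$.

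For the short-interval limit \eqref{eq.LAthm-1}, the key observation is that $p/N$ is essentially constant over $N \in [X, X+X^\delta]$. Under RH, the prime-gap bound $\lceil yX\prceil = yX + O(X^{1/2+\epsilon})$ combined with $|N - X| \leq X^\delta$ gives $p/N = y + O(X^{\delta-1} + X^{-1/2+\epsilon}) = y + o(1)$ uniformly in $N$, so $\cos(2\pi p/N) = \cos(2\pi y) + o(1)$. The trigonometric sum then factors as $(\cos(2\pi y) + o(1))\bigl(\pi(X+X^\delta) - \pi(X)\bigr)$, and the RH-conditional short-interval prime number theorem gives $\pi(X+X^\delta) - \pi(X) \sim X^\delta/\log X$ precisely when $\delta > 1/2$; multiplying by $(\log X)/X^\delta$ yields $\cos(2\pi y)$.

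Each ingredient --- Gauss-sum inversion, parity orthogonality, partial summation, and the short-interval PNT --- is a classical tool, so there is no single hard step. The place where the Riemann hypothesis genuinely enters is the last one: the threshold $\delta > 1/2$ for the short-interval prime-counting asymptotic is exactly what RH supplies, and the prime-gap estimate, logically subsumed, is the cleanest way to see that $\cos(2\pi p/N)$ can be pulled out of the sum uniformly.
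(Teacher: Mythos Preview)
Your proposal is correct and follows essentially the same approach as the paper: derive the identities $\sum_{\chi\in\mathcal{D}_\pm(N)}\chi(p)/\tau(\chi)=\tfrac{N-1}{N}\cos(2\pi p/N)+\tfrac1N$ (resp.\ $-i\tfrac{N-1}{N}\sin(2\pi p/N)$) via Gauss-sum inversion and parity orthogonality (this is the paper's Lemma~\ref{lem.sCGcossin}), then reduce everything to averages of a trigonometric function over primes. For the geometric interval you invoke partial summation where the paper instead argues that $(N/X)_{N\ \text{prime}\in[X,cX]}$ equidistributes on $[1,c]$ and passes to a Riemann sum, but these are equivalent ways of packaging the prime number theorem; for the short interval your argument---pull $\cos(2\pi p/N)=\cos(2\pi y)+o(1)$ out and apply the RH-conditional short-interval prime count---is exactly the paper's Lemma~\ref{lem:often-used}.
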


The proof of Theorem~\ref{thm.main} uses the prime number theorem, and the relationship between additive and multiplicative characters.
The fit for $P_{\pm}(y,X,c)$ and $\widetilde P_{\pm}(y,X,\delta)$ given by Theorem~\ref{thm.main} is depicted in Figure~\ref{fig:prime_lavg}, in which we have used the relatively small value $X=2^{10}$.
For small values of $X$, the fit given by Theorem~\ref{thm.main} is far from perfect.
Upon closer inspection, the proof of Theorem~\ref{thm.main} indicates that equation~\eqref{eq.LAthm-1} may be reformulated to incorporate certain composite conductors, and this yields a better fit even for relatively small values of $X$ (cf. Figure~\ref{fig:idft_dyadic_lavg}).
We specify this reformulation in Section~\ref{s:generalconductors}, and furthermore establish variants of Theorem~\ref{thm.main} for arbitrary conductors (in which case we no longer need to assume the Riemann hypothesis).

\medskip

The second murmuration density we compute is the more challenging case of real Dirichlet characters.
In this case, the average value of the Fourier coefficients for those with conductor in a geometric interval yields a noisy image (see Figure~\ref{fig:idft_real_dyadic_sum}).
To counteract this, we use techniques originally developed by Katz--Sarnak and refined by Soundararajan \cite{S00}. 
For $d\in\mathbb{Z}$, we use the notation $\chi_d = \left(\frac{d}{\cdot}\right)$. We also let $\mathcal{G}$ be the set of odd squarefree integers. 
If $d \in \mathcal{G}$ and $d > 0$ (resp. $d < 0$), then $\chi_{8d}$ is an even (resp. odd) primitive real character of conductor $8d$. 
For a smooth Schwartz function $\Phi \geq 0$ with compact support, define
\begin{equation}\label{eq.Mp-1}
M_{\Phi}(y,X,\delta)=\frac{\log X}{X^{1+\delta}}\sum_{\substack{p\in[yX,yX+X^{\delta}] \\ p \text{ prime}}}\sum_{\substack{d\in\mathcal{G}}}\Phi\left(\frac{d}{X}\right)\chi_{8d}(p)\sqrt{p}.
\end{equation}
Notice that one can isolate even (resp. odd) characters in this sum by choosing $\Phi$ to have compact support in $\mathbb{R}_{> 0}$ (resp. $\mathbb{R}_{< 0}$). 
In Figure \ref{fig:quad_murm}, we plot equation~\eqref{eq.Mp-1} for two choices of $\Phi$. 

\begin{theorem}\label{thm.main-real}
Fix $y\in\mathbb{R}_{>0}$.
If $\delta\in(\frac34,1)$ and $\Phi \geq 0$ is a smooth Schwartz function with compact support, then, assuming the Generalized Riemann hypothesis, we have
\begin{equation}\label{eq.limreal}
M_\Phi (y, \delta) \seteq \lim_{X\rightarrow\infty}M_{\Phi}(y,X,\delta) =  \frac{1}{2}\sum_{\substack{a =1 \\ (a, 2) = 1}}^\infty \frac{\mu(a)}{a^2} \sum_{m =1}^\infty (-1)^m \widetilde{\Phi}\left(\frac{m^2}{2a^2y} \right),
\end{equation}
where
\begin{equation}\label{eq.Stransform}
\widetilde{\Phi}(\xi)= \int_{-\infty}^\infty \left(\cos(2\pi \xi x)+ \sin(2\pi \xi x)\right)\Phi(x)dx.
\end{equation}
\end{theorem}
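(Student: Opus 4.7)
The plan is to adapt Soundararajan's Poisson summation identity for sums of the Kronecker symbol $\left(\tfrac{d}{\cdot}\right)$ over squarefree $d$ \cite{S00}, and then to average the resulting dual sum over primes $p$ in the short interval $[yX,yX+X^\delta]$. First I would interchange the order of summation to bring the sum over $d$ inside, writing
\begin{equation*}
M_\Phi(y,X,\delta) = \frac{\log X}{X^{1+\delta}} \sum_{\substack{p \in [yX,yX+X^\delta]\\ p\text{ prime}}} \sqrt{p}\, S_p(X), \qquad S_p(X) \seteq \sum_{d \in \mathcal{G}} \Phi(d/X)\chi_{8d}(p).
\end{equation*}
I would detect the squarefree condition in $S_p(X)$ via $\mu^2(d) = \sum_{a^2 \mid d}\mu(a)$, reducing to
\begin{equation*}
S_p(X) = \sum_{\substack{a \text{ odd}\\ (a,p)=1}} \mu(a) \sum_{e \text{ odd}} \Phi(a^2 e/X)\,\chi_{8e}(p),
\end{equation*}
and apply Poisson summation to the inner sum over $e$, which is periodic modulo $8p$ once the parity condition is captured by a character modulo $8$.

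The complete character sum arising from Poisson is a quadratic Gauss sum evaluating to $\sqrt{p}\,\epsilon_p$ (with $\epsilon_p\in\{1,i\}$ depending on $p\bmod 4$) times a Kronecker symbol in the dual variable; a standard completion-of-squares manipulation should produce the square $m^2$ in the dual variable. Schematically this gives
\begin{equation*}
S_p(X) \approx \frac{X}{\sqrt{p}} \sum_{a \text{ odd}} \frac{\mu(a)}{a^2} \sum_{m \neq 0} (\pm)\,\widehat{\Phi}\!\left(\frac{m^2 X}{2a^2 p}\right),
\end{equation*}
so that multiplication by the external $\sqrt{p}$ cancels the denominator and the substitution $p\sim yX$ brings the argument to $m^2/(2a^2y)$. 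The sign $(-1)^m$ in the statement will arise from writing the odd-$e$ indicator as $(1-(-1)^e)/2$ and tracking the two resulting pieces in the Poisson dual; the combination $\cos(2\pi\xi x)+\sin(2\pi\xi x)$ inside $\widetilde{\Phi}$ matches $\mathrm{Re}((1+i)\widehat{\Phi})$, which is precisely the real part surviving after averaging $\epsilon_p$ (equidistributed between $1$ and $i$ modulo $4$) and the Kronecker factor over primes in the interval via the prime number theorem in short arithmetic progressions, valid under RH for $\delta>1/2$. The prefactor $\tfrac{1}{2}$ then reflects the $m\leftrightarrow -m$ symmetry.

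The main obstacle is error control. The Poisson identity itself is exact, but the dual sum must be truncated at $|m|\ll aX^{\epsilon}$ via rapid decay of $\widehat{\Phi}$, and the tails must be bounded. The trivial pointwise estimate $S_p(X)\ll X^{1/2+\epsilon}$ is not sharp enough once multiplied by the $\sim X^\delta/\log X$ primes and the outer $\sqrt{p}$. Instead I would exploit cancellation among primes by expressing the error as a weighted sum of $\sum_{p\in[yX,yX+X^\delta]}\chi_{8d}(p)$ with $d$ in the support of $\Phi(\cdot/X)$, and bound each short-interval character sum via the explicit formula for $L(s,\chi_{8d})$ under GRH. Balancing the resulting saving against the window width is what forces $\delta>3/4$, and the technical heart of the proof lies in arranging this balance uniformly in the dual variables $a$ and $m$.
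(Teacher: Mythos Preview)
Your high-level plan matches the paper's: open the squarefree condition via M\"obius, apply Soundararajan's Poisson summation to the inner sum over $d$, and control errors by GRH bounds on short character sums over primes. But two of the specific mechanisms you describe are wrong, and as written the argument would not produce the stated formula.

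First, the $m^2$ in the dual variable does not come from any ``completion of squares''. After Poisson the dual variable $k$ ranges over all nonzero integers and the summand carries the factor $\left(\tfrac{k}{p}\right)$; schematically
\[
S_p(X)=\frac{X}{2\sqrt{p}}\left(\frac{2}{p}\right)\sum_{\substack{a\text{ odd}\\a\le A}}\frac{\mu(a)}{a^2}\sum_{k\ne 0}(-1)^k\left(\frac{k}{p}\right)\widetilde{\Phi}\!\left(\frac{kX}{2a^2p}\right)+(\text{tail}).
\]
The main term is the contribution of perfect squares $k=m^2$, for which $\left(\tfrac{k}{p}\right)=1$. For each non-square $k$ the symbol $\left(\tfrac{k}{\cdot}\right)$ is a non-principal character, and its sum over $p\in[yX,yX+X^\delta]$ is $O\!\left((yX)^{1/2+\epsilon}\right)$ under GRH. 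This square/non-square dichotomy in the dual variable is the analytic core of the proof and is a \emph{second}, independent use of the GRH short-sum bound; the use you describe (bounding $\sum_p\chi_{8d}(p)$ in the original variable $d$) corresponds only to controlling the tail $a>A$ of the M\"obius truncation, which the paper handles separately as $R_{\Phi,A}$.

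Second, your account of how $\widetilde{\Phi}$ appears is incorrect. You propose averaging $\epsilon_p\in\{1,i\}$ over primes in residue classes mod $4$, but the Poisson output carries $\tau_k(p)=\epsilon_p\left(\tfrac{k}{p}\right)\sqrt{p}$, and since $\epsilon_p$ and $\left(\tfrac{k}{p}\right)$ both depend on $p$ they cannot be averaged independently. The paper instead removes $\epsilon_p$ by a pointwise algebraic identity valid for every odd prime: setting $G_k(p)=\bigl(\tfrac{1-i}{2}+\left(\tfrac{-1}{p}\right)\tfrac{1+i}{2}\bigr)\tau_k(p)$, one has $G_k(p)=\left(\tfrac{k}{p}\right)\sqrt{p}\in\mathbb{R}$, and pairing $k$ with $-k$ gives
\[
\sum_{k\in\mathbb{Z}}\tau_k(p)\,\widehat{\Phi}\!\left(\tfrac{kX}{2a^2p}\right)=\sum_{k\in\mathbb{Z}}G_k(p)\,\widetilde{\Phi}\!\left(\tfrac{kX}{2a^2p}\right).
\]
Thus the transform $\widetilde{\Phi}$ is forced before any summation over $p$; no equidistribution mod $4$ is needed or used.
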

We note that, in Figure~\ref{fig:quad_murm}, and the related Figures~\ref{fig:idft_dyadic_avg} and~\ref{fig:quad_murm-1}, we use the value $\delta=\frac23$, which is smaller than the minimal $\delta$ included in Theorem~\ref{thm.main-real}. 
These figures offer some evidence that Theorem~\ref{thm.main-real} may remain valid for such values of $\delta$. 
Furthermore, although the smoothness of $\Phi$ is crucial in enabling the analytic tools used in the proof, we expect Theorem~\ref{thm.main-real} to hold for weight functions with a sharp cut-off as well (we refer to Figure \ref{fig:sharp} for numerical support of this claim).
We remark that a murmuration density for a family of real Dirichlet characters was first computed by Rubinstein and Sarnak in \cite[equation~(13)]{S23ii}, although their formulation and evaluation is different to ours in places. 
Rubinstein and Sarnak also noted that the murmuration density interpolates the phase transition for the $1$-level density of a symplectic family,
which emerges from our analysis in the following form.
\begin{corollary}\label{c:1ld}
Let $\Phi \ge 0$ be a Schwartz function with compact support and let $\delta \in (\frac34, 1). $ 
Assuming the Generalized Riemann hypothesis, we have
\begin{equation}\label{eq.1leveldensity1}
\lim_{y\rightarrow0^+} M_\Phi(y, \delta)
=0, \quad \text{ and } \quad   \lim_{y\rightarrow \infty} M_{\Phi}(y, \delta) =-\frac 2 {\pi^2} \widetilde \Phi(0),
\end{equation}
where $M_\Phi(y,\delta)$ is defined in equation~\eqref{eq.limreal}.
\end{corollary}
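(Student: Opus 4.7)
The plan is to analyze the limits of the explicit formula~\eqref{eq.limreal} directly, by reducing them to the asymptotics of the inner alternating sum
\[ S_a(y) \coloneqq \sum_{m=1}^{\infty}(-1)^m\,\widetilde\Phi\!\left(\frac{m^2}{2a^2y}\right), \qquad M_\Phi(y,\delta)=\tfrac12\sum_{(a,2)=1}\frac{\mu(a)}{a^2}\,S_a(y). \]
The M\"obius weight $\mu(a)/a^2$ provides absolute convergence of the outer sum, so it suffices to control $S_a(y)$ as a function of $c\coloneqq 2a^2y$, uniformly in $a$.

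For the limit $y\to 0^+$, I would exploit the Schwartz decay of $\widetilde\Phi$, which follows from $\Phi$ being Schwartz: $|\widetilde\Phi(\xi)|\ll_N(1+|\xi|)^{-N}$ for any $N$. Whenever $c\le 1$ this yields $|S_a(y)|\ll_N c^N\zeta(2N)$ (taking any $N>1/2$), while for $c\ge 1$ the trivial bound $|S_a(y)|=O(1)$ suffices. Splitting the $a$-sum at the threshold $a\sim y^{-1/2}$ and taking $N=1$ would produce $|M_\Phi(y,\delta)|\ll\sqrt y\to 0$.

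For the limit $y\to\infty$, I would first symmetrize, writing $2S_a(y)+\widetilde\Phi(0)=T(c)$ with $T(c)\coloneqq\sum_{m\in\mathbb Z}(-1)^m\widetilde\Phi(m^2/c)$, and then apply Poisson summation with the twist $(-1)^m=e^{i\pi m}$ to the Schwartz function $h(u)=\widetilde\Phi(u^2/c)$, obtaining $T(c)=\sum_{j\text{ odd}}\hat h(j/2)$. Inserting the definition of $\widetilde\Phi$ and evaluating the resulting inner Fresnel integral in $u$ (via the classical identity $\int e^{\pm 2\pi i\alpha u^2-2\pi i\xi u}du=(2|\alpha|)^{-1/2}e^{\pm i(\pi/4)\mathrm{sgn}(\alpha)}e^{\mp i\pi\xi^2/(2\alpha)}$) would produce, for $\Phi$ supported in $\mathbb R_{>0}$,
\[ \hat h(\xi)=\sqrt c\int\Phi(x)\cos\!\bigl(\pi\xi^2 c/(2x)\bigr)\,x^{-1/2}\,dx, \]
and an analogous $\sin$-variant if $\mathrm{supp}(\Phi)\subset\mathbb R_{<0}$. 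As $c\to\infty$ the phase is non-stationary on $\mathrm{supp}(\Phi)$, so iterated integration by parts in $x$ gives $\hat h(j/2)\ll_N(j^2c)^{-N}$ for any $N$; choosing $N=2$ and summing over odd $j$ yields $T(c)\ll c^{-3/2}$, whence $S_a(y)=-\widetilde\Phi(0)/2+O((a^2y)^{-3/2})$. Substituting into the outer sum, and using $\sum_{(a,2)=1}\mu(a)/a^2=(1-1/4)/\zeta(2)^{-1}\cdot\ldots=8/\pi^2$, the main term gives $\tfrac12\cdot\tfrac{8}{\pi^2}\cdot(-\widetilde\Phi(0)/2)=-2\widetilde\Phi(0)/\pi^2$, while the error contributes $O(y^{-3/2})$.

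The principal technical step will be the Fresnel evaluation of $\hat h$ combined with the integration-by-parts bound ensuring uniform decay in $j$; the parity of $\mathrm{supp}(\Phi)$ affects only the $\cos$ versus $\sin$ shape of $\hat h$, not the final conclusion, since both variants produce the same decay rate and limiting value.
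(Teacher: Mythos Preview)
Your proposal is correct, and for the limit $y\to 0^+$ it is genuinely more elementary than the paper's argument. The paper first passes to the Poisson-summed representation~\eqref{eq.poission_result}, rewrites $M_\Phi(y,\delta)=-\tfrac{2}{\pi^2}\widetilde\Phi(0)+\sqrt{y/2}\sum_n b_n\widehat H(n\sqrt{y/2})$ with $b_n$ a multiplicative divisor sum, identifies the Dirichlet series $\sum b_n n^{-s}$ with $\tfrac{1-2^{-s}}{1-2^{-s-1}}\,\zeta(s)/\zeta(s+1)$, and then applies Mellin inversion and the residue at $s=1$ (together with RH-based vertical-line bounds on $\zeta$) to show that the second term tends to $\tfrac{2}{\pi^2}\widetilde\Phi(0)$ and cancels the first. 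Your route---direct Schwartz bounds on $S_a(y)$ for $c\le 1$, a uniform $O(1)$ bound for $c\ge 1$, and splitting the $a$-sum at $a\asymp y^{-1/2}$---bypasses all of this machinery. Two remarks: the ``trivial'' $O(1)$ bound for $c\ge 1$ is not literally trivial (the absolute sum is $\asymp\sqrt c$) and requires the alternating sign, e.g.\ via Abel summation and $\sum_m|g(m{+}1)-g(m)|\ll\int_0^\infty|g'|=O(1)$ with $g(t)=\widetilde\Phi(t^2/c)$. For $y\to\infty$ both arguments apply Poisson summation to $T(c)$; the paper simply observes that $H(w)=\widetilde\Phi(w^2)$ is Schwartz, hence $\widehat H$ is Schwartz, and invokes Lemma~\ref{lem:decay}, with no case analysis on $\mathrm{supp}(\Phi)$. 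Your Fresnel/non-stationary-phase computation recovers this when $\mathrm{supp}(\Phi)$ is bounded away from $0$ but is unnecessary, and the bound you state for $\widehat h(j/2)$ omits the prefactor $\sqrt c$ coming from $\widehat h(\xi)=\sqrt c\,\widehat H(\sqrt c\,\xi)$ (your conclusion $T(c)\ll c^{-3/2}$ is nonetheless correct once this factor is restored and $N=2$ is chosen).
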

We present a proof of Corollary \ref{c:1ld} in Section~\ref{s:1ld}, in which we use the same techniques as Rubinstein and Sarnak.  
A similar phenomenon for real character sums was previously observed in \cite{CFS}, which studied the asymptotics for double sums of the form
\[\sum_{\substack{m\leq X\\m\text{ odd}}}\sum_{\substack{n\leq Y\\n\text{ odd}}}\left(\frac{m}{n}\right).\]
In \cite{CFS}, the authors study the case that $X\sim Y$, which yields a function exhibiting murmuration-like properties, including scale-invariance and non-differentiablity.
The analysis presented in \cite{CFS} is different from that presented here. 
 
The proof of Theorem~\ref{thm.main-real} involves identities for the M\"obius function, the Polya--Vinogradov inequality for sums over primes, and Poisson summation as in \cite[Lemma~2.6]{S00}.
The transform in equation~\eqref{eq.Stransform} was used in \cite[Section~2.4]{S00}.
Unfolding this transform and applying the identity $\cos(x)+\sin(x) = \sqrt{2}\cos(x-\pi/4)$ to equation~\eqref{eq.Stransform} we conclude that
\begin{align}\label{eq.unfolding}
\begin{split}
    \lim_{X \to \infty} M_{\Phi}(y,X,\delta) &= \frac{1}{2}\sum_{\substack{a =1 \\ (a, 2) = 1}}^\infty \frac{\mu(a)}{a^2} \sum_{m =1}^\infty  (-1)^m \int_{-\infty}^\infty \left(\cos\left( \frac{\pi m^2x}{a^2y} \right)+ \sin\left(\frac{\pi m^2 x}{a^2y} 
    \right)\right)\Phi(x)dx \\ &=\int_{-\infty}^\infty\Phi(x)\left(\frac{\sqrt{2}}{2} \sum_{\substack{a =1 \\ (a, 2) = 1}}^\infty   \frac{\mu(a)}{a^2}\sum_{m =1}^\infty  (-1)^m \cos \left(\frac{\pi m^2 x}{a^2y} - \frac{\pi}{4} \right)\right)dx.
\end{split}    
\end{align}
In other words, conditional on the Generalized Riemann hypothesis, we exhibit a distribution $M$ such that, for every smooth Schwartz $\Phi\geq0$ with compact support and every $\delta\in(\frac34,1)$, we have
\begin{equation}\label{eq.ZubDensity}
\lim_{X\rightarrow\infty}M_{\Phi}(y,X,\delta)=\int_{-\infty}^{\infty}\Phi(t)M(y/t)dt.
\end{equation}
Consequently, using Sarnak's terminology, the distribution $M$ in equation~\eqref{eq.ZubDensity} is the \textsl{Zubrilina density} for the family $\left\{\left(\frac{8d}{\cdot}\right):d\in\mathcal{G}\right\}$ \cite{S23ii}.
Using the same techniques, we calculate the Zubrilina density for $\left\{\left(\frac{d}{\cdot}\right):d\in\mathcal{G}\right\}$ in Section~\ref{sec-zd} and deduce the analogue of Corollary~\ref{c:1ld}.
Given Figure \ref{fig:sharp}, it is not immediately clear whether or not $M_{\Phi}(y,X,\delta)$ exhibits infinitely many sign changes near $y=0$.
Zubrilina has shown that, in the setting of modular forms, the analogous function has only finitely many sign changes.

\medskip

We conclude this introduction with a summary of the sequel. 
Section~\ref{s:background} contains the relevant background material on Dirichlet characters.
In Section~\ref{s:prime}, we prove Theorem~\ref{thm.main}. 
In Section~\ref{s:realp}, we prove Theorem~\ref{thm.main-real}.
In Section~\ref{s:1ld}, we prove Corollary~\ref{c:1ld}. 
In Section~\ref{s:generalconductors}, we state and prove the aforementioned variations on Theorems \ref{thm.main} and~\ref{thm.main-real} (both of which concern averaging over an alternative set of conductors).

\begin{figure}[h]
\centering
\includegraphics[width=0.7\textwidth]{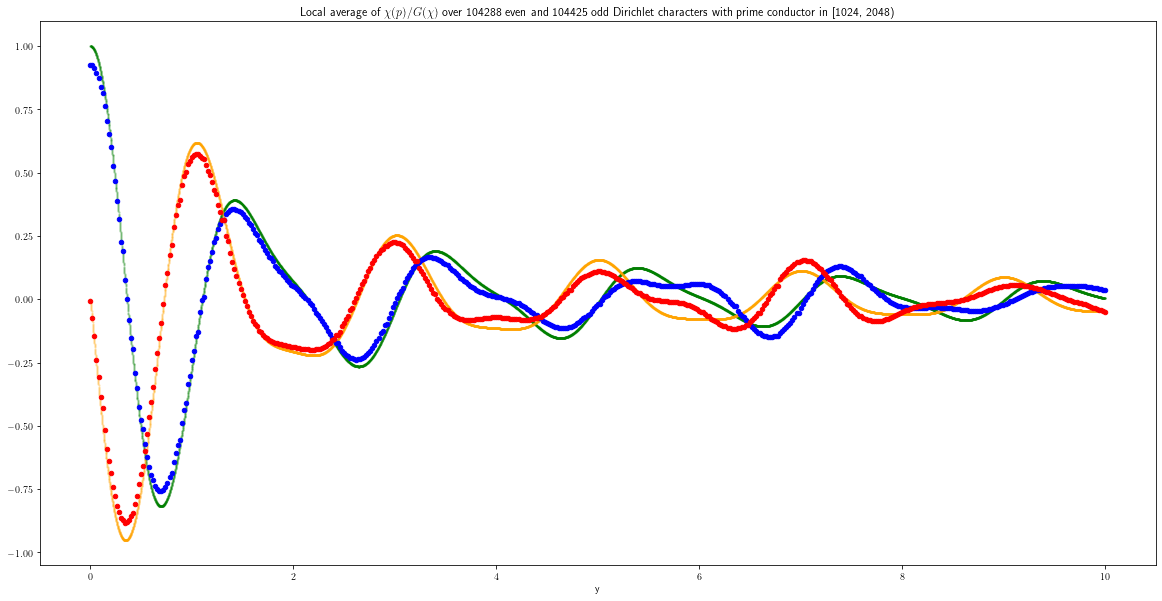}
\includegraphics[width=0.7\textwidth]{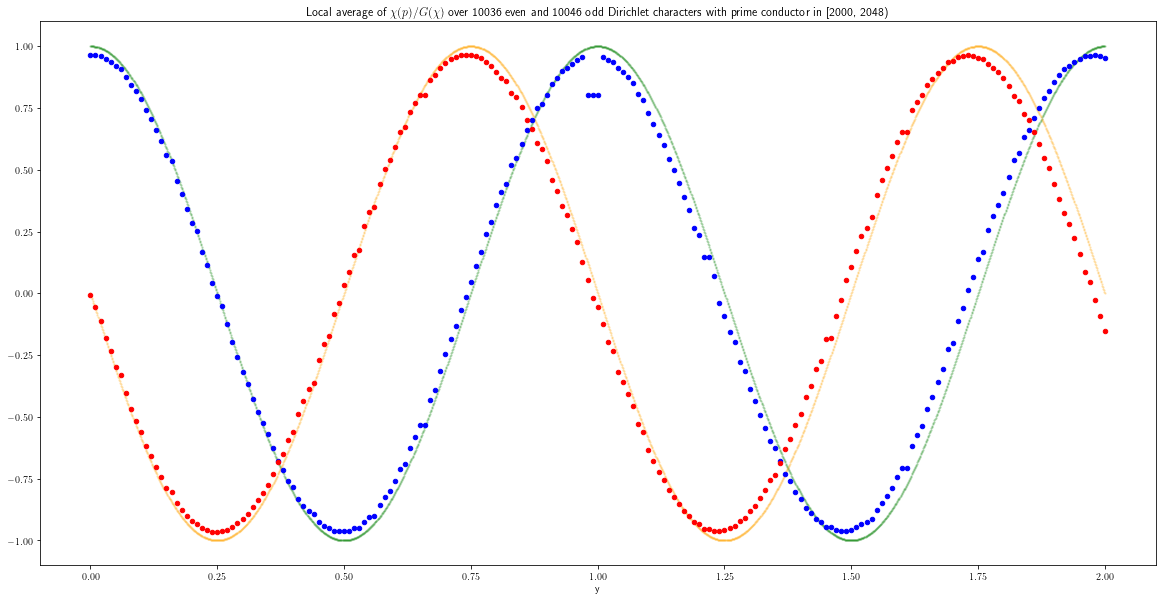}
\caption{\sf (Top) $P_\pm(y, 2^{10}, 2)$ for $y \in [0,10]$ with $+$ in blue and (the imaginary part of) $-$ in red. (Bottom) $\widetilde{P}_\pm(y, 2002, 0.51)$ for $y \in [0,2]$ with $+$ in blue and (the imaginary part of) $-$ in red. The solid curves (in yellow and green) represent the limits given by Theorem~\ref{thm.main}. The discontinuity around $y=1$ will be explained in Remark~\ref{rem.discont}. }
    \label{fig:prime_lavg}
\end{figure}

\begin{figure}[h]
\centering
\includegraphics[width=0.8\textwidth]{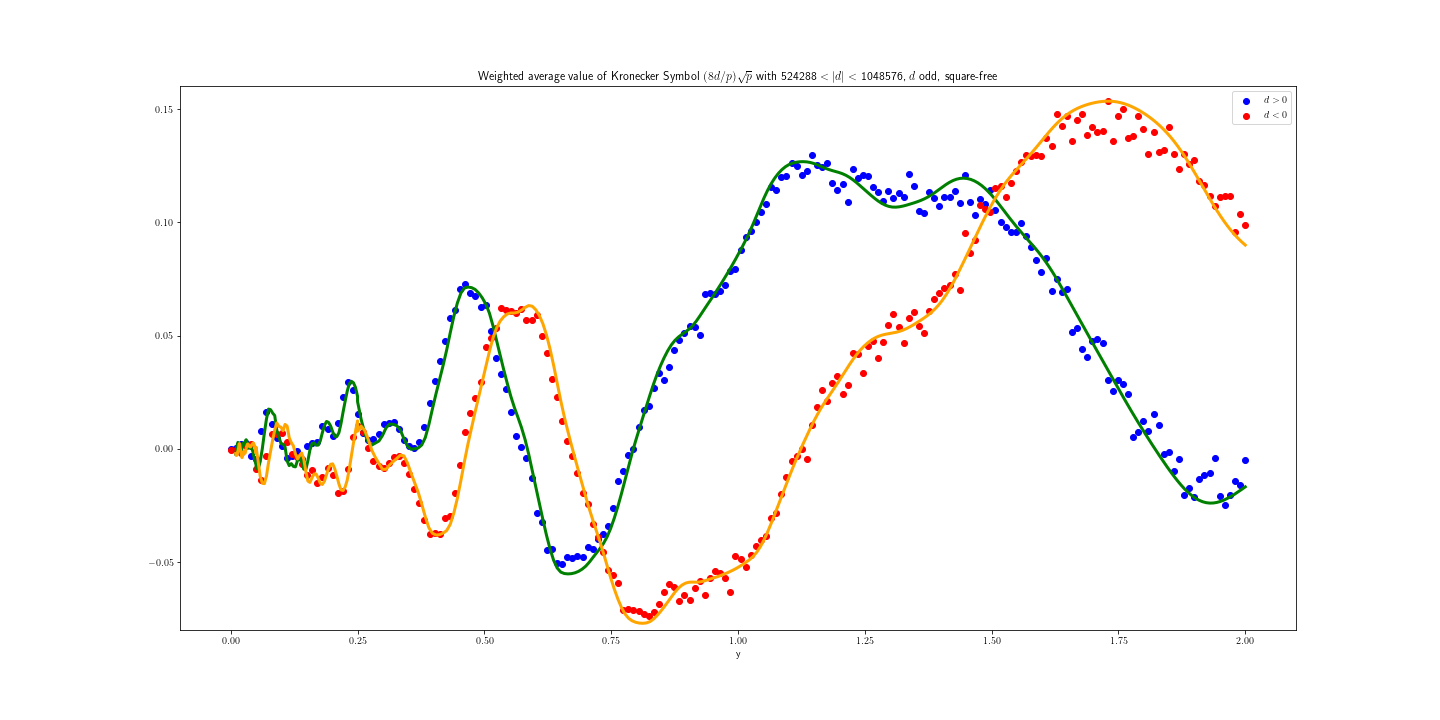}
\caption{\sf Let \[\Phi_+(x) =  \mathbbm{1}_{(1,2)}(x)\exp  \left(\tfrac{-1}{1-4(x-1.5)^2}  \right), \] \[ \Phi_-(x) =  \mathbbm{1}_{(-2,-1)}(x)\exp \left(\tfrac{-1}{1-4(-x-1.5)^2} \right).\]
We plot $M_{\Phi_\pm}(y, 2^{19}, \frac23)$ for $y \in [0,2]$ with $+$ in blue (resp. $-$ in red).
We also plot the right hand side of equation~\eqref{eq.limreal} in green (resp. orange). 
}
\label{fig:quad_murm}
\end{figure}

\begin{figure}[h]
\centering
\includegraphics[width=0.8\textwidth]{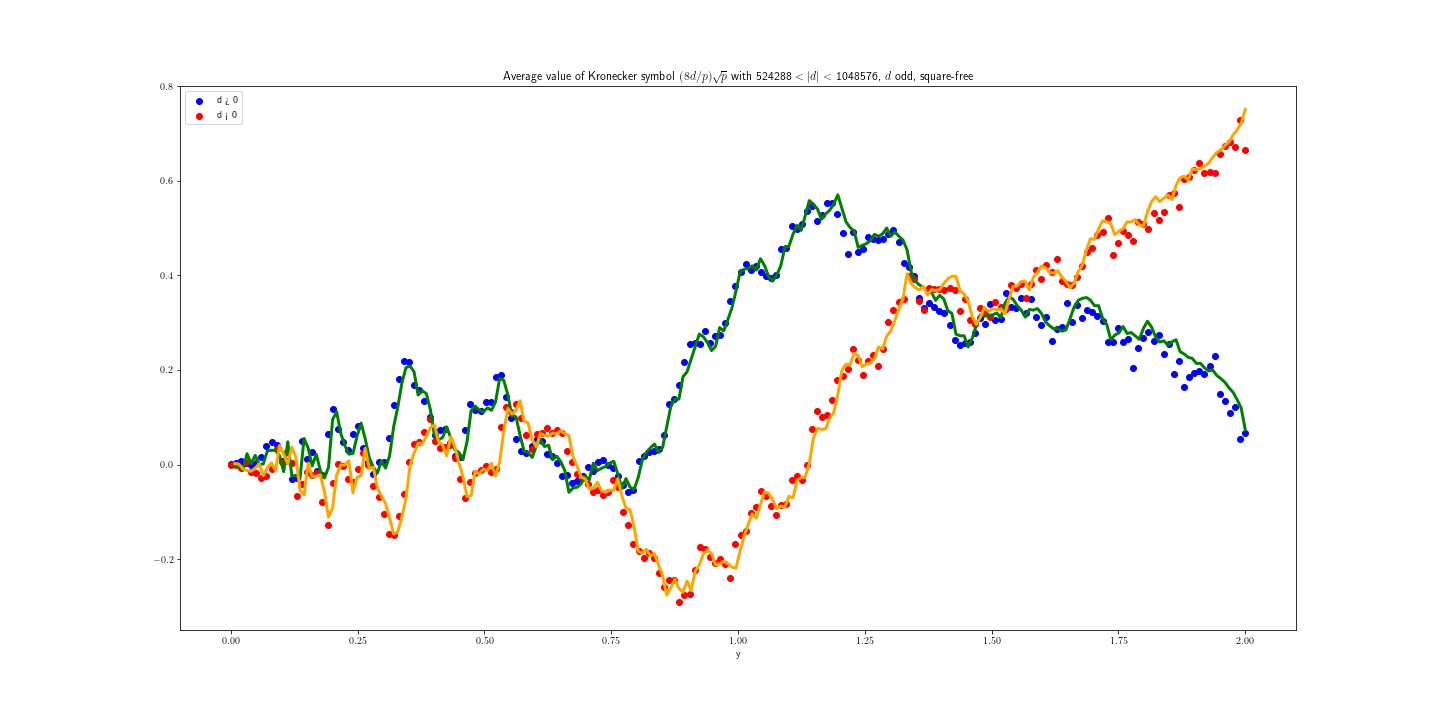}
\caption{\sf 
Let $\Phi_+(x) =  \mathbbm{1}_{(1,2)}(x)$ and $\Phi_-(x) =  \mathbbm{1}_{(-2,-1)}(x)$. We plot $M_{\Phi_+}(y, 2^{19}, \frac23)$ (resp. $M_{\Phi_-}(y, 2^{19}, \frac23)$) in blue (resp. red). We also plot the right hand side of equation~\eqref{eq.limreal} in green (resp. orange). }
\label{fig:sharp}
\end{figure}

\subsection*{Acknowledgements}
The authors are grateful to Yang-Hui He and Andrew Sutherland for preliminary conversations connected to the themes of this paper, to Kumar Murty for helpful comments on an early draft, to Peter Sarnak for suggesting the use of Poisson summation and several insightful discussions, to Philip Holdridge for their useful comments on an earlier draft, to Kannan Soundararajan for drawing the authors' attention to \cite{CFS}, and to the anonymous referees for recommending that we expand an earlier draft to include the family of real Dirichlet characters and for facilitating several improvements with their careful reading and detailed comments.

\section{Background}\label{s:background}
\subsection{Asymptotics of double averages}
For $m\in\mathbb{Z}_{>0}$, a Dirichlet character mod $m$ is a completely multiplicative function $\chi:\mathbb{Z}\rightarrow\mathbb{C}$ which is periodic with period $m$ and satisfies $\chi(a)=0$ if and only if $\mathrm{gcd}(a,m)>1$. 
The Gauss sum of a Dirichlet character $\chi$ mod $m$ is defined by
\[\tau(\chi)=\sum_{b=1}^m\chi(b)e^{2\pi ib/m}.\]
We denote by $\chi_0$ the principal Dirichlet character mod $m$, which satisfies $\chi_0(a)=1$ for $(a,m)=1$ by definition.
We say that a Dirichlet character $\chi$ is even (resp. odd) if $\chi(-1)=1$ (resp. $\chi(-1)=-1$).
The conductor of a Dirichlet character $\chi$ is the minimal positive integer $N$ such that $\chi$ is a Dirichlet character mod $N$.
We say that a Dirichlet character $\chi$ is primitive if its modulus and conductor are equal. 
We let $\mathcal{D}_{+}(N)$ (resp. $\mathcal{D}_-(N)$) denote the set of primitive even (resp. odd) Dirichlet characters mod $N$. 
A Dirichlet character is said to be quadratic if its values are real.
We denote by $\mathcal{Q}_\pm(N)$ the subset of $\mathcal{D}_{\pm}(N)$ consisting of quadratic characters. 
Note that, for even (resp. odd) characters $\chi\in \mathcal{Q}_\pm(N)$, we have $\tau(\chi)= \sqrt N$ (resp. $i\sqrt N$).

\begin{example}\label{ex.quadratic}
Quadratic characters provide the simplest analogue to the murmurations of elliptic curves over $\mathbb{Q}$ discovered in \cite{HLOP}.
Furthermore, using quadratic reciprocity, one may relate sums of quadratic Dirichlet characters to Chebyshev's bias (cf. \cite{RS}).
In Figure~\ref{fig:idft_real_dyadic_sum}, we plot the sum of $\chi(p)/\tau(\chi)$ over $\bigcup_{N=X}^{2X-1} \mathcal{Q}_\pm(N)$ for $X=2^{17}$. 
\end{example}
\begin{figure}[h]
\centering
\includegraphics[width=0.7\textwidth]{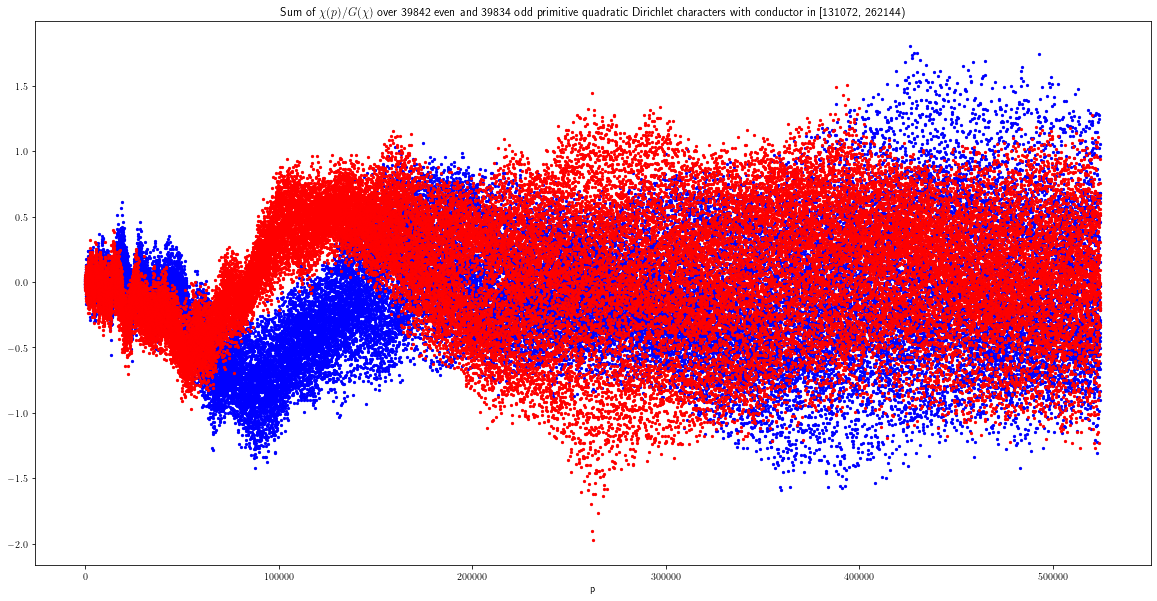}
\caption{\sf Plot of $\sum_{N \in [X, 2X)}\sum_{\chi \in \mathcal{Q}_\pm(N)} \chi(p)/\tau(\chi)$, for $X = 2^{17}$ and $2 \leq p < 4X$ with $+$ in blue and (the imaginary part of) $-$ in red.}\label{fig:idft_real_dyadic_sum}
\end{figure}

In this paper, we consider two variations of the sum considered in Figure~\ref{fig:idft_real_dyadic_sum}.
The first, and simplest, variation is to involve (Galois orbits of) complex characters in the average.
The second, more challenging, variation is to work only with real characters but to incorporate a smooth weight function with compact support and to take the average over the primes in  a short interval.  

\begin{example}\label{ex.orbits}
In Figure~\ref{fig:idft_dyadic_avg}, we plot the sum of $\chi(p)/\tau(\chi)$ over $\chi\in\bigcup_{N=X}^{2X-1} \mathcal{D}_\pm(N)$, for $X=2^{10}$,
normalized by (cf. \cite{J73}):
\begin{equation}\label{eq.chipi}
\frac{1}{X}\sim\frac{3\sqrt{3}}{\pi^2\sqrt{\# \mathcal D_{\pm}(X)}} 
\end{equation}
We note that including the non-real characters and normalizing in this way yields a much less noisy image than in Figure~\ref{fig:idft_real_dyadic_sum}.
We will observe a similar effect with modular forms in a forthcoming work \cite{HLOPS}.
\end{example}

\begin{figure}[h]
\includegraphics[width=0.6\textwidth]{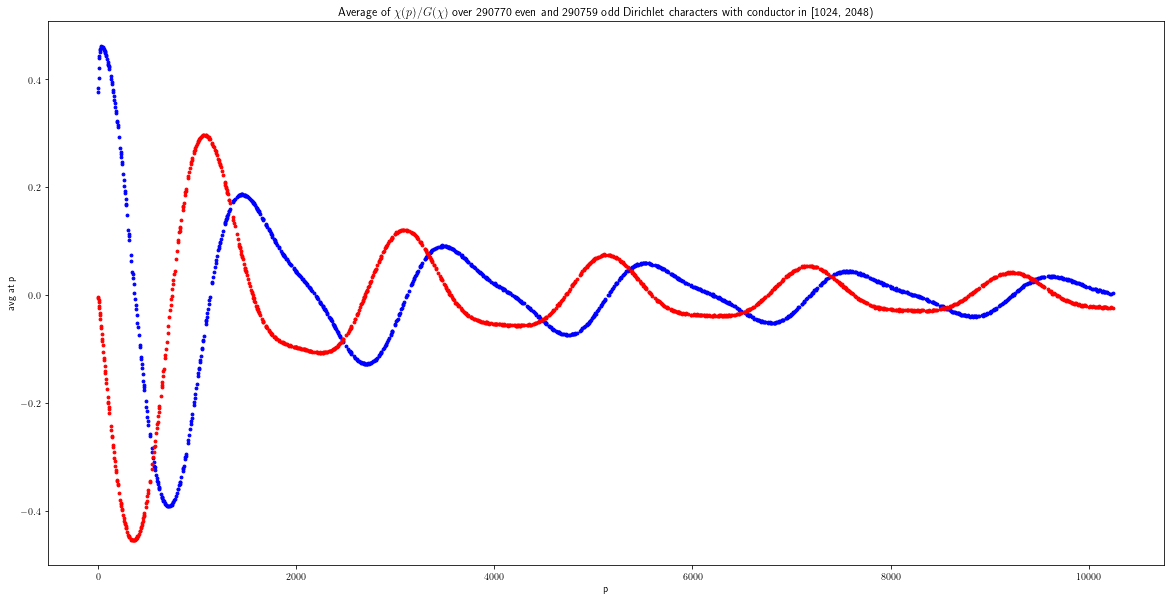}
\caption{\sf Plot of $\frac{1}{X}\sum_{N \in [X, 2X)}\sum_{\chi \in \mathcal{D}_\pm(N)} \chi(p)/\tau(\chi)$ for $X = 2^{10}$ for primes $p$ such that $2 \leq p \leq 10 X$, with $+$ in blue and (the imaginary part of) $-$ in red. }
\label{fig:idft_dyadic_avg}
\end{figure}

\begin{example}\label{ex.tpaverage}
The function $\widetilde{P}_{\pm}(y,X,\delta)$ in equation~\eqref{eq.Pt} comes from the following average:
\begin{equation}\label{eq.exp}
\frac{\sum_{\substack{N\in[X,X+X^{\delta}]\\N\text{ prime}}}\sum_{\chi\in\mathcal{D}_{\pm}(N)}\chi(\lceil yX\rceil^{\mathfrak{p}})/ \tau(\chi)}{\sum_{\substack{N\in[X,X+X^{\delta}]\\N\text{ prime}}}\, 1},
\end{equation}
where $y\in\mathbb{R}_{>0}$ and $\delta\in(\frac12,1)$.
Indeed, assuming the Riemann hypothesis and applying the prime number theorem, we deduce 
\begin{equation}
\lim_{X\rightarrow\infty}\left[\#\left\{N\in[X,X+X^{\delta}]:N~\text{prime}\right\} \cdot \frac{\log(X)}{X^{\delta}}\right]=1.
\end{equation}
It follows that the function in equation~\eqref{eq.exp} is asymptotic to $\widetilde{P}_{\pm}(y,X,\delta)$.
There is a similar interpretation for the equation $P_{\pm}(y,X,c)$ in equation~\eqref{eq.P}.
\end{example}

\begin{example}\label{ex.doubleaverage}
The function $M_{\Phi}(y,X,\delta)$ in equation~\eqref{eq.Mp-1} comes from the following double average:
\begin{equation}\label{eq.doubleexp}
D_{\Phi}(y,X,\delta)=\frac{\sum_{\substack{p\in[yX,yX+X^{\delta}] \\ p \text{ prime}}}\frac{\sum_{d\in\mathcal{G}}\Phi(d/X)\chi_{8d}(p)\sqrt{p}}{\sum_{d\in\mathcal{G}}\Phi(d/X)}}{\sum_{\substack{p\in[yX,yX+X^{\delta}] \\ p \text{ prime}}}\, 1},
\end{equation}
where $y\in\mathbb{R}_{>0}$, $\delta\in(\frac34,1)$, $\Phi$ is a smooth function of compact support, $\mathcal{G}$ denotes the set of odd squarefree integers, and $\chi_d$ denotes the Kronecker symbol $\left(\frac{d}{\cdot}\right)$.
Assuming the Riemann hypothesis and applying the prime number theorem, we deduce 
\begin{equation}\label{eq.PNT}
\lim_{X\rightarrow\infty}\left[\#\left\{p\in[yX,yX+X^{\delta}]:p~\text{prime}\right\} \cdot \frac{\log(X)}{X^{\delta}}\right]=1.
\end{equation}
It follows that $D_{\Phi}(y,X,\delta)$ is asymptotic to
\begin{equation}\label{eq.pntde}
\frac{\log X}{X^{\delta}}\sum_{\substack{p\in[yX,yX+X^{\delta}] \\ p \text{ prime}}}\frac{\sum_{d\in\mathcal{G}}\Phi(d/X)\chi_{8d}(p)\sqrt{p}}{\sum_{d\in\mathcal{G}}\Phi(d/X)}.
\end{equation}
To simplify the denominator in equation~\eqref{eq.pntde}, we note that the natural density of $\mathcal{G}$ is shown to be $4/\pi^2$ in \cite{J10}. Using the fact that $\Phi$ is Schwartz, an equidistribution argument for $(d/X)_{d \in \mathcal{G}}$ implies that
\begin{equation}\label{eq.si}
\lim_{X \to \infty} \frac 1 X \sum_{d\in\mathcal{G}}\Phi(d/X)= \frac{4}{\pi^2} ~\int_{-\infty}^{\infty}\Phi(\tau)d\tau < \infty.
\end{equation} 
Therefore, to understand asymptotics of $D_{\Phi}(y,X,\delta)$, it suffices to analyse the limit of $M_{\Phi}(y,X,\delta)$.
\end{example}

\subsection{Lemmas for Theorem~\ref{thm.main}}\label{s:background-1.1}
We begin with the following Lemma on Gauss sums.
\begin{lemma} \label{lem.cosi}
Let $N$ be a positive integer.
If $p$ is a prime such that $(p,N)=1$, then 
\begin{align}
\cos \left(\frac{2 \pi p}N \right) &= \frac{-1}{\phi(N)}   + \frac{1}{\phi(N)} \sum_{\substack{\chi \bmod N \\ \ \chi \neq \chi_0, \, \chi(-1) = 1}} \tau(\overline{\chi}) \chi(p), \label{eq.cos-1}\\
\sin\left(\frac{2 \pi p}{N} \right) &= \frac{-i}{\phi(N)} \sum_{\substack{\chi \bmod N \\ \chi(-1) = -1}} \tau(\overline{\chi}) \chi(p).\label{eq.sin-1}
\end{align}
\end{lemma}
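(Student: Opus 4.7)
The plan is to derive both identities from the single Fourier-inversion relation
\[ e^{2\pi i p/N} = \frac{1}{\phi(N)} \sum_{\chi \bmod N} \tau(\overline{\chi})\,\chi(p), \qquad (p, N) = 1. \]
I would establish this by expanding $\tau(\overline\chi) = \sum_{b=1}^N \overline\chi(b)\, e^{2\pi i b/N}$, interchanging the order of summation, and invoking the orthogonality relation $\sum_{\chi \bmod N} \chi(p)\overline{\chi}(b) = \phi(N)\,\mathbbm{1}_{b \equiv p \pmod{N}}$ (which holds since $(p,N)=1$; terms with $(b,N)>1$ contribute zero because then every $\overline\chi(b)$ vanishes).

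Next, since $(-p,N)=1$ as well, the same formula applies with $p$ replaced by $-p$; using $\chi(-p) = \chi(-1)\chi(p)$ together with $2\cos\theta = e^{i\theta} + e^{-i\theta}$ and $2i\sin\theta = e^{i\theta} - e^{-i\theta}$, I obtain
\[ \cos(2\pi p/N) = \frac{1}{2\phi(N)} \sum_{\chi \bmod N} \tau(\overline\chi)\,\chi(p)\bigl(1 + \chi(-1)\bigr), \]
\[ \sin(2\pi p/N) = \frac{1}{2i\,\phi(N)} \sum_{\chi \bmod N} \tau(\overline\chi)\,\chi(p)\bigl(1 - \chi(-1)\bigr). \]
The factor $1+\chi(-1)$ equals $2$ on even characters and $0$ on odd characters, while $1-\chi(-1)$ behaves dually; after rewriting $1/i = -i$ in the second line, equation~\eqref{eq.sin-1} follows at once.

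For~\eqref{eq.cos-1} the remaining step is to split off the principal character $\chi_0$. A short M\"obius-inversion computation gives
\[ \tau(\chi_0) = \sum_{\substack{b = 1 \\ (b,N) = 1}}^N e^{2\pi i b/N} = \mu(N), \]
which equals $-1$ in the case at hand ($N$ prime, the setting in which the lemma is applied in Theorem~\ref{thm.main}). Combined with $\chi_0(p) = 1$ and the factor $1 + \chi_0(-1) = 2$, the contribution of $\chi_0$ is exactly $-1/\phi(N)$, producing the isolated term on the right-hand side of~\eqref{eq.cos-1}.

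The only delicate point is the evaluation of $\tau(\chi_0)$; once that is in hand, the rest is pure orthogonality together with the even/odd dichotomy encoded by $1 \pm \chi(-1)$, so I do not anticipate a substantive obstacle.
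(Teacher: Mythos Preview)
Your argument is correct and is precisely the content of the paper's cited reference \cite[(3.11)]{IK}, namely $e^{2\pi i p/N} = \frac{1}{\phi(N)}\sum_{\chi \bmod N}\tau(\overline\chi)\chi(p)$ for $(p,N)=1$; the even/odd split via $1\pm\chi(-1)$ is the natural route, and \eqref{eq.sin-1} follows exactly as you say.

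You have in fact caught a small inaccuracy in the stated lemma. The principal-character contribution to the even sum is $\tau(\chi_0)/\phi(N)$ with $\tau(\chi_0)=c_N(1)=\mu(N)$, so for a \emph{general} positive integer $N$ the constant in \eqref{eq.cos-1} ought to be $\mu(N)/\phi(N)$ rather than $-1/\phi(N)$; for instance at $N=4$ the stated formula gives $-1/2$, whereas $\cos(\pi p/2)=0$ for every odd prime $p$. Your restriction to $N$ prime is therefore necessary for the lemma \emph{as written}, not merely convenient. The paper is unharmed by this: Lemma~\ref{lem.sCGcossin} uses only prime $N$, and in Section~\ref{s:generalconductors} the constant enters \eqref{eq.imprim-cos-2} as a term of size $O(1/N)$, which vanishes in the limit via \eqref{eq.1/N} regardless of whether it reads $1/N$ or $\mu(N)/N$.
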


\begin{proof}
This follows from \cite[(3.11)]{IK}.
\end{proof}

If $N$ is prime, then every non-trivial Dirichlet character mod $N$ is primitive  and hence
\begin{equation}\label{eq.Dpm}
\mathcal{D}_{+}(N) = \{\chi~\mathrm{mod}~N~:~\chi\neq\chi_0,\, \chi(-1) = 1 \},
\ \ 
\mathcal{D}_{-}(N) = \{\chi~\mathrm{mod}~N~:~ \chi(-1)=-1 \} \ \ (N~\text{prime}).
\end{equation}

\begin{lemma}\label{lem.sCGcossin}
If $p$ and $N$ are two distinct primes, then
\begin{align}
\sum_{\chi \in \mathcal{D}_+(N)} \frac{\chi(p)}{\tau(\chi)} &= \left(\frac{N-1}{N} \right)\cos\left(\frac{2\pi p}{N}\right)+\frac{1}{N},\label{eq.mcos}\\
\sum_{\chi \in \mathcal{D}_-(N)} \frac{\chi(p)}{\tau(\chi)} &= -i\left(\frac{N-1}{N}\right)\sin\left(\frac{2 \pi p}{N}\right).\label{eq.msin}
\end{align}
\end{lemma}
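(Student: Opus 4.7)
The plan is to derive both identities directly from Lemma~\ref{lem.cosi} by inverting the appearance of $\tau(\overline{\chi})$ in favor of $1/\tau(\chi)$. The two key ingredients are: (i) since $N$ is prime, every non-trivial Dirichlet character mod $N$ is primitive, so equation~\eqref{eq.Dpm} applies and $\phi(N)=N-1$; and (ii) for any primitive character $\chi$ mod $N$, one has $|\tau(\chi)|^2 = N$ together with $\tau(\overline{\chi})=\chi(-1)\overline{\tau(\chi)}$, which combine to give the crucial relation
\[
\tau(\overline{\chi}) \;=\; \frac{\chi(-1)\,N}{\tau(\chi)}.
\]
Thus on the even characters $\tau(\overline{\chi})=N/\tau(\chi)$, and on the odd characters $\tau(\overline{\chi})=-N/\tau(\chi)$.

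First, for the even case, I substitute $\tau(\overline{\chi})=N/\tau(\chi)$ into equation~\eqref{eq.cos-1}, using the identification~\eqref{eq.Dpm} of the index set with $\mathcal{D}_+(N)$, to rewrite
\[
\cos\!\left(\frac{2\pi p}{N}\right) \;=\; -\frac{1}{N-1} \;+\; \frac{N}{N-1}\sum_{\chi\in\mathcal{D}_+(N)}\frac{\chi(p)}{\tau(\chi)}.
\]
Solving this linear equation for the character sum yields equation~\eqref{eq.mcos} after multiplying through by $(N-1)/N$.

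Second, for the odd case, I substitute $\tau(\overline{\chi})=-N/\tau(\chi)$ into equation~\eqref{eq.sin-1}, noting that the index set $\{\chi\bmod N : \chi(-1)=-1\}$ coincides with $\mathcal{D}_-(N)$ when $N$ is prime, to get
\[
\sin\!\left(\frac{2\pi p}{N}\right) \;=\; \frac{iN}{N-1}\sum_{\chi\in\mathcal{D}_-(N)}\frac{\chi(p)}{\tau(\chi)}.
\]
Dividing by $iN/(N-1)$ and simplifying $1/i = -i$ produces equation~\eqref{eq.msin}.

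There is essentially no obstacle here; the lemma is a purely algebraic rearrangement of Lemma~\ref{lem.cosi}. The only point requiring minor care is the bookkeeping: one must verify that the hypothesis $p \neq N$ (both prime) ensures $(p,N)=1$ so that Lemma~\ref{lem.cosi} is applicable, and that primality of $N$ is exactly what is needed to identify the character sets in equation~\eqref{eq.Dpm} and to apply the identity $\tau(\overline{\chi})=\chi(-1)N/\tau(\chi)$ uniformly to every non-principal character in each sum.
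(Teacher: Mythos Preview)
Your proof is correct and follows essentially the same approach as the paper: both arguments combine Lemma~\ref{lem.cosi} with the identity $\tau(\overline{\chi})=\chi(-1)N/\tau(\chi)$ (the paper states this as equation~\eqref{eq.1/G}) and use primality of $N$ to invoke~\eqref{eq.Dpm} and $\phi(N)=N-1$. The only cosmetic difference is that the paper first rewrites $\sum_{\chi}\chi(p)/\tau(\chi)$ as $\frac{\epsilon}{N}\sum_{\chi}\tau(\overline{\chi})\chi(p)$ and then applies Lemma~\ref{lem.cosi}, whereas you substitute the identity directly into~\eqref{eq.cos-1} and~\eqref{eq.sin-1} and solve for the sum; the algebra is identical.
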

\begin{proof}
For $\chi \in \mathcal{D}_\pm(N)$, recall 
\begin{equation}\label{eq.1/G}
\frac 1{\tau(\chi)} = \frac {\chi(-1)} N \tau(\overline{\chi}) ,
\end{equation}
(see, for example, \cite[Exercise 1.1.1]{Bump}). 
Since $\epsilon \seteq \chi(-1)$ is constant on $\chi \in \mathcal{D}_\pm(N)$, summing equation~\eqref{eq.1/G} over $\chi\in\mathcal{D}_{\pm}(N)$ yields
\begin{align}\label{eq.murm-1Gbarchi}
\sum_{\chi \in \mathcal{D}_\pm(N)} \frac{\chi(p)}{\tau(\chi)} = 
\frac{\epsilon}{N} \sum_{\chi \in \mathcal{D}_\pm(N)} \tau(\overline{\chi})\chi(p).
\end{align}
Since $N$ is prime and $(p, N)=1$, Lemma \ref{lem.cosi} implies
\begin{align}
\cos\left(\frac{2 \pi p}{N} \right) &=  \frac{-1}{N-1} + \frac{1}{N-1} \sum_{\substack{\chi \bmod N \\ \chi \neq \chi_0,  \chi(-1) = 1}} \tau(\overline{\chi}) \chi(p), \label{eq.cos}\\
\sin\left(\frac{2 \pi p}{N} \right) &= \frac{-i}{N-1} \sum_{\substack{\chi \bmod N \\ \chi(-1) = -1}} \tau(\overline{\chi}) \chi(p).\label{eq.sin}
\end{align}
The result now follows from equations~\eqref{eq.Dpm},~\eqref{eq.murm-1Gbarchi},~\eqref{eq.cos} and ~\eqref{eq.sin}.
\end{proof}

\begin{lemma} For $a \in \mathbb{R}_{>0}$ and $b \in (0,1]$, we have
\begin{equation} \label{eq.1/N}
    \lim_{X \to \infty} \frac{\log X}{X^b} \sum_{\substack{N \in [X, X+aX^b]}} \frac{1}{N} = 0.
\end{equation}
\end{lemma}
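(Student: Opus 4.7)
The plan is an elementary size bound followed by counting the terms. First I would observe that, for every $N\in[X,X+aX^b]$, one has $N\geq X$ and therefore $\frac{1}{N}\leq \frac{1}{X}$. Second I would count: the number of integers in the interval $[X,X+aX^b]$ is at most $aX^b+1$. Combining these two observations yields
\begin{equation*}
\sum_{N\in[X,X+aX^b]}\frac{1}{N}\leq \frac{aX^b+1}{X}=aX^{b-1}+\frac{1}{X}.
\end{equation*}

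Multiplying by the prefactor $\frac{\log X}{X^b}$ gives
\begin{equation*}
\frac{\log X}{X^b}\sum_{N\in[X,X+aX^b]}\frac{1}{N}\leq \frac{a\log X}{X}+\frac{\log X}{X^{1+b}}.
\end{equation*}
Since $b\in(0,1]$, both $\frac{a\log X}{X}$ and $\frac{\log X}{X^{1+b}}$ tend to $0$ as $X\to\infty$, and the conclusion follows. (One could sharpen the main term by comparing the sum to $\log(1+aX^{b-1})$ via integral comparison, but this is unnecessary for the stated limit.)

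There is no real obstacle here; the lemma is essentially a trivial consequence of the fact that the number of terms is $O(X^b)$ while each term is $O(1/X)$, which together beat the normalizing factor $\frac{\log X}{X^b}$ by a factor of $\log X / X$. The only point requiring a modicum of care is ensuring $b\leq 1$ so that the factor $X^b/X^b=1$ in the estimate does not produce a divergent contribution; this is built into the hypothesis $b\in(0,1]$.
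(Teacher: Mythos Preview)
Your proof is correct. It differs slightly from the paper's argument: the paper enlarges the sum to the full range $0<N\leq(a+1)X$ (using $X+aX^b\leq(a+1)X$ when $b\leq1$) and then invokes the harmonic-sum estimate $\sum_{N\leq M}1/N=O(\log M)$, obtaining a bound of order $(\log X)^2/X^b$. You instead bound each term by $1/X$ and count the terms, obtaining the sharper bound $\log X/X$. Your route is more direct and gives a better estimate; the paper's route has the minor advantage of not needing to track the interval length explicitly. Either way the lemma is immediate.
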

\begin{proof} Since $a,b, N$ are all positive, we have
\begin{equation*}
    \lim_{X \to \infty} \frac{\log X}{X^b} \sum_{\substack{N \in [X, X+aX^b]}} \frac{1}{N} \leq \lim_{X \to \infty} \frac{\log X}{X^b} \sum_{0 < N \leq (a+1)X} \frac{1}{N} = \lim_{X \to \infty} O\left(\frac{\log X \log ((a+1)X)}{X^b} \right) = 0.
\end{equation*}
\end{proof}

\begin{lemma}
For $y\in\mathbb{R}_{>0}$, if $N \geq X$, we have
\begin{equation} \label{eq.yx} 
\lim_{X \to \infty} \frac{ \lceil yX \prceil - yX}N =0. \end{equation} 
\end{lemma}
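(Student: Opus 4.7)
The plan is to control the numerator $\lceil yX \rceil^{\mathfrak{p}} - yX$ via a prime-gap estimate, then exploit the hypothesis $N \geq X$ on the denominator. Since $y$ is a fixed positive real, $yX \to \infty$ as $X \to \infty$, so the question reduces to how far the next prime after $yX$ can be from $yX$.

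First I would invoke the prime number theorem in the following qualitative form: for every $\epsilon>0$, there exists $X_0=X_0(y,\epsilon)$ such that for all $X\geq X_0$ the interval $(yX,\,(1+\epsilon)yX)$ contains a prime. (This follows, e.g., from $\pi((1+\epsilon)x)-\pi(x)\sim \epsilon x/\log x\to\infty$.) Consequently $\lceil yX\prceil \leq (1+\epsilon)yX$, and hence $\lceil yX\prceil - yX \leq \epsilon y X$ for $X\geq X_0$. If one preferred a quantitative estimate, the Baker--Harman--Pintz bound $p_{n+1}-p_n=O(p_n^{0.525})$ would give $\lceil yX\prceil - yX = O_y(X^{0.525})$, which is more than enough but unnecessary for this statement.

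Combining the numerator bound with $N\geq X$, for $X\geq X_0$ we have
\begin{equation*}
0 \;\leq\; \frac{\lceil yX \prceil - yX}{N} \;\leq\; \frac{\epsilon\, yX}{X} \;=\; \epsilon y.
\end{equation*}
Since $\epsilon>0$ was arbitrary, the non-negative quantity on the left has $\limsup$ equal to $0$, which gives the claimed limit. There is no real obstacle here; the only point worth noting is that the limit is taken jointly over $X$ and $N$ with $N\geq X$, but since the upper bound depends only on $\epsilon$ and $y$ (uniformly in $N$), the argument goes through verbatim.
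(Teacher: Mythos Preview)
Your proof is correct and follows essentially the same approach as the paper: bound the numerator $\lceil yX\prceil - yX$ by a prime-gap estimate and then use $N\geq X$ on the denominator. The only minor difference is that the paper invokes the Baker--Harman--Pintz bound $\lceil x\prceil - x < x^{\theta}$ for some $\theta<1$ directly, whereas you observe (correctly) that the weaker $o(x)$ bound from the prime number theorem already suffices; you even mention BHP as an alternative, so there is no substantive divergence.
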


\begin{proof}
For any $x\in\mathbb{R}_{>0}$, we have $\lceil x\prceil-x<x^\theta$ for some constant $\theta <1$ (see, e.g., \cite{BHP}). Subsequently, we deduce that
\[\lim_{X \to \infty} \frac{ \lceil yX \prceil - yX}N \le \lim_{X \to \infty} \frac{ \lceil yX \prceil - yX}X=0. \]
\end{proof}

\begin{lemma}\label{lem:often-used}
Fix $\eta \in \mathbb{R}_{> 0}$ and $\delta \in (\frac12, 1)$. If $f : \mathbb{R} \to \mathbb{C}$ is continuous, then, assuming the Riemann hypothesis, we have
    \begin{equation}\label{eq.often-used}
        \lim_{X \to \infty} \frac{\log X}{X^\delta} \sum_{\substack{p\in[\eta X,\eta X+X^{\delta}] \\ p \text{ prime}}} f\left(\frac{p}{X}\right) = f(\eta)
    \end{equation}
\end{lemma}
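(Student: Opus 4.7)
The plan is to decouple the factor $f(p/X)$ from the prime-counting by using continuity, and then apply the Riemann hypothesis only to count primes in the short interval $[\eta X,\eta X+X^{\delta}]$. Since $\delta<1$, for $p$ in this interval we have $p/X\in[\eta,\eta+X^{\delta-1}]$, and this range shrinks to $\{\eta\}$ as $X\to\infty$. By continuity of $f$ at $\eta$, given any $\varepsilon>0$, one can choose $X_0$ so that $|f(p/X)-f(\eta)|<\varepsilon$ uniformly over the summation range for all $X\geq X_0$.

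Next, I would invoke the RH-conditional short-interval prime number theorem: under RH one has
\[
\pi(x+h)-\pi(x)=\frac{h}{\log x}\bigl(1+o(1)\bigr)
\]
as long as $h\gg \sqrt{x}(\log x)^2$. Applied with $x=\eta X$ and $h=X^{\delta}$, the hypothesis $\delta>\tfrac12$ guarantees $X^{\delta}\gg \sqrt{\eta X}(\log X)^{2}$, so
\[
\pi(\eta X+X^{\delta})-\pi(\eta X)=\frac{X^{\delta}}{\log(\eta X)}\bigl(1+o(1)\bigr)=\frac{X^{\delta}}{\log X}\bigl(1+o(1)\bigr).
\]
Multiplying by $\log X/X^{\delta}$ therefore yields a quantity tending to $1$.

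To finish, write
\[
\frac{\log X}{X^{\delta}}\sum_{\substack{p\in[\eta X,\eta X+X^{\delta}]\\ p\text{ prime}}} f\!\left(\tfrac{p}{X}\right)=f(\eta)\cdot\frac{\log X}{X^{\delta}}\bigl(\pi(\eta X+X^{\delta})-\pi(\eta X)\bigr)+E(X),
\]
where
\[
|E(X)|\leq\varepsilon\cdot\frac{\log X}{X^{\delta}}\bigl(\pi(\eta X+X^{\delta})-\pi(\eta X)\bigr).
\]
The first term converges to $f(\eta)$ by the previous paragraph, while $|E(X)|\leq\varepsilon(1+o(1))$. Since $\varepsilon>0$ was arbitrary, the stated limit follows.

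The only nontrivial input is the short-interval prime number theorem under RH; this is the main obstacle in that it is what forces the restriction $\delta>\tfrac12$, but it is a standard consequence of the classical explicit formula together with the RH bound $\psi(x)=x+O(\sqrt{x}(\log x)^{2})$, so there is no real technical difficulty beyond citing it.
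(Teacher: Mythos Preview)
Your argument is correct and follows essentially the same route as the paper: use continuity of $f$ at $\eta$ to replace $f(p/X)$ by $f(\eta)$ up to an $\varepsilon$-error uniformly over the summation range, and then appeal to the RH-conditional short-interval prime number theorem (the paper packages this as the statement $\#\{p\in[\eta X,\eta X+X^{\delta}]\}\cdot\frac{\log X}{X^{\delta}}\to 1$) to evaluate the remaining prime count. The only cosmetic difference is that you spell out the explicit form of the RH input, whereas the paper cites it from an earlier equation.
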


\begin{proof}
Since we have $p/X \to \eta$ as $X\rightarrow\infty$ for $p\in[\eta X,\eta X+X^{\delta}]$, we know that, for all $\epsilon' > 0$, there exists $X_0$ such that $X > X_0$ implies $|p/X-\eta |<\epsilon'$. 
Since $f$ is continuous, for all $\epsilon > 0$ there exists $\epsilon' > 0$ such that $|p/X - \eta| < \epsilon'$ implies $|f(p/X) - f(\eta)| < \epsilon$. Thus, for $X$ sufficiently large, we have:
    \begin{align}\label{eq.abssum}
        \left\lvert\sum_{\substack{p\in[\eta X,\eta X+X^{\delta}] \\ p \text{ prime}}} f\left(\frac{p}{X}\right) -  f(\eta)\sum_{\substack{p\in[\eta X,\eta X+X^{\delta}] \\ p \text{ prime}}} 1 \right\rvert \leq  \sum_{\substack{p\in[\eta X,\eta X+X^{\delta}] \\ p \text{ prime}}} \left\lvert f\left(\frac{p}{X}\right) - f(\eta )\right\rvert < \epsilon \sum_{\substack{p\in[\eta X,\eta X+X^{\delta}] \\ p \text{ prime}}} 1.
    \end{align}
    Multiplying equation~\eqref{eq.abssum} by $\log X/X^\delta$, and using equation \eqref{eq.PNT}, we deduce that:
    \begin{equation}
        \left\lvert\lim_{X \to \infty} \frac{\log X}{X^\delta}\sum_{\substack{p\in[\eta X,\eta X+X^{\delta}] \\ p \text{ prime}}} f\left(\frac{p}{X}\right) - f(\eta )\right\rvert < \epsilon.
    \end{equation}
    Since $\epsilon > 0$ is arbitrary, we deduce equation~\eqref{eq.often-used}.
\end{proof}

\subsection{Lemmas for Theorem~\ref{thm.main-real}}
We begin with the following manifestation of the Polya--Vinogradov inequality.
\begin{lemma} \label{lem.PV}
Let $y\in\mathbb{R}_{>0}$, and let $d \in \mathbb Z$ be such that $\chi_d$ is non-principal.  
If $\delta \in (\frac12, 1)$  
then assuming the Generalized Riemann hypothesis, for any $\epsilon>0$, as $X\rightarrow\infty$, we have
\begin{equation}\label{eq.PV}
    \left|\sum_{\substack{p\in[yX,yX+X^{\delta}] \\ p \text{ prime}}} \chi_d(p)\right| \ll (yX)^{\frac12+\epsilon}.
\end{equation}
\end{lemma}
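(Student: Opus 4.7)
The plan is to reduce the prime sum to the Chebyshev function $\theta(x,\chi_d)=\sum_{p\le x}\chi_d(p)\log p$ by partial summation, then invoke the explicit formula for $L(s,\chi_d)$ under GRH. Concretely, I would first write, by Abel summation,
\begin{equation*}
\sum_{\substack{p\in[yX,yX+X^{\delta}] \\ p \text{ prime}}} \chi_d(p) = \frac{\theta(yX+X^{\delta},\chi_d)-\theta(yX,\chi_d)}{\log(yX+X^{\delta})} + \int_{yX}^{yX+X^{\delta}} \frac{\theta(t,\chi_d)-\theta(yX,\chi_d)}{t\,\log^{2} t}\,dt,
\end{equation*}
so the whole problem is reduced to estimating $\theta(v,\chi_d)-\theta(u,\chi_d)$ for $yX\le u<v\le yX+X^{\delta}$.

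Next I would pass from $\theta$ to the von Mangoldt weighted sum $\psi(x,\chi_d)=\sum_{n\le x}\Lambda(n)\chi_d(n)$, using the standard identity $\theta(x,\chi_d)=\psi(x,\chi_d)+O(x^{1/2}\log x)$, where the error absorbs contributions from prime powers $p^k$ with $k\ge 2$ (and is harmless because we want a bound of size $(yX)^{1/2+\epsilon}$). The key analytic input is then the GRH-conditional explicit formula: for $\chi_d$ non-principal, truncating at height $T$ gives
\begin{equation*}
\psi(x,\chi_d)= -\sum_{\substack{\rho=\frac12+i\gamma \\ |\gamma|\le T}}\frac{x^{\rho}}{\rho}+O\!\left(\frac{x\log^{2}(|d|xT)}{T}\right).
\end{equation*}
Choosing $T=x$, the classical zero-counting estimate yields $\psi(x,\chi_d)\ll x^{1/2}\log^{2}(|d|x)$ under GRH. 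Since $d$ is fixed, this is $\ll x^{1/2+\epsilon}$.

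Substituting back into the Abel summation identity, the boundary term is $O((yX)^{1/2+\epsilon}/\log X)$, and the integral term is dominated by $((yX)^{1/2+\epsilon})\cdot X^{\delta}/(yX\log^{2}X)$, which is smaller since $\delta<1$. Combining yields the claimed bound $(yX)^{1/2+\epsilon}$. The only genuine obstacle is the bookkeeping around the truncation parameter $T$ in the explicit formula, which is routine; all of the content is the GRH-conditional square-root cancellation in $\psi(x,\chi_d)$, so no new ideas beyond the standard short-interval prime number theorem for Dirichlet characters are required.
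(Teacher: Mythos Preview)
Your argument is correct. The Abel summation reducing to $\theta(x,\chi_d)$, the passage to $\psi(x,\chi_d)$ at the cost of $O(x^{1/2})$, and the GRH-conditional explicit formula giving $\psi(x,\chi_d)\ll x^{1/2}\log^2(|d|x)$ are all standard and correctly assembled; the final bookkeeping on the boundary and integral terms is fine as well.

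The paper does not actually prove this lemma: its entire proof is the one-line citation ``This follows from \cite[equation~(5.1)]{GS07}.'' So you have taken a genuinely different route in the sense that you supply a self-contained derivation from the explicit formula, whereas the paper appeals to a black-box result of Granville--Soundararajan. What the paper's approach buys is brevity; what yours buys is transparency about where GRH enters and explicit control of the dependence on $d$ via the $\log^2(|d|x)$ factor. One small remark: in later applications (e.g., the bound on $R_{\Phi,A}$) the paper implicitly uses this lemma with $d$ ranging over $|d|\ll X$, so it is worth noting that your bound $x^{1/2}\log^2(|d|x)\ll x^{1/2+\epsilon}$ is in fact uniform for $|d|\le x^{O(1)}$, not merely for fixed $d$; your argument already gives this, even though the lemma as stated fixes $d$.
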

\begin{proof}
This follows from \cite[equation (5.1)]{GS07}.
\end{proof}
Following \cite[Section~(2.2)]{S00}, for an integer $k$ and a prime number $p$, we define
\begin{equation}
    G_k(p) = \left(\frac{1 - i}{2} + \left( \frac{-1}{p}\right) \frac{1+i}{2}\right) \sum_{b \bmod p} \left(\frac{b}{p}\right)
    e^{2\pi i bk/p},
\end{equation}
and
\begin{equation}
    \tau_k(p) = \sum_{b \bmod p}\left(\frac{b}{p}\right)e^{2\pi i bk/p}, 
\end{equation}
so that
\begin{equation}\label{eq.tauG}
    \tau_k(p) =  \left(\frac{1 + i}{2} + \left( \frac{-1}{p}\right) \frac{1 -  i}{2}\right) G_k(p).
\end{equation}
Moreover, using the notation from Section~\ref{s:background-1.1}, we have $\tau_1(p)=\tau\left(\left(\frac{\cdot}{p}\right)\right)$.
For a smooth Schwartz function $\Phi$, we let $\widetilde{\Phi}$ be as in equation~\eqref{eq.Stransform}.
At various points in what follows, we will use the fact that, if $\Phi$ is Schwartz, then $\widetilde{\Phi}$ is Schwartz.
We will also use the notation $\widehat{\Phi}$ to denote the usual Fourier transform, that is
\[\widehat{\Phi}(\xi)=\int_{-\infty}^{\infty} \Phi(x)e^{-2\pi i x\xi}dx.\]
Note that
\begin{equation}
    \tau_k(p)\widehat{\Phi}\left(\frac{kX}{\alpha a^2 p} \right) + \tau_{-k}(p)\widehat{\Phi}\left(\frac{-kX}{\alpha a^2 p} \right) = G_k(p)\widetilde{\Phi}\left(\frac{kX}{\alpha a^2 p} \right) + G_{-k}(p)\widetilde{\Phi}\left(\frac{-kX}{\alpha a^2 p} \right). 
\end{equation}
Since $G_k(p) = \left(\frac{-1}{p}\right)G_{-k}(p)$ and $\tau_0(p) = G_0(p) = 0$, equation~\eqref{eq.tauG} implies:
\begin{equation}\label{eq:recombined}
    \frac{X}{\alpha a^2 p} \sum_{k \in \mathbb{Z}} \tau_k(p)\widehat{\Phi}\left( \frac{kX}{\alpha a^2p}\right) = \frac{X}{\alpha a^2 p} \sum_{k \in \mathbb{Z}} G_k(p)\widetilde{\Phi}\left(\frac{kX}{\alpha a^2 p}\right).
\end{equation}
For completeness, we prove the following form of \cite[Lemma~2.6]{S00}.
\begin{lemma}\label{lem.Sound-lemma}
Let $\Phi\geq0$ be a smooth function with compact support and let $\beta = \sup_{x \in \mathbb{R}} \{|x| : \Phi(x) > 0\}$.
For a prime number $p$, and any $A\in (0,\sqrt{\beta X}]$, we have
\begin{equation}\label{eq.SMap}
    \frac{1}{X} \sum_{\substack{d \in \mathbb{Z} \\ (d,2)=1}}\Big(\sum_{\substack{a^2 \mid |d| \\ a \leq A}} \mu(a) \Big) \Phi\left( \frac{d}{X}\right)\left( \frac{d}{p}\right)\sqrt{p} = \frac{1}{2} \left(\frac{2}{p}\right) \sum_{\substack{0< a \leq A \\ (a, 2p) = 1}} \frac{\mu(a)}{a^2} \sum_{\substack{k\in \mathbb{Z}}} (-1)^k \left(\frac{k}{p} \right)\widetilde{\Phi}\left(\frac{kX}{2a^2 p} \right).
\end{equation}
\end{lemma}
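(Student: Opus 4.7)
My plan is to derive the right-hand side by first rewriting the left-hand side as an inner sum over odd integers $m$, then applying Poisson summation modulo $2p$ to convert it into a dual Fourier series. To this end, I would start by switching the order of summation: for fixed $a$ with $a \leq A$, write $d = a^2 m$ with $m \in \mathbb{Z}$. Compatibility with $(d, 2) = 1$ forces both $a$ and $m$ to be odd, and since $(a^2 m/p) = (a^2/p)(m/p)$ vanishes when $p \mid a$, we may restrict to $(a, 2p) = 1$, giving
\begin{equation*}
\mathrm{LHS} = \frac{\sqrt{p}}{X} \sum_{\substack{0 < a \leq A \\ (a, 2p) = 1}} \mu(a) \sum_{\substack{m \in \mathbb{Z} \\ m \text{ odd}}} \Phi\!\left(\frac{a^2 m}{X}\right) \left(\frac{m}{p}\right).
\end{equation*}

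Next, I would observe that the factor $\psi_p(m) \seteq \mathbbm{1}_{m \text{ odd}}\left(\frac{m}{p}\right)$ is periodic modulo $2p$ (both conditions combine naturally since $\gcd(2,p)=1$), so decomposing $m = 2pn + b$ for $b \in \{0, \ldots, 2p-1\}$ and applying Poisson summation in $n$ to the smooth Schwartz function $\Phi(a^2(2pn+b)/X)$ produces
\begin{equation*}
\sum_{\substack{m \in \mathbb{Z} \\ m \text{ odd}}} \Phi\!\left(\frac{a^2 m}{X}\right)\!\left(\frac{m}{p}\right) = \frac{X}{2a^2 p}\sum_{k \in \mathbb{Z}} V_k\, \widehat{\Phi}\!\left(\frac{kX}{2a^2 p}\right),
\end{equation*}
where $V_k = \sum_{\substack{b \bmod 2p \\ b \text{ odd}}}\left(\frac{b}{p}\right)e(kb/(2p))$.

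The main technical step is then evaluating $V_k$. Writing $b = 2c + 1$ for $c \in \{0,\ldots,p-1\}$ and changing variables modulo $p$ via $d \equiv 2c+1$, so that $c \equiv (d-1)\bar{2} \pmod{p}$ with $\bar{2} \equiv 2^{-1} \pmod{p}$, the sum factors into $e(k(1 - 2\bar{2})/(2p)) \cdot \tau_{\bar{2}k}(p)$. Since $p$ is odd we in fact have $1 - 2\bar{2} = -p$ as integers, so this phase collapses neatly to $e(-k/2) = (-1)^k$; combining with $\tau_{\bar{2}k}(p) = (\bar{2}/p)\tau_k(p) = (2/p)\tau_k(p)$ yields $V_k = (2/p)(-1)^k \tau_k(p)$.

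Finally, to arrive at the claimed right-hand side, I would invoke equation~\eqref{eq:recombined}: since $(-1)^k = (-1)^{-k}$, the pairing $k \leftrightarrow -k$ preserves the weight, so
\begin{equation*}
\sum_k (-1)^k \tau_k(p)\widehat{\Phi}\!\left(\tfrac{kX}{2a^2p}\right) = \sum_k (-1)^k G_k(p)\widetilde{\Phi}\!\left(\tfrac{kX}{2a^2p}\right).
\end{equation*}
A short unwinding of equation~\eqref{eq.tauG} with the classical evaluation $\tau_1(p) \in \{\sqrt{p}, i\sqrt{p}\}$ yields the clean identity $G_k(p) = \sqrt{p}\left(\frac{k}{p}\right)$; substituting this into the formula above and cancelling the $\sqrt{p}$ factors against the $\sqrt{p}/X$ prefactor produces exactly the right-hand side of the lemma. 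I expect the main obstacle to be the phase-collapse computation for $V_k$ above: the cancellation to $(-1)^k$ looks like a small miracle but really reflects the alignment of $\bar{2} \pmod p$ with the parity constraint on $b$, and it must be tracked carefully through the CRT-style decomposition of residues mod $2p$.
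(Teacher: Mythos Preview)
Your argument is correct and reaches the stated identity, but it proceeds differently from the paper. The paper handles the oddness of $d$ by writing $\sum_{d\text{ odd}}=\sum_{d\in\mathbb Z}-\left(\tfrac{2}{p}\right)\sum_{d\in\mathbb Z}$ (the latter with $d\mapsto 2d$), applies Poisson summation \emph{modulo $p$} separately to each piece, and then recombines the two resulting dual sums using $G_k(p)=\left(\tfrac{2}{p}\right)G_{2k}(p)$; the $(-1)^k$ appears only after this recombination. You instead apply Poisson summation \emph{modulo $2p$} in one stroke and compute the hybrid Gauss sum $V_k=\sum_{b\bmod 2p,\ b\text{ odd}}\left(\tfrac{b}{p}\right)e(kb/2p)$ directly, obtaining $(-1)^k\left(\tfrac{2}{p}\right)\tau_k(p)$ via the phase identity $1-2\bar 2=-p$ (with $\bar 2=(p+1)/2$). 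Your route is a bit shorter and the sign $(-1)^k$ falls out of a single exponential, whereas the paper's decomposition is more modular---it uses only standard Gauss sums mod $p$ and the identity $G_k=\left(\tfrac{2}{p}\right)G_{2k}$, which makes the parallel treatment of the variants $\chi_d$ versus $\chi_{8d}$ in Section~\ref{sec-zd} more transparent. Your invocation of equation~\eqref{eq:recombined} with the extra $(-1)^k$ weight is justified exactly as you say, since that identity is proved by pairing $k$ with $-k$ and $(-1)^k$ is symmetric under this pairing.
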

\begin{proof}
By switching the order of summation and using $\left(\frac{da^2}{p}\right) = \left(\frac{d}{p}\right)\left(\frac{a}{p}\right)^2$, we deduce that:
\begin{equation}\label{eq:sound-identity}
    \sum_{\substack{d \in \mathbb{Z} \\ (d,2)=1}}\Big(\sum_{\substack{a^2 \mid |d| \\ a \leq A}} \mu(a) \Big) \Phi\left(\frac{d}{X}\right)\left(\frac{d}{p}\right)= \sum_{\substack{a \leq A \\ (a, 2p) = 1}} \mu(a) \sum_{\substack{d \in \mathbb{Z} \\ (d,2)=1}}\Phi\left(\frac{da^2}{X}\right)\left(\frac{d}{p}\right).
\end{equation}
We observe that
\begin{equation}\label{eq:odd-split}
    \sum_{\substack{d \in \mathbb{Z} \\ (d,2)=1}}  \Phi\left(\frac{da^2}{X}\right)\left(\frac{d}{p}\right) = \sum_{d \in \mathbb{Z}} \Phi\left(\frac{da^2}{X}\right)\left(\frac{d}{p}\right) - \left(\frac{2}{p}\right)\sum_{d \in \mathbb{Z}} \Phi\left(\frac{2da^2}{X}\right)\left(\frac{d}{p}\right),
\end{equation}
and, for $\alpha \in \{1,2\}$, we write
\begin{equation}
    \sum_{d \in \mathbb{Z}} \left(\frac{d}{p}\right)\Phi\left(\frac{\alpha d a^2}{X}\right) = \sum_{b \bmod p} \left(\frac{b}{p}\right) \sum_{d \in \mathbb{Z}}  \Phi\left(\frac{\alpha a^2 (pd + b)}{X}\right).
\end{equation}
Poisson summation implies that
\begin{align}\label{eq.PS}
\begin{split}
     \sum_{d \in \mathbb{Z}}  \Phi\left(\frac{\alpha a^2(pd+b)}{X}\right) &= \sum_{k \in \mathbb{Z}} \int_{-\infty}^\infty \Phi\left(\frac{\alpha a^2(p\xi+b)}{X}\right)e\left(-\xi k\right)d\xi   \\
     &= \frac{X}{\alpha a^2 p} \sum_{k \in \mathbb{Z}}  \int_{-\infty}^{\infty} \Phi(u)e\left(\frac{kb}{p} - \frac{kXu}{\alpha a^2 p}\right)du \\
     &= \frac{X}{\alpha a^2 p}\sum_{k \in \mathbb{Z}}  e\left(\frac{kb}{p}\right)  \int_{-\infty}^{\infty} \Phi(u)e\left(\frac{-kXu}{\alpha a^2 p}\right)du \\
     &= \frac{X}{\alpha a^2 p} \sum_{k \in \mathbb{Z}}  
    e\left(\frac{kb}{p}\right)\widehat{\Phi}\left( \frac{kX}{\alpha a^2p}\right).
\end{split}
\end{align}
Multiplying equation~\eqref{eq.PS} by $\left(\frac{b}{p}\right)$, summing over $b$ mod $p$, and switching the order of summation, we get
\begin{align}\label{eq.multsum}
\begin{split}
    \sum_{b \bmod p} \left(\frac{b}{p}\right) \sum_{d \in \mathbb{Z}} \Phi\left(\frac{\alpha a^2 (pd + b)}{X}\right) &= \frac{X}{\alpha a^2 p}  \sum_{k \in \mathbb{Z}} \sum_{b \bmod p} \left(\frac{b}{p}\right)   e\left(\frac{kb}{p}\right)\widehat{\Phi}\left( \frac{kX}{\alpha a^2p}\right) \\
    & =\frac{X}{\alpha a^2 p} \sum_{k \in \mathbb{Z}} \tau_k(p)\widehat{\Phi}\left( \frac{kX}{\alpha a^2p}\right).
\end{split}
\end{align}
Combining equations~\eqref{eq:recombined},~\eqref{eq:odd-split} and~\eqref{eq.multsum}, and using $G_k(p) = \left(\frac{2}{p}\right)G_{2k}(p)$, we deduce
\begin{equation}\label{eq:possion-expansion}
   \sum_{\substack{d \in \mathbb{Z} \\ (d,2)=1}}  \Phi\left(\frac{da^2}{X}\right)\left(\frac{d}{p}\right) = \frac{X}{2a^2p}\left(\frac{2}{p}\right)\sum_{k \in \mathbb{Z}} (-1)^k G_k(p)\widetilde{\Phi}\left(\frac{kX}{2a^2p}\right).
\end{equation}
Since $G_k(p) = \left(\frac{k}{p}\right)\sqrt{p}$, equation~\eqref{eq.SMap} follows from equations~\eqref{eq:sound-identity} and~\eqref{eq:possion-expansion}.
\end{proof}

\begin{lemma}\label{lem:decay}
Let $\Phi$ be a Schwartz function. 
For any $\alpha > 1$, as $X \to \infty$, we have
\begin{equation}
    \sum_{m \in \mathbb{N}} \Phi\left(Xm\right) \ll X^{-\alpha}.
\end{equation}
\end{lemma}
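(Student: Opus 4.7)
The statement is a standard decay estimate for evaluations of a Schwartz function along an arithmetic progression with large common ratio, so the plan is simply to exploit Schwartz decay directly and sum the resulting geometric-type tail.

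The plan is to fix $\alpha > 1$ and choose an exponent $\beta > \alpha + 1$ (for instance $\beta = \alpha + 2$). By the defining property of a Schwartz function, there is a constant $C_\beta > 0$ depending only on $\Phi$ and $\beta$ such that
\[
|\Phi(t)| \leq C_\beta (1 + |t|)^{-\beta} \quad \text{for all } t \in \mathbb{R}.
\]
Applying this at $t = Xm$ with $m \geq 1$ and $X \geq 1$, and bounding $1 + Xm \geq Xm$, yields $|\Phi(Xm)| \ll_\beta (Xm)^{-\beta} = X^{-\beta} m^{-\beta}$.

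Next I would sum over $m \in \mathbb{N}$ and pull the factor $X^{-\beta}$ out of the sum, obtaining
\[
\sum_{m \in \mathbb{N}} |\Phi(Xm)| \ll X^{-\beta} \sum_{m \geq 1} m^{-\beta} = X^{-\beta} \, \zeta(\beta),
\]
where convergence of the Riemann zeta value is guaranteed by $\beta > 1$. Since $\beta > \alpha$ and $X \geq 1$, the bound $X^{-\beta} \leq X^{-\alpha}$ gives the claim $\sum_{m \in \mathbb{N}} \Phi(Xm) \ll X^{-\alpha}$ as $X \to \infty$, with an implicit constant depending on $\Phi$ and $\alpha$ but not on $X$.

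There is no real obstacle here: the only point to be careful about is to separate the role of $\alpha$ (the prescribed decay rate in the conclusion) from an auxiliary exponent $\beta$ strictly larger than $1$ and at least as large as $\alpha$, which is needed both to apply Schwartz decay effectively and to ensure the resulting series $\sum_m m^{-\beta}$ converges. Choosing $\beta$ slightly larger than $\max(\alpha, 1)$ handles both requirements simultaneously.
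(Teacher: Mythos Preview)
Your proof is correct and follows essentially the same route as the paper: apply Schwartz decay to bound $|\Phi(Xm)|$ by a constant times $(Xm)^{-\beta}$, pull out $X^{-\beta}$, and sum the convergent series $\sum_m m^{-\beta}$. The paper simply takes $\beta = \alpha$ directly (which already exceeds $1$ by hypothesis), so your auxiliary exponent is a harmless but unnecessary refinement.
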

\begin{proof}
Since $\Phi$ is Schwartz, as $X\rightarrow\infty$, we have $\Phi(X) \ll X^{-\alpha}$. We deduce that:
\begin{equation}
    \sum_{m \in \mathbb{N}} \Phi(Xm) \ll \sum_{m \in \mathbb{N}} (Xm)^{-\alpha} = X^{-\alpha} \sum_{m \in \mathbb{N}} m^{-\alpha} \ll X^{-\alpha}.
\end{equation}
\end{proof}

\section{Proof of Theorem~\ref{thm.main}}\label{s:prime}

\begin{proof}[Proof of equation \eqref{eq.Athm-1}]
We will prove the case of $P_+(y,X,c)$, and simply note that the case of $P_- (y, X, c)$ is similar.
For $p\neq N$, equation~\eqref{eq.mcos} implies
\begin{equation}\label{eq.goodstart}
\lim_{X\to \infty} P_+ (y, X, c) = \lim_{X \to \infty} \frac{\log X}{X}\sum_{\substack{N \in [X, cX] \\ N \text{ prime}}} \left [ \left( \frac{N-1}{N}\right)\cos\left(\frac{2 \pi \lceil yX \prceil}{N} \right) + \frac 1 N \right ].  
\end{equation}
With $a=c-1$ and $b=1$ in \eqref{eq.1/N}, we have \begin{equation}\label{eq.1/N-1}   \lim_{X \to \infty} \frac{\log X}{X}\sum_{\substack{N \in [X, cX]}}  \frac 1 N  =0. \end{equation}
Substituting equations \eqref{eq.yx} and \eqref{eq.1/N-1} into equation~\eqref{eq.goodstart} gives:
\begin{equation}\label{eq.goodstart+}
\lim_{X\to \infty} P_+ (y, X, c) =\lim_{X \to \infty} \frac{\log X}{X}\sum_{\substack{N \in [X, cX] \\ N \text{ prime}}}\cos\left(\frac{2 \pi yX}{N} \right).
\end{equation}
We relate the sum on the right hand side of equation~\eqref{eq.goodstart+} to an integral using the following equidistribution argument.
For each $X$, consider the set $S=\{N \in [X, cX] : N \text{ prime}\}$.
If $n=\#S$, then, according to the prime number theorem, we have
\begin{equation}\label{eq.apply-pnt}
n \sim \left ( \frac {cX}{\log(cX)} - \frac X {\log X} \right )\sim \frac {(c-1) X}{\log X}.
\end{equation} 
Consider the sequence $T=(N_i/X)_{i=1}^n$ where $N_i\in S$ for $i\in\{1,\dots,n\}$.
Any subinterval $[\alpha,\beta]\subset(1,c)$ contains the following proportion of elements in $T$:
\[\frac{\pi(\beta X)-\pi(\alpha X)}{n}\sim\frac{\beta X/\log(\beta X)-\alpha X/\log(\alpha X)}{(c-1)X/\log(X)}\sim\frac{\beta-\alpha}{c-1}.\]
In other words, the sequence $T=(N_i/X)_{i=1}^n$ approaches equidistributed on $(1,c)$.
Using equations~\eqref{eq.goodstart+} and~\eqref{eq.apply-pnt}, and applying equidistribution of the sequence $T$ on $(1,c)$, we  conclude using Riemann sums that:
\begin{equation}
\lim_{X\to \infty} P_+ (y, X, c)
    = \lim_{\substack{X \to \infty \\ n \to \infty}} \frac{c-1}{n} \sum_{i=1}^n \cos\left(\frac{2 \pi y X}{N_i} \right) =\int_1^c \cos\left(\frac{2 \pi y}{x}\right)dx.
\end{equation}
\end{proof}

\begin{remark}\label{rem.discont}
The discontinuity around $y=1$ in the bottom image from Figure~\ref{fig:prime_lavg} is explained by the fact that equation~\eqref{eq.goodstart} requires $p\neq N$.
In fact, when $p=N$, the quantity $\chi(\lceil yX \prceil)/\tau(\chi)$ vanishes. 
This discrepancy does not affect the limit.
\end{remark}
\begin{proof}[Proof of equation \eqref{eq.LAthm-1}]
Recall that we assume the Riemann hypothesis.
We will prove the case of $\widetilde P_+(y,X,c)$, and simply note that the case of $\widetilde P_- (y, X, \delta)$ is similar.
Equation~\eqref{eq.mcos} implies that
\begin{equation}\label{eq.thmlem}
 \widetilde P_+ (y, X, \delta)= \frac{\log X}{X^{\delta}} \sum_{\substack{N \in [X, X+X^{\delta}] \\ N \text{ prime}}} \left [ \left(\frac{N-1}{N} \right)\cos\left(\frac{2\pi \lceil y X \prceil}{N}\right) + \frac 1 N \right ].\\
\end{equation}
With $a=1$ and $b=\delta$ in \eqref{eq.1/N}, we obtain
\begin{equation} \label{eq.1/N-2}
    \lim_{X \to \infty} \frac{\log X}{X^\delta} \sum_{\substack{N \in [X, X+X^\delta]}} \frac{1}{N}  = 0.
\end{equation}
Applying equations \eqref{eq.yx} and \eqref{eq.1/N-2} to equation~\eqref{eq.thmlem}, we deduce
\begin{equation}\label{eq.important}
\lim_{X \to \infty} \widetilde P_+ (y, X, \delta) = \lim_{X \to \infty} \frac{\log X}{X^{\delta}} \sum_{\substack{N \in [X, X+X^{\delta}] \\ N \text{ prime}}}\cos\left(\frac{2\pi  y X }{N}\right).
\end{equation}
Now it follows from Lemma \ref{lem:often-used} that  
\begin{equation}
\lim_{X \to \infty} \frac{\log X}{X^{\delta}} \sum_{\substack{N \in [X, X+X^{\delta}] \\ N \text{ prime}}}\cos\left(\frac{2\pi  y X }{N}\right) = \cos(2 \pi y).
\end{equation}
\end{proof}

\section{Proof of Theorem~\ref{thm.main-real}}\label{s:realp}
For $d \in \mathbb Z_{<0}$, we define $\mu(d)=\mu(|d|)$.  
Let $\mathcal{G}$ be as in Section~\ref{s:intro} and note that $d \in \mathcal{G}$ if and only if $(d,2) = 1$ and $\mu^2(d) = 1$. 
Subsequently, for $y$, $\delta$, and $\Phi$ as in Theorem~\ref{thm.main-real}, we may rewrite equation~\eqref{eq.Mp-1} as follows:
\begin{equation}\label{eq.Mp-11}
M_{\Phi}(y,X,\delta)=\frac{\log X}{X^{1+\delta}}\sum_{\substack{p\in[yX,yX+X^{\delta}] \\ p \text{ prime}}}\sum_{\substack{d \in \mathbb{Z} \\ (d,2)=1}}\mu^2(d) \Phi\left(\frac{d}{X}\right)  \chi_{8d}(p)\sqrt{p}.
\end{equation}
According to \cite[equation~(1.33)]{IK}, we have $\mu^2(d) = \sum_{\substack{ a^2 \mid d \\ a >0 } } \mu(a)$, and so:
\begin{equation}\label{eq.ami}
  \sum_{\substack{d \in \mathbb{Z} \\ (d,2)=1}}\mu^2(d) \Phi\left(\frac{d}{X}\right)\chi_{8d}(p)\sqrt{p} =  \sum_{\substack{d \in \mathbb{Z} \\ (d,2)=1}} \Big(\sum_{\substack{ a^2 \mid d \\ a >0 }} \mu(a)\Big)\Phi\left(\frac{d}{X}\right) \chi_{8d}(p)\sqrt{p}.
\end{equation}
Since $\Phi$ has compact support, we may define $\beta = \sup_{x \in \mathbb{R}} \{|x| : \Phi(x) > 0\}<\infty$. 
Combining equation~\eqref{eq.Mp-11} and equation~\eqref{eq.ami}, for $A \in (0, \sqrt{\beta X}]$, we may write $M_{\Phi}(y, X,\delta) = M_{\Phi,A}(y, X, \delta) + R_{\Phi,A}(y,X,\delta)$, where:
\begin{align}
     M_{\Phi,A}(y, X, \delta) &=\frac{\log X}{X^{1+\delta}} \sum_{\substack{p\in[yX,yX+X^{\delta}] \\ p \text{ prime}}} \sum_{\substack{d \in \mathbb{Z} \\ (d,2)=1}} \Big(\sum_{\substack{a^2 \mid d \\ 0< a \leq A}} \mu(a)\Big)\Phi\left(\frac{d}{X}\right)  \chi_{8d}(p)\sqrt{p},\label{eq.MAp}\\
     R_{\Phi,A}(y, X, \delta) &=\frac{\log X}{X^{1+\delta}} \sum_{\substack{p\in[yX,yX+X^{\delta}] \\ p \text{ prime}}} \sum_{\substack{d \in \mathbb{Z} \\ (d,2)=1}}  \Big(\sum_{\substack{a^2 \mid d \\ a > A}} \mu(a)\Big)\Phi\left(\frac{d}{X}\right)  \chi_{8d}(p)\sqrt{p}.\label{eq.RAp}
\end{align}
To complete the proof, we will show that $R_{\Phi, A}(y, X, \delta)$ vanishes as $X \to \infty$, and use \cite[Lemma~2.6]{S00} in the form of Lemma~\ref{lem.Sound-lemma} to analyse the asymptotic behaviour of $M_{\Phi, A}(y, X, \delta)$. 

\subsection{Analysis of $R_{\Phi,A}(y,X, \delta)$}
Given $d\in\mathbb{Z}$, for any $\epsilon>0$, we have
\begin{equation}\label{eq.sm}
    \left\lvert\sum_{\substack{a^2 \mid d \\ a > A}} \mu(a) \right\rvert \ll \sum_{k \mid d} 1 \ll |d|^\epsilon.
\end{equation}
Since the innermost sum in equation~\eqref{eq.RAp} is empty unless $d = a^2b$ where $a > A$, and
$\Phi(d/X) = 0$ unless $|d| < \beta X$, switching the order of summation in equation~\eqref{eq.RAp} and applying equation~\eqref{eq.sm} shows that
\begin{equation}\label{eq.mR}
    \left\lvert R_{\Phi,A}(y, X, \delta)\right\rvert \ll \frac{\log X}{X^{\frac12+\delta-2\epsilon}}  \sum_{a\in (A,\sqrt{\beta X}]} \sum_{|b| \leq \frac{\beta X}{a^2} } \Phi\left(\frac{a^2b}{X}\right)\left|\sum_{\substack{p\in[yX,yX+X^{\delta}] \\ p \text{ prime}}}\chi_{8d}(p)\sqrt{\frac{p}{X}}\right|,
\end{equation}
where the outer sums are over $a,b\in\mathbb{Z}$ satisfying the specified bounds. 
Using Abel's summation formula (\cite[Theorem 4.2]{A76}), we get
\begin{equation} \label{eq.asf}
    \sum_{\substack{p\in[yX,yX+X^{\delta}] \\ p \text{ prime}}} \chi_{8d}(p)\sqrt{\frac{p}{X}}=  \sqrt{\frac{t}{X}}\psi_{8d}(t)\Big\vert_{t = yX}^{yX+X^\delta} - \int_{yX}^{yX+X^\delta} \frac{\psi_{8d}(t)}{2\sqrt{tX}}dt,  
\end{equation}
where we set $\psi_k(t) \seteq \sum_{\substack{3 \leq p \leq t \\ p \text{ prime}}} \left(\frac{k}{p}\right)$. For $t \in [yX,yX+X^\delta]$, we have $|1/\sqrt{t}| \leq (yX)^{-\frac12}$ and, by Lemma~\ref{lem.PV}, $|\psi_k(t)|\ll (yX)^{\frac12 + \epsilon}$. Consequently, we see that
 $|\psi_{8d}(t)/\sqrt{tX}| \ll y^\epsilon X^{\epsilon - \frac 1 2}$. Taking the absolute value of both sides of equation~\eqref{eq.asf}, and recalling $\delta < 1$, we deduce
\begin{equation}\label{eq.pin}
    \left|\sum_{\substack{p\in[yX,yX+X^{\delta}] \\ p \text{ prime}}} \chi_{8d}(p)\sqrt{\frac{p}{X}}\right| \ll y^{1+\epsilon}X^{\frac12 + \epsilon} + y^\epsilon X^{\delta - \frac12 + \epsilon} \ll y^{1+\epsilon}X^{\frac12 + \epsilon}. 
\end{equation}
Applying equation~\eqref{eq.pin} to equation~\eqref{eq.mR}, we infer that:
\begin{equation}\label{eq.Rp}
    \left\lvert R_{\Phi,A}(y, X, \delta)\right\rvert \ll \frac{y^{1+\epsilon}\log X}{X^{\delta - 3\epsilon}}   \sum_{a\in(A,\sqrt{\beta X}]} \sum_{|b| \leq \frac{\beta X}{a^2} } \Phi\left(\frac{a^2b}{X}\right).
\end{equation}
Since $\Phi$ is Schwartz, we have $\Phi(a^2b/X) \leq \sup_{x \in \mathbb{R}}\Phi(x) < \infty$, and so 
\begin{equation}\label{eq.ll}
 \sum_{ a\in(A,\sqrt{\beta X}]} \sum_{|b| \leq \frac{\beta X}{a^2} }\Phi\left(\frac{a^2b}{X}\right) 
    \ll  \sum_{ a\in (A,\sqrt{\beta X}]} \sum_{|b| \leq \frac{\beta X}{a^2} }1 
    \ll X\sum_{ a\in (A,\sqrt{\beta X}]}  \frac{1}{a^2}
    \leq X\int_{A}^\infty \frac{da}{a^2}
    =\frac{X}{A}.
\end{equation}
Combining equations~\eqref{eq.Rp} and~\eqref{eq.ll}, we conclude that
\begin{equation} \label{eq.ll-1}
    |R_{\Phi,A}(y,X,\delta)| \ll \frac{ y^{1+\epsilon} X^{1+3\epsilon - \delta}}{A}\log X \ll \frac{ y^{1+\epsilon} X^{1+4\epsilon - \delta}}{A}.
\end{equation}
In what follows, we will refine our choice of $\epsilon$ and $A$. 
These refinements are made not only to show that $R_{\Phi,A}(y,X,\delta)$ vanishes in the limit, but moreover to find an asymptotic formula for $M_{\Phi,A}(y,X,\delta)$ in the sequel.
Recall from the discussion above equation~\eqref{eq.MAp} that, by construction, we have $A\ll X^{\frac12}$.
For the asymptotic formula, we will require the stronger assumption that $A\ll X^{\frac14}$. 
Since $\delta > 3/4$ is fixed, we may choose $0<\epsilon < (\delta - 3/4 )/5$ and $A = X^{1+5\epsilon-\delta} \ll X^{\frac14} \ll X^{\frac12}$. 
With these choices, equation~\eqref{eq.ll-1} implies that:
\begin{equation}\label{eq.RllXe}
    |R_{\Phi,A}(y,X,\delta)| \ll y^{1+\epsilon}X^{-\epsilon}.
\end{equation}
Using equation~\eqref{eq.RllXe}, and the fact that $M_{\Phi}(y, X,\delta) = M_{\Phi,A}(y, X, \delta) + R_{\Phi,A}(y,X,\delta)$, we obtain 
\begin{equation}\label{eq:M_phi=M_phiA}   \lim_{X\rightarrow\infty}M_{\Phi}(y,X,\delta)=\lim_{X\rightarrow\infty}M_{\Phi,A}(y,X,\delta).
\end{equation}
\subsection{Analysis of $M_{\Phi,A}(y,X, \delta)$}\label{eq.anoMPA}
Recalling that $\chi_{8d}(p)=\left(\frac{8d}{p}\right)$ and applying Lemma~\ref{lem.Sound-lemma}, we deduce 
\begin{equation}
    M_{\Phi,A}(y,X,\delta) = \frac{\log X}{X^\delta} \sum_{\substack{p\in[yX,yX+X^{\delta}] \\ p \text{ prime}}} \frac{1}{2} \left(\frac{16}{p}\right)\sum_{\substack{(a, 2p) = 1\\ 0<a \leq A}} \frac{\mu(a)}{a^2} \sum_{k \in \mathbb{Z}}  (-1)^k \left( \frac{k}{p}\right)\widetilde{\Phi}\left(\frac{kX}{2a^2 p} \right).\label{eq.MpAldsp}
\end{equation}
Since the $k = 0$ term in equation~\eqref{eq.MpAldsp} is identically zero, and $\left(\frac{16}{p}\right) = \left(\frac{4}{p}\right)^2 = 1$ for odd primes $p$, we have
\begin{equation} \label{eq:MpAsimp}
    M_{\Phi,A}(y,X,\delta) = \frac{\log X}{X^\delta} \sum_{\substack{p\in[yX,yX+X^{\delta}] \\ p \text{ prime}}} \frac{1}{2} \sum_{\substack{(a, 2p) = 1\\0<a \leq A}} \frac{\mu(a)}{a^2} \sum_{\substack{k \in \mathbb{Z} \\ k \neq 0}}  (-1)^k \left( \frac{k}{p}\right)\widetilde{\Phi}\left(\frac{kX}{2a^2 p} \right),
\end{equation}
for $X$ sufficiently large (so that $yX >2$). 
Since $0<a \leq A \ll X^{\frac12} \ll p$, we have $(a, 2p) = 1$ if and only if $(a,2) = 1$. Therefore, for large $X$, we can switch the order of summation in equation~\eqref{eq:MpAsimp} to get
\begin{equation}\label{eq.MpAyXd12}
M_{\Phi,A}(y,X,\delta) = \frac{1}{2} \sum_{\substack{(a, 2) = 1\\0<a \leq A}} \frac{\mu(a)}{a^2} \sum_{\substack{k\in \mathbb{Z} \\ k \neq 0}} (-1)^k  \frac{\log X}{X^\delta} \sum_{\substack{p\in[yX,yX+X^{\delta}] \\ p \text{ prime}}}  \left( \frac{k}{p}\right)\widetilde{\Phi}\left(\frac{kX}{2a^2 p} \right).
\end{equation}
We handle the sum over non-zero integers $k$ in equation~\eqref{eq.MpAyXd12} in two stages. 
In the first stage, we break it into sums over $k$ square (written $k=\square$) and $k$ non-square (written $k\neq\square$). 
 In the second stage, we show that the sum over $k\neq\square$ exhibits cancellation, and consequently we identify the sum over $k=\square$ as the main contribution. 
To bound the sum over $k\neq\square$, we introduce $\epsilon'>0$ and break the sum further into sums over $k$ small ($|k| < a^2 X^{\epsilon'}$) and $k$ big ($|k| \geq a^2 X^{\epsilon'}$). The small $k$ are handled by equation~\eqref{eq.MAPV}, which will be deduced in the next paragraph, and the big $k$ are handled by the rapid decay of $\widetilde{\Phi}$.

Assume $k \neq \square$, so that $\left(\frac{k}{\cdot}\right)$ is a non-principal character. Using Abel's summation formula, we have
\begin{equation}\label{eq.MAAS}
\sum_{\substack{p\in[yX,yX+X^{\delta}] \\ p \text{ prime}}}  \left( \frac{k}{p}\right)\widetilde{\Phi}\left(\frac{kX}{2a^2 p} \right) = \widetilde{\Phi}\left(\frac{kX}{2a^2t} \right)\psi_k(t)\Big\vert_{t=yX}^{yX+X^\delta} - \int_{yX}^{yX+X^\delta} \frac{d}{dt}\left(\widetilde{\Phi}\left(\frac{kX}{2a^2 t}\right)\right)\psi_k(t)dt,
\end{equation}
where we set $\psi_k(t) = \sum_{\substack{3 \leq p \leq t \\ p \text{ prime}}} \left(\frac{k}{p}\right)$ as before. 
Since $\widetilde{\Phi}$ is bounded, applying Lemma~\ref{lem.PV} to equation~\eqref{eq.MAAS}, for $\epsilon'>0$ and $a\geq1$, we deduce
\begin{equation}\label{eq.MAPV}
\sum_{\substack{p\in[yX,yX+X^{\delta}] \\ p \text{ prime}}}  \left( \frac{k}{p}\right)\widetilde{\Phi}\left(\frac{kX}{2a^2 p} \right) \ll (yX)^{\frac12+\epsilon'}\left(1 + \int_0^\infty \left\lvert \widetilde{\Phi}'(u)  \right\rvert du\right) \ \ (k\neq\square).
\end{equation}
Now, summing over $k \neq \square$ with $|k| < a^2 X^{\epsilon'}$, we observe:
\begin{equation}\label{eq.bound_nosq_smallk}
   \sum_{\substack{k \in \mathbb{Z} \\ |k| < a^2X^{\epsilon'}\\ k \neq \square}}    (-1)^k\frac{\log X}{X^\delta} \sum_{\substack{p\in[yX,yX+X^{\delta}] \\ p \text{ prime}}}  \left( \frac{k}{p}\right)\widetilde{\Phi}\left(\frac{kX}{2a^2 p} \right) \ll \frac{y^{\frac12 + \epsilon'} a^2 \log X}{X^{\delta - \frac12 -2\epsilon'}}.
\end{equation}
On the other hand, summing over $k \neq \square$ with $|k| \geq a^2 X^{\epsilon'}$ and $a\geq1$, we obtain 
\begin{equation}\label{eq.largek_new1}
\begin{split}
    \left|\sum_{\substack{k \in \mathbb{Z} \\ |k| \geq a^2X^{\epsilon'}\\ k \neq \square}}   (-1)^k\frac{\log X}{X^\delta} \sum_{\substack{p\in[yX,yX+X^{\delta}] \\ p \text{ prime}}}  \left( \frac{k}{p}\right)\widetilde{\Phi}\left(\frac{kX}{2a^2 p} \right)\right|
    \ll&\sum_{\substack{k \in \mathbb{Z} \\ |k| \geq a^2X^{\epsilon'}\\ k \neq \square}}\left\lvert\widetilde{\Phi}\left(\frac{k}{2a^2 y} \right)\right\lvert,
\end{split}
\end{equation}
where we use $\left|(-1)^k\left( \frac{k}{p}\right)\right| \leq 1$ and apply Lemma \ref{lem:often-used}. 
Since $\widetilde{\Phi}$ is Schwartz, and $x \mapsto |x|^{-\alpha}$ is even for all $\alpha > 1$, we have 
\begin{equation}\label{eq.largek_new2}
    \sum_{\substack{k \in \mathbb{Z} \\ |k| \geq a^2X^{\epsilon'}\\ k \neq \square}}\left\lvert\widetilde{\Phi}\left(\frac{k}{2a^2 y} \right)\right\lvert \ll 2\int_{a^2X^{\epsilon'}-1}^{\infty} \left(\frac{u}{2a^2y}\right)^{-\alpha}du \ll a^2y^{\alpha} X^{\epsilon'(1-\alpha)}.
\end{equation}
Combining equation ~\eqref{eq.bound_nosq_smallk} with equations~\eqref{eq.largek_new1} and ~\eqref{eq.largek_new2}, we deduce: 
\begin{align}\label{eq.skN2ks}
\begin{split}
\left\lvert\sum_{\substack{k \in \mathbb{Z}\\ k \neq \square}}   (-1)^k\frac{\log X}{X^\delta} \sum_{\substack{p\in[yX,yX+X^{\delta}] \\ p \text{ prime}}}  \left( \frac{k}{p}\right)\widetilde{\Phi}\left(\frac{kX}{2a^2 p} \right)\right\rvert \ll \frac{a^2y^{\frac12 + \epsilon'}\log X}{X^{\delta - \frac12 - 2\epsilon'}} + a^2y^{\alpha}X^{\epsilon'(1-\alpha)}.
\end{split}
\end{align} 
Summing equation~\eqref{eq.skN2ks} over odd $a\leq A$, we see that:
\begin{equation}\label{eq.skN2ks2}
    \left\lvert\sum_{\substack{(a, 2) = 1\\0<a \leq A}} \frac{\mu(a)}{a^2}\sum_{\substack{k \in \mathbb{Z}\\ k \neq \square}}   (-1)^k\frac{\log X}{X^\delta} \sum_{\substack{p\in[yX,yX+X^{\delta}] \\ p \text{ prime}}}  \left( \frac{k}{p}\right)\widetilde{\Phi}\left(\frac{kX}{2a^2 p} \right)\right\rvert \ll A\left(\frac{y^{\frac12 + \epsilon'}\log X}{X^{\delta - \frac12 - 2\epsilon'}} + y^{\alpha}X^{\epsilon'(1-\alpha)}\right).
\end{equation}
Recall from the discussion preceding equation~\eqref{eq.RllXe} that we have chosen 
$A \ll X^{\frac14}$.   
Combining this bound for $A$ with equations~\eqref{eq:MpAsimp} and \eqref{eq.skN2ks2}, we conclude:
\begin{align}\label{eq:M_A_squres}
\begin{split}
    M_{\Phi,A}(y,X,\delta) =&  \frac{\log X}{X^\delta}\sum_{\substack{p\in[yX,yX+X^{\delta}] \\ p \text{ prime}}}\frac{1}{2}\sum_{\substack{0<a \leq A \\ (a, 2) = 1}} \frac{\mu(a)}{a^2} \sum_{\substack{k\in \mathbb{Z} \\ k = \square}} (-1)^k    \left( \frac{k}{p}\right)\widetilde{\Phi}\left(\frac{kX}{2a^2 p} \right) \\
    &+ O\left(\frac{y^{\frac12+\epsilon'}\log X}{X^{\delta - \frac34 -2\epsilon'}} + y^{\alpha}X^{\frac{1}{4}+\epsilon'(1-\alpha)}\right).
\end{split}
\end{align}
Since
$\delta > 3/4$,
we may choose 
$0<\epsilon'<\delta/2-3/8$ and $\alpha > 1 + (4\epsilon')^{-1}>1$. 
With these choices, we see that the error term in equation~\eqref{eq:M_A_squres} vanishes in the limit as $X\rightarrow\infty$, and hence:
\begin{equation}\label{eq:M_A_squres-e}
    \lim_{X\rightarrow\infty}M_{\Phi,A}(y,X,\delta) =  \lim_{X\rightarrow\infty}\frac{\log X}{X^\delta}\sum_{\substack{p\in[yX,yX+X^{\delta}] \\ p \text{ prime}}}\frac{1}{2}\sum_{\substack{0<a \leq A \\ (a, 2) = 1}} \frac{\mu(a)}{a^2} \sum_{\substack{k\in \mathbb{Z} \\ k = \square}} (-1)^k    \left( \frac{k}{p}\right)\widetilde{\Phi}\left(\frac{kX}{2a^2 p} \right).
\end{equation}
Considering the innermost sum in equation~\eqref{eq:M_A_squres-e}, we note that:
\begin{equation}\label{eq.innermost}
\sum_{\substack{k \in \mathbb{Z}\\ k = \square}}  (-1)^k \left( \frac{k}{p}\right)\widetilde{\Phi}\left(\frac{kX}{2a^2 p} \right) 
= \sum_{m=1}^\infty  (-1)^m \left( \frac{m^2}{p}\right)\widetilde{\Phi}\left(\frac{m^2X}{2a^2 p} \right) 
= \sum_{\substack{m=1 \\ (m, p)=1}}^\infty  (-1)^m \widetilde{\Phi}\left(\frac{m^2X}{2a^2 p} \right).
\end{equation}
Combining equations~\eqref{eq:M_A_squres-e} and~\eqref{eq.innermost}, we deduce
\begin{equation}\label{eq:M_A_squres-e+m}
    \lim_{X\rightarrow\infty}M_{\Phi,A}(y,X,\delta) =  \lim_{X\rightarrow\infty}\frac{\log X}{X^\delta}\sum_{\substack{p\in[yX,yX+X^{\delta}] \\ p \text{ prime}}}\frac{1}{2}\sum_{\substack{0<a \leq A \\ (a, 2) = 1}} \frac{\mu(a)}{a^2} \sum_{\substack{m=1 \\ (m, p)=1}}^\infty  (-1)^m \widetilde{\Phi}\left(\frac{m^2X}{2a^2 p} \right).
\end{equation}
To analyse the sum over $m$ coprime to $p$ in~\eqref{eq:M_A_squres-e+m}, we will eventually apply Poisson summation.
Prior to that, we will first quantify the error created when we extend the domain of summation to all $m>0$.
Since $\{m \in \mathbb{N} : (p, m) > 1\} = \{pm : m \in \mathbb{N}\}$ and $\widetilde{\Phi}$ is Schwartz, Lemma $\ref{lem:decay}$ implies that, for all $\kappa >1$, we have:
\begin{equation}\label{eq.bound(p,m)}
\sum_{\substack{m=1 \\ (p, m) > 1}}^\infty  (-1)^m \widetilde{\Phi}\left(\frac{m^2X}{2a^2 p} \right) \leq \sum_{\substack{m=1}}^\infty \left|\widetilde{\Phi}\left(\frac{pm^2X}{2a^2} \right) \right| \ll \left(\frac{pX}{2a^2}\right)^{-\kappa}.
\end{equation}
Since $a \leq A \ll \sqrt{X} \sim \sqrt{p/y}$, we have $pX/a^2 \gg yX$.  
Combining these bounds with equation~\eqref{eq.bound(p,m)}, we deduce
\begin{equation}\label{eq.alpapp}
    \sum_{\substack{0<a \leq A \\ (a, 2) = 1}} \frac{\mu(a)}{a^2}\sum_{\substack{m=1 \\ (p, m) > 1}}^\infty  (-1)^m \widetilde{\Phi}\left(\frac{m^2X}{2a^2 p} \right) \ll  (yX)^{-\kappa}\sum_{\substack{0<a \leq A \\ (a, 2) = 1}} \frac{1}{a^2} \ll (yX)^{-\kappa}.
\end{equation} 
Equation~\eqref{eq.alpapp} implies that
\begin{equation}\label{eq.main_term_full}
     \sum_{\substack{0<a \leq A \\ (a, 2) = 1}} \frac{\mu(a)}{a^2}\sum_{\substack{m=1 \\ (m, p)=1}}^\infty  (-1)^m \widetilde{\Phi}\left(\frac{m^2X}{2a^2 p} \right) = \sum_{\substack{0<a \leq A \\ (a, 2) = 1}} \frac{\mu(a)}{a^2}\sum_{\substack{m=1}}^\infty  (-1)^m \widetilde{\Phi}\left(\frac{m^2X}{2a^2 p} \right) + O\left((yX)^{-\kappa}\right).
\end{equation}
Applying equation~\eqref{eq.main_term_full} to equation~\eqref{eq.innermost} and combining the result with \eqref{eq:M_A_squres-e}, we deduce:
\begin{equation}\label{eq:M_A_squres-e+m1}
    \lim_{X\rightarrow\infty}M_{\Phi,A}(y,X,\delta) =  \lim_{X\rightarrow\infty}\frac{\log X}{X^\delta}\sum_{\substack{p\in[yX,yX+X^{\delta}] \\ p \text{ prime}}}\frac{1}{2}\sum_{\substack{0<a \leq A \\ (a, 2) = 1}} \frac{\mu(a)}{a^2} \sum_{m=1}^\infty  (-1)^m \widetilde{\Phi}\left(\frac{m^2X}{2a^2 p} \right).
\end{equation}
To analyse the inner sum on the right hand side of equation~\eqref{eq:M_A_squres-e+m1}, we observe that
\begin{equation}\label{eq.poission_form}
    \sum_{\substack{m=1}}^\infty  (-1)^m \widetilde{\Phi}\left(\frac{m^2X}{2a^2 p} \right) = \frac{1}{2}\sum_{m \in \mathbb{Z}}(-1)^m \widetilde{\Phi}\left(\frac{m^2X}{2a^2 p} \right) - \frac{1}{2}\widetilde{\Phi}(0).
\end{equation}
Poisson summation implies that
\begin{align}
\begin{split}\label{eq.possion_2}
    \sum_{\substack{m \in \mathbb{Z}}}  (-1)^m \widetilde{\Phi}\left(\frac{m^2X}{2a^2 p} \right) &= \sum_{m \in \mathbb{Z}}\cos\left(\pi m\right) \widetilde{\Phi}\left(\frac{m^2 X}{2a^2p}\right) \\
   &= \sum_{v \in \mathbb{Z}} \int_{-\infty}^\infty \widetilde{\Phi}\left(\frac{u^2 X}{2a^2 p}\right)\cos\left(\pi u\right) 
    e(-u v)du \\
    &= a\sqrt{\frac{2p}{X}}\sum_{v \in \mathbb{Z}} \int_{-\infty}^\infty \widetilde{\Phi}\left(w^2\right)\cos\left(\pi wa \sqrt{\frac{2p}{X}}\right) 
    e\left(-wav\sqrt{\frac{2p}{X}}\right)dw \\
    &=  a\sqrt{\frac{2p}{X}}\left(\widehat{H}_a(0) + 2\sum_{v =1}^\infty \widehat{H}_a\left(av\sqrt{\frac{2p}{X}}\right)\right),
\end{split}    
\end{align}
where $H_a(w) \seteq \tilde{\Phi}(w^2)\cos(\pi  w a\sqrt{2p/X})$. For the final equality in equation~\eqref{eq.possion_2}, we use the fact that $\widehat{H}$ is even.
To proceed, let $H(w) \seteq  \tilde{\Phi}(w^2)$ and $C_a(w) \seteq \cos(\pi  w a\sqrt{2p/X})$ so $H_a(w) = H(w)C_a(w)$. 
Since the Fourier transform of a product is the convolution of the Fourier transforms, we have:  
\begin{align}\label{eq.fourier-product}
\begin{split}
     \widehat{H}_a(w) &= (\widehat{H} \star \widehat{C_a})(w) \\&= 
    \frac{1}{2}\int_{-\infty}^{\infty} \widehat{H}(t)\left(\delta\left(w-t-a\sqrt{\frac{p}{2X}}\right) + \delta\left(w-t+a\sqrt{\frac{p}{2X}}\right)\right)dt \\ &=
    \frac12\left(\widehat{H}\left(w + a\sqrt{\frac{p}{2X}}\right) + \widehat{H}\left(w-a\sqrt{\frac{p}{2X}}\right)\right).  
\end{split}
\end{align}
Since $\widehat{H}$ is even, equation~\eqref{eq.fourier-product} implies:
\begin{align}\label{eq.fourier-sum}
\begin{split}
    \widehat{H}_a(0) + 2\sum_{v =1}^\infty \widehat{H}_a\left(av\sqrt{\frac{2p}{X}}\right) &= \widehat{H}\left(a\sqrt{\frac{p}{2X}} \right) + \sum_{v=1}^\infty \widehat{H}\left(a\sqrt{\frac{p}{2X}}(2v+1)\right) + \widehat{H}\left(a\sqrt{\frac{p}{2X}}(2v-1)\right) \\
    &= 2\sum_{\substack{v=1 \\ (v,2)=1}}^\infty \widehat{H}\left(av\sqrt{\frac{p}{2X}}\right).
\end{split}    
\end{align}
Substituting \eqref{eq.fourier-sum} into equation~\eqref{eq.possion_2}, we get:
\begin{align}\label{eq.possion_4}
\begin{split}
       \sum_{\substack{m \in \mathbb{Z}}}  (-1)^m \widetilde{\Phi}\left(\frac{m^2X}{2a^2 p} \right) 
       &=  2a\sqrt{\frac{2p}{X}} \sum_{\substack{v=1 \\ (v,2)=1}}^\infty \widehat{H}\left(av \sqrt{\frac{p}{2X}}\right).
\end{split}
\end{align}
Combining equations~\eqref{eq.poission_form} and~\eqref{eq.possion_4}, we deduce:
\begin{align}\label{eq.possion_3}
\begin{split}
    \sum_{m=1}^\infty (-1)^m \widetilde{\Phi}\left(\frac{m^2 X}{2a^2p}\right) &=  - \frac{1}{2}\widetilde{\Phi}(0) + a\sqrt{\frac{2p}{X}}  \sum_{\substack{v=1 \\ (v,2)=1}}^\infty \widehat{H}\left( av\sqrt{\frac{p}{2X}}\right).
\end{split}
\end{align}
Noting that 
\begin{equation}
    \sum_{\substack{0<a \leq A \\ (a,2) = 1}} \frac{\mu(a)}{a^2} = \prod_{p > 2} \left(1 - \frac{1}{p^2}\right) + o(1) = \frac{8}{\pi^2} + o(1), 
\end{equation}
as  $X\rightarrow\infty$ (hence $A=X^{1+5\epsilon-\delta} \to \infty$), equation~\eqref{eq.possion_3} implies:
\begin{equation}\label{eq.poission_result}
\begin{split}
    \sum_{\substack{0<a \leq A \\ (a, 2) = 1}} \frac{\mu(a)}{a^2}\sum_{\substack{m=1}}^\infty  (-1)^m \widetilde{\Phi}\left(\frac{m^2X}{2a^2 p} \right) =  -\frac{4}{\pi^2}\widetilde{\Phi}(0)+\sqrt{\frac{2p}{X}}\sum_{\substack{0<a \leq A \\ (a, 2) = 1}} \frac{ \mu(a)}{a} \sum_{\substack{v=1 \\ (v,2)=1}}^\infty \widehat{H}\left( av \sqrt{\frac{p}{2X}}\right) + o(1).
\end{split}
\end{equation}
Since $\widehat{H}$ is Schwartz,  the double sum in the right hand side of equation~\eqref{eq.poission_result} converges, and we conclude that the sum on the left hand side converges to a smooth function of $p/X\sim y$ as $X\rightarrow\infty$ (hence $A=X^{1+5\epsilon-\delta} \to \infty$).  
Noting the appearance of this sum in equation~\eqref{eq:M_A_squres-e+m1}, we may apply Lemma~\ref{lem:often-used} to deduce: 
\begin{equation}\label{eq.almost}
\begin{split}
\lim_{X \to \infty}  M_{\Phi,A}(y,X,\delta) =  \frac{1}{2}\sum_{\substack{a =1 \\ (a,2)=1}}^\infty \frac{\mu(a)}{a^2} \sum_{m =1}^\infty (-1)^m \widetilde{\Phi}\left(\frac{m^2}{2a^2 y}\right).
\end{split}
\end{equation}
Combining equations~\eqref{eq:M_phi=M_phiA} and~\eqref{eq.almost}, we conclude the proof of Theorem \ref{thm.main-real}.

\section{$1$-level density}\label{s:1ld}
In \cite[Section~4]{C23}, murmurations of Kronecker symbols are considered from the perspective of $L$-function zeros via the explicit formula.
Furthermore, Rubinstein--Sarnak observed that the murmuration density for Kronecker symbols in Theorem~\ref{thm.main-real}, when properly normalized, interpolates the transition in the 1-level densities for a symplectic family of $L$-functions \cite{S23ii}.
We recover the observation of Rubinstein--Sarnak in Corollary~\ref{c:1ld}, in which the left (resp. right) limit corresponds to the case that $p=yX$ is much smaller (resp. larger) than $X$. 

\begin{proof}[Proof of Corollary~\ref{c:1ld}]
We maintain the notation from Corollary~\ref{c:1ld}.
Combining equations~\eqref{eq.limreal} and~\eqref{eq.poission_result} with $y = p/X$, we observe that
\begin{align}\label{eq:m_phi_divsum}
\begin{split}
    M_\Phi(y, \delta) &= -\frac{2}{\pi^2}\widetilde\Phi(0) + \sqrt{\frac{y}{2}}\sum_{\substack{a=1 \\ (a,2)=1}}^\infty \frac{\mu(a)}{a} \sum_{\substack{v=1 \\ (v,2)=1}}^\infty \widehat{H}\left( av\sqrt{\frac{y}{2}}\right)\\
    &= -\frac{2}{\pi^2}\widetilde\Phi(0) + \sqrt{\frac{y}{2}} \sum_{\substack{n=1 \\ (n,2)=1}}^\infty \left(\sum_{\substack{a \mid n \\ (a,2)=1}} \frac{\mu(a)}{a} \right)\widehat{H}\left( n\sqrt{\frac{y}{2}}\right).
 \end{split}   
 \end{align}
For even $n$, define $b_n = 0$, and, for odd $n$, define
\begin{equation}\label{eq.defbn}
    b_n = \sum_{\substack{a \mid n \\ (a,2)=1}} \frac{\mu(a)}{a}.
\end{equation}
In particular, we have $b_1=1$.
Since, for $p$ prime and $m > 1$, we have $\mu(p^m) = 0$, we observe that, for $k \ge 1$,
 \begin{equation}\label{eq.bpk}
     b_{p^k} = \begin{cases}
         0, & p = 2,\\
         1- \frac{1}{p}, & p > 2.
     \end{cases}
 \end{equation}
If $B(s) = \sum_{n=1}^\infty b_nn^{-s}$, then equation~\eqref{eq.bpk} implies that, for $\mathrm{Re}(s)>1$,  
 \begin{align}
 \begin{split}\label{eq.Bszeta}
      B(s) &= \prod_{p > 2} \left( 1 + \left(1 - \frac{1}{p}\right)\sum_{k=1}^\infty p^{-ks} \right) =\prod_{p > 2}\left(1 + \frac{(1-1/p)p^{-s}}{1-p^{-s}}\right) 
      =  \prod_{p > 2} \frac{1 - p^{-s-1}}{1-p^{-s}}\\& = \frac {1-2^{-s}}{1-2^{-s-1}} \ \frac{\zeta(s)}{\zeta(s+1)} .
 \end{split}
 \end{align}
Equation~\eqref{eq.Bszeta} implies that $B(s)$  has meromorphic continuation to $\mathrm{Re}(s) > 0$ with a simple pole at $s=1$ and 
 \begin{equation}\label{eq.Rs1B}
     \mathrm{Res}_{s=1}B(s) = \lim_{s \to 1} (s-1) \frac {1-2^{-s}}{1-2^{-s-1}} \ \frac{\zeta(s)}{\zeta(s+1)} = \frac 2 3 \frac 1 {\zeta(2)} = \frac{4}{\pi^2}.
 \end{equation}
Consequently, we observe that the following function is meromorphic on $\mathrm{Re}(s) > 0$  with a simple pole at $s=1$:
 \begin{equation}\label{eq.MsbH}
     \int_0^\infty \left(\sum_{n=1}^\infty b_n\widehat{H}(nx)\right)x^s \frac{dx}{x} = \int_0^\infty \left(\sum_{n=1}^\infty b_n n^{-s} \widehat{H}(u) \right) u^s \frac{du}{u} = B(s) \int_0^\infty \widehat{H}(u) u^s \frac{du}{u}.
 \end{equation}
 Furthermore, equation~\eqref{eq.Rs1B} implies that:
 \begin{equation}\label{eq.Rse1}
     \mathrm{Res}_{s=1}
     \left(B(s)\int_0^\infty \widehat{H}(u) u^s \frac{du}{u} \right)= \frac{4}{\pi^2} \int_0^\infty \widehat{H}(u)du = \frac{2}{\pi^2} H(0) = \frac{2}{\pi^2} \widetilde{\Phi}(0),
 \end{equation}
 where we have used Fourier inversion, and the facts that $\hat{H}$ is even and $H(0) =\widetilde{\Phi}(0)$. We observe that, for $\eta \in (0, \frac12)$ and $1-\eta \leq \mathrm{Re}(s) \leq 1+ \eta$, the Riemann hypothesis implies that, for all $\epsilon > 0$, we have
 \begin{equation}
     \left| B(s) \right| = \left| \frac {1-2^{-s}}{1-2^{-s-1}}\frac{\zeta(s)}{\zeta(s+1)} \right| \ll \left| \frac{\zeta(s)}{\zeta(s+1)} \right| \ll |s|^\epsilon, 
 \end{equation}
as $|s| \to \infty$ (see \cite[Theorem 14.2]{T87}). Likewise, for $1-\eta \leq \mathrm{Re}(s) \leq 1+ \eta$ and $r \in \mathbb{Z}_{\geq 1}$, we may apply applying integration by parts $r$ times, and note that $\widehat{H}^{(r)}$ is a bounded function of rapid decay, to deduce: 
\begin{equation}\label{eq.i0ihh}
    \left|\int_0^\infty \widehat{H}(u)u^s \frac{du}{u} \right| = \left|\frac{(-1)^r(s-1)!}{(s+r-1)!} \int_0^\infty \widehat{H}^{(r)}(u)u^{s+r}\frac{du}{u}\right| \leq |s|^{-r} \int_0^\infty \left|\widehat{H}^{(r)}(u)\right|u^{r+\eta} du   \ll |s|^{-r},
\end{equation}
as $|s| \to \infty$. Therefore we may apply Mellin inversion, the residue theorem, and equation~\eqref{eq.Rse1} to equation~\eqref{eq.MsbH} 
 to obtain
\begin{equation}\label{eq.sn1ibhh}
    \sum_{n=1}^\infty b_n \widehat{H}(xn) \sim \text{Res}_{s=1}\left(B(s) \int_0^\infty \widehat{H}(u)u^s \frac{du}{u} x^{-s}\right) = \frac{2\widetilde{\Phi}(0)}{\pi^2 x}
\end{equation}
as $x \to 0^+$.
Combining equations~\eqref{eq.defbn} and~\eqref{eq.sn1ibhh}, we deduce that
 \begin{equation}\label{eq.2op2tp0}
    \sqrt{\frac{y}{2}} \sum_{\substack{n=1 \\ (n,2)=1} }^\infty \left(\sum_{\substack{a \mid n \\ (a,2)=1}} \frac{\mu(a)}{a} \right)\widehat{H}\left( n\sqrt{\frac{y}{2}}\right) \sim \sqrt{\frac{y}{2}}\frac{2\widetilde{\Phi}(0)}{\pi^2 \sqrt{y/2}} = \frac{2}{\pi^2} \widetilde{\Phi}(0)
 \end{equation}
 as $y \to 0^+$. Combining equation~\eqref{eq:m_phi_divsum} with equation~\eqref{eq.2op2tp0}, we conclude that
 \begin{equation}
     \lim_{y \to 0^+} M_\Phi(y,\delta) = -\frac{2}{\pi^2}\widetilde{\Phi}(0) + \frac{2}{\pi^2}\widetilde{\Phi}(0) = 0.
 \end{equation}
For the limit as $y\rightarrow\infty$, we again use equation~\eqref{eq.poission_result} with $p/X = y$, and note that, since $\widehat{H}$ is Schwartz, for all $\alpha > 1$, Lemma \ref{lem:decay} implies that 
\begin{equation}\label{eq.fml}
    \left|\sqrt{2y}\sum_{\substack{a =1 \\ (a, 2) = 1}}^\infty \frac{\mu(a)}{a} \sum_{v =1}^\infty \widehat{H}\left(\sqrt{\frac{y}{2}}av\right)\right| \ll  \sum_{\substack{a =1 \\ (a, 2) = 1}}^\infty \frac{\sqrt{y}}{a} \left(\sqrt{\frac{y}{2}}a\right)^{-\alpha} \ll y^{\frac{1-\alpha}{2}} \sum_{\substack{a=1 \\ (a,2)=1}}^{\infty} \frac{1}{a^{\alpha + 1}} \ll y^{\frac{1-\alpha}{2}}.
\end{equation}
Combining equations~\eqref{eq:m_phi_divsum} and~\eqref{eq.fml} we conclude that 
\begin{equation}
    \lim_{y\rightarrow \infty} M_{\Phi}(y, \delta) = \lim_{y\rightarrow \infty}  \frac 1 2  \left(\frac{-4 \widetilde{\Phi}(0)}{\pi^2} + O\left(y^{\frac{1-\alpha}{2}}\right)\right) = -\frac{2}{\pi^2} \widetilde \Phi(0).
\end{equation}
\end{proof}

\section{Supplementary results}\label{s:generalconductors}
\subsection{Including composite conductors in Theorem~\ref{thm.main} }
\subsubsection{Preliminaries}
Note that by \cite[equation~(3.7)]{IK}, the set $\mathcal{D}_\pm(N)$ is empty if and only if $N \equiv 2 \bmod 4$. 
For $\delta\in(0,1)$, $y\in\mathbb{R}_{>0}$, and $c \in \mathbb{R}_{>1}$, we will analyse functions connected to the following:\begin{align}
Q_{\pm}(y,X, c)=&\frac{1}{X}\sum_{\substack{N \in[X, cX] \\ N \not\equiv 2 \bmod 4}} \sum_{\chi \in \mathcal{D}_\pm(N)} \frac{\chi(\lceil yX \prceil)}{\tau(\chi)},\label{eq.Mpm}\\
\widetilde{Q}_{\pm}(y,X,\delta)=&\frac{1}{X^\delta}\sum_{\substack{N \in[X, X+X^\delta] \\ N \not\equiv 2 \bmod 4}} \sum_{\chi \in \mathcal{D}_\pm(N)} \frac{\chi(\lceil yX \prceil)}{\tau(\chi)}. 
\end{align}
For an integer $N>1$ and a prime number $p$ coprime to $N$, Lemma~\ref{lem.cosi} implies:
\begin{align}
\sum_{\substack{\chi \bmod N \\ \ \chi \neq \chi_0, \, \chi(-1) = 1}} \tau(\overline{\chi}) \chi(p)&=1+\phi(N)\cos \left(\frac{2 \pi p}N \right), \label{eq.cos-2}\\
\sum_{\substack{\chi \bmod N \\ \chi(-1) = -1}} \tau(\overline{\chi}) \chi(p)&=i\phi(N)\sin\left(\frac{2 \pi p}{N} \right). \label{eq.sin-2}
\end{align}
We introduce the sets
\begin{equation}
\mathcal{I}_\pm(N) = \{\text{$\chi$ mod $N$ : $\chi$ imprimitive, $\chi \neq \chi_0$, $\chi(-1) = \pm 1$} \},
\end{equation}
so that equations~\eqref{eq.cos-2} and~\eqref{eq.sin-2} may be rewritten as follows:
\begin{equation}\label{eq.imprim-cos}
\sum_{\chi \in \mathcal D_+(N)} \tau(\overline{\chi}) \chi(p)
=1+ \phi(N)\cos \left (
\frac{2 \pi p} N \right )- \sum_{\chi \in \mathcal{I}_+(N)} \tau(\overline{\chi}) \chi(p),
\end{equation}
\begin{equation}\label{eq.imprim-sin}
\sum_{\chi \in \mathcal D_-(N)} \tau(\overline{\chi}) \chi(p)
=i\phi(N)\sin \left (
\frac{2 \pi p} N \right )- \sum_{\chi \in \mathcal{I}_-(N)} \tau(\overline{\chi}) \chi(p).
\end{equation}
Applying equation~\eqref{eq.1/G} to equations~\eqref{eq.imprim-cos} and~\eqref{eq.imprim-sin}, we deduce
\begin{equation}\label{eq.imprim-cos-2}
\sum_{\chi \in \mathcal D_+(N)} \frac{\chi(p)}{\tau(\chi)}
=\frac{1}{N}+ \frac{\phi(N)}{N}\cos \left (
\frac{2 \pi p} N \right )- \frac{1}{N}\sum_{\chi \in \mathcal{I}_+(N)} \tau(\overline{\chi}) \chi(p),
\end{equation}
\begin{equation}\label{eq.imprim-sin-2}
\sum_{\chi \in \mathcal D_-(N)} \frac{\chi(p)}{\tau(\chi)}
=\frac{-i\phi(N)}{N}\sin \left (
\frac{2 \pi p} N \right )+ \frac{1}{N}\sum_{\chi \in \mathcal{I}_-(N)} \tau(\overline{\chi}) \chi(p).
\end{equation}
In order to recreate the proof of Theorem~\ref{thm.main} for composite conductors, equations~\eqref{eq.imprim-cos-2} and~\eqref{eq.imprim-sin-2} suggest that we need to analyse sums of imprimitive characters. 
The following lemma will be useful for that purpose.
\begin{lemma}\label{lem.im-mob}
If an imprimitive character $\chi$ mod $N$ is induced by the primitive character $\chi_1$ mod $N_1$, then,  we have
\begin{equation}\label{eq.imprimGauss}
\tau(\overline{\chi})=\mu \left({\frac {N}{N_{1}}}\right)\overline{\chi_{1}\left({\frac {N}{N_{1}}}\right)}G\left(\overline{\chi _{1}}\right), 
\end{equation}
where $\mu(n)$ is the M\"obius function as before.
\end{lemma}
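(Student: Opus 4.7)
The plan is to unfold the definition of $\tau(\overline{\chi})$, separate the coprimality constraint by Möbius inversion, and recognise the inner sums as Gauss sums of the primitive character $\overline{\chi_1}$. Set $q \coloneqq N/N_1$. Since $\chi$ is induced by $\chi_1$, we have $\overline{\chi}(a) = \overline{\chi_1}(a)\,\mathbf{1}_{(a,q)=1}$, because the factor $\overline{\chi_1}(a)$ already vanishes when $(a,N_1)>1$, and this together with $(a,q)=1$ is equivalent to $(a,N)=1$. Applying the identity $\mathbf{1}_{(a,q)=1}=\sum_{d\mid\gcd(a,q)}\mu(d)$ and swapping the order of summation, I obtain
\begin{equation*}
\tau(\overline{\chi}) \;=\; \sum_{d\mid q}\mu(d)\sum_{\substack{a\bmod N\\ d\mid a}}\overline{\chi_1}(a)\,e^{2\pi i a/N}.
\end{equation*}

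Next I would substitute $a = db$, with $b$ running modulo $N/d$, and use that Dirichlet characters are completely multiplicative to pull out $\overline{\chi_1}(d)$, yielding
\begin{equation*}
\tau(\overline{\chi}) \;=\; \sum_{d\mid q}\mu(d)\,\overline{\chi_1}(d)\sum_{b\bmod N/d}\overline{\chi_1}(b)\,e^{2\pi i b/(N/d)}.
\end{equation*}
Since $d\mid q=N/N_1$, one has $N/d = N_1\cdot(q/d)$, so $N_1\mid N/d$. The main obstacle, and really the heart of the argument, is to show that the inner sum vanishes unless $d=q$. For this I would write $b = c + N_1 j$ with $c$ running mod $N_1$ and $j$ running mod $k\coloneqq q/d$, use the $N_1$-periodicity of $\overline{\chi_1}$, and factor
\begin{equation*}
\sum_{b\bmod N/d}\overline{\chi_1}(b)\,e^{2\pi i b/(N/d)} \;=\; \Bigl(\sum_{c\bmod N_1}\overline{\chi_1}(c)\,e^{2\pi i c/(N/d)}\Bigr)\Bigl(\sum_{j=0}^{k-1}e^{2\pi i j/k}\Bigr).
\end{equation*}
The inner geometric sum equals $1$ if $k=1$ and $0$ otherwise, so the only surviving term is $d=q$, in which case $N/d=N_1$ and the remaining sum is exactly $\tau(\overline{\chi_1})=G(\overline{\chi_1})$.

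Collecting, only the $d=q$ contribution survives, giving
\begin{equation*}
\tau(\overline{\chi}) \;=\; \mu(q)\,\overline{\chi_1}(q)\,G(\overline{\chi_1}) \;=\; \mu\!\left(\tfrac{N}{N_1}\right)\overline{\chi_1\!\left(\tfrac{N}{N_1}\right)}\,G(\overline{\chi_1}),
\end{equation*}
as required. As a sanity check, the formula gracefully covers the degenerate cases: if $N/N_1$ fails to be squarefree then $\mu(N/N_1)=0$, and if $\gcd(N/N_1,N_1)>1$ then $\overline{\chi_1}(N/N_1)=0$, in either case forcing $\tau(\overline{\chi})=0$, which is consistent with well-known vanishing phenomena for Gauss sums of imprimitive characters.
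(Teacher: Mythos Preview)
Your argument is correct and is the standard direct computation of the Gauss sum of an induced character; each step (M\"obius sieving out the extra coprimality condition, the substitution $a=db$, and the splitting $b=c+N_1 j$ leading to a vanishing geometric sum unless $d=q$) is valid as written. The paper itself does not give a proof but simply cites \cite[Lemma~3.1]{IK}, and your proof is essentially the one found there, so there is no substantive difference in approach.
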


\begin{proof}
\cite[Lemma 3.1]{IK}.
\end{proof}
Inspired by equations~\eqref{eq.imprim-cos-2} and~\eqref{eq.imprim-sin-2}, we introduce the following functions:
\begin{equation}\label{eq.Ilavanish_dyadic}
E_{\pm}(y,X,c)=\frac{1}{X}\sum_{\substack{N \in[X, cX] \\ N \not\equiv 2 \bmod 4 }} \frac{1}{N} \sum_{\mathcal{I}_\pm(N)} \tau(\overline{\chi}) \chi(\lceil yX \prceil),
\end{equation}
\begin{equation}\label{eq.Ilavanish_short}
\widetilde{E}_{\pm}(y,X,\delta)=\frac{1}{X^\delta}\sum_{\substack{N \in[X, X+X^\delta] \\ N \not\equiv 2\bmod 4}} \frac{1}{N} \sum_{\mathcal{I}_\pm(N)} \tau(\overline{\chi}) \chi(\lceil yX \prceil).
\end{equation}
One sees that it is natural to investigate:
\begin{equation}\label{eq.Murm}
T_{\pm}(y,X,c)=Q_{\pm}(y,X,c)\pm E_{\pm}(y,X,c), 
\end{equation} 
\begin{equation}\label{eq.Murm2}
\widetilde{T}_{\pm}(y,X,\delta)=\widetilde{Q}_{\pm}(y,X,\delta)\pm\widetilde{E}_{\pm}(y,X,\delta).
\end{equation}
\begin{remark}
Figure \ref{fig:idft_dyadic_lavg} (resp. Figure \ref{fig:idft_short_lavg}) suggest that $|T_\pm(y,X,c)|$ and $|Q_\pm(y,X,c)|$ (resp.  $|\widetilde{T}_\pm(y,X,c)|$ and $|\widetilde{Q}_\pm(y,X,\delta)|$) are significantly larger than $|E_\pm(y, X, \delta)|$ (resp. $|\widetilde{E}_\pm(y,X,\delta)|$).
Since there is a canonical bijection between Dirichlet characters mod $N$ and primitive Dirichlet characters with conductor dividing $N$, $E_\pm(y, X, \delta)$ (resp. $\widetilde{E}_\pm(y,X,\delta)$) reduces to a sum over primitive characters with conductor dividing $N$. 
Using Lemma~\ref{lem.im-mob}, we see that this introduces a M\"obius factor. Consequently, we expect that this term is smaller due to additional cancellation. 
\end{remark}
Another complexity arising from equations~\eqref{eq.imprim-cos-2} and~\eqref{eq.imprim-sin-2} is the need to understand murmuration-type limits for $\phi(N)/N$, for which the following lemma will be useful.

\begin{lemma}\label{lem.phi_order}
If $a\in\mathbb{R}_{> 0}$ and $b \in (0,1]$, then
\begin{equation}\label{eq.5/pi2}
\lim_{X \to \infty} \frac{1}{aX^b} \sum_{\substack{N \in [X, X + aX^b] \\ N \not\equiv 2\bmod 4}} \frac{\phi(N)}{N}  
=\frac{5}{\pi^2}.
\end{equation}
\end{lemma}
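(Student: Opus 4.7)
The plan is to write the restricted sum as the difference of two unrestricted averages and compute each by elementary Möbius inversion applied to $\phi(N)/N = \sum_{d \mid N} \mu(d)/d$. Concretely,
\[
\sum_{\substack{N \in [X, X+aX^b] \\ N \not\equiv 2 \bmod 4}} \frac{\phi(N)}{N} = \sum_{N \in [X, X+aX^b]} \frac{\phi(N)}{N} \; - \; \sum_{\substack{N \in [X, X+aX^b] \\ N \equiv 2 \bmod 4}} \frac{\phi(N)}{N},
\]
so it suffices to show the two right hand sums have leading terms $(6/\pi^2)\,aX^b$ and $(1/\pi^2)\,aX^b$ respectively, with negligible error; the difference then gives the constant $5/\pi^2$ claimed.

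For the total sum, I would swap the order of summation and use the elementary estimate $\#\{N \in I : d \mid N\} = |I|/d + O(1)$ to obtain the classical identity $\sum_{N \le Y} \phi(N)/N = Y \sum_{d \le Y} \mu(d)/d^2 + O(\log Y) = Y/\zeta(2) + O(\log Y) = (6/\pi^2)\,Y + O(\log Y)$. Differencing this estimate at $Y = X + aX^b$ and $Y = X - 1$ yields the short-interval asymptotic $\sum_{N \in [X, X+aX^b]} \phi(N)/N = (6/\pi^2)\,aX^b + O(\log X)$.

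For the residue-class sum, the substitution $N = 2m$ with $m$ odd combined with $\phi(2m) = \phi(m)$ reduces it to $\tfrac12 \sum_{m \text{ odd},\, m \in [X/2, X/2 + aX^b/2]} \phi(m)/m$. I would then apply the same Möbius argument with the restriction that both $m$ and $d$ are odd, so that the count of odd multiples of odd $d$ up to $Y$ becomes $Y/(2d) + O(1)$. The main term is then $(Y/2)\sum_{d \text{ odd}} \mu(d)/d^2 = (Y/2)\prod_{p > 2}(1-p^{-2}) = (Y/2)(8/\pi^2) = (4/\pi^2)\,Y$, giving $\sum_{m \text{ odd}, m \le Y} \phi(m)/m = (4/\pi^2)\,Y + O(\log Y)$. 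Halving and restricting to the rescaled short interval yields
\[
\sum_{\substack{N \in [X, X+aX^b] \\ N \equiv 2 \bmod 4}} \frac{\phi(N)}{N} = \frac{1}{\pi^2}\,aX^b + O(\log X).
\]

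Subtracting the two estimates and dividing by $aX^b$ produces $5/\pi^2 + O(\log X / X^b)$, which tends to $5/\pi^2$ since $b > 0$. There is no essential obstacle beyond standard Möbius bookkeeping and the evaluation of the two Euler products $\zeta(2)^{-1}$ and $\prod_{p>2}(1-p^{-2})$; the only care required is in verifying that the $O(\log X)$ errors from the floor function are dominated by the short-interval length $aX^b$, which is automatic for any $b \in (0, 1]$.
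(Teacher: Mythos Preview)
Your argument is correct. Both you and the paper use the same decomposition into the unrestricted sum minus the sum over $N\equiv 2\bmod 4$, and both evaluate the two pieces to $6/\pi^2$ and $1/\pi^2$ respectively. The difference is in how the pieces are handled: the paper quotes the asymptotic $\sum_{N\le X}\phi(N)/N=(6/\pi^2)X+O\bigl((\log X)^{2/3}(\log\log X)^{4/3}\bigr)$ from the literature and invokes a density result of Nymann for the odd part, whereas you derive both asymptotics directly from the identity $\phi(N)/N=\sum_{d\mid N}\mu(d)/d$ and the Euler products $\zeta(2)^{-1}$ and $\prod_{p>2}(1-p^{-2})=8/\pi^2$. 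Your route is more self-contained and has the concrete advantage that the explicit $O(\log X)$ error term appears automatically in both pieces, so the passage to the short interval $[X,X+aX^b]$ with any $b\in(0,1]$ is immediate; in the paper the short-interval step is passed over quickly after the full-range densities are established. The only thing worth tidying in your write-up is to make explicit that the tail $\sum_{d>Y}\mu(d)/d^2=O(1/Y)$ contributes $O(1)$ to the main term, which you have implicitly absorbed into the $O(\log Y)$.
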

\begin{proof}
It is known that
\begin{equation}\label{eq.order_phi_error}
\sum_{\substack{0<N \leq X\\N\in\mathbb{Z}}} \frac{\phi(N)}{N} = \frac{6}{\pi^2}X+O\left((\log X)^{2/3}(\log \log X)^{4/3}\right), 
\end{equation}
from which it follows that
\begin{equation}\label{eq.order_phi}
\lim_{X \to \infty} \frac{1}{X}\sum_{\substack{0<N \leq X\\N\in\mathbb{Z}}} \frac{\phi(N)}{N} = \frac{6}{\pi^2} 
\end{equation}
(cf. \cite[equation~(1.74)]{IK}).
Similarly, according to \cite{N75}, we have that
\begin{equation}
    \lim_{X \to \infty} \frac{1}{X} \sum_{\substack{0< N \leq X \\ N \in \mathbb{Z}}}  \frac{\phi(2N+1)}{2N+1} = \frac{8}{\pi^2}.\label{eq.order_phi_odd}
\end{equation}
Using equation~\eqref{eq.order_phi_odd} and the identity $\phi(4N+2) = \phi(2N+1)$, we compute:
\begin{equation}
\lim_{X \to \infty} \frac{1}{X} \sum_{\substack{0<N \leq X \\ N \equiv 2\bmod 4}}\frac{\phi(N)}{N} 
= \lim_{X \to \infty} \frac{1}{4X} \sum_{\substack{0<N \leq X\\N\in\mathbb{Z}}} \frac{\phi(4N+2)}{4N+2} = \frac{1}{8} \lim_{X \to \infty} \frac{1}{X} \sum_{\substack{0<N \leq X\\N\in\mathbb{Z}}} \frac{\phi(2N+1)}{2N+1} = \frac{1}{\pi^2}. \label{eq.order_phi_2(4)}
\end{equation}
Subtracting equation~\eqref{eq.order_phi_2(4)} from equation~\eqref{eq.order_phi}, and noting the error term in equation~\eqref{eq.order_phi_error},  we conclude 
\begin{equation}\label{eq.N<X5pi2}
\lim_{X \to \infty} \frac{1}{X} \sum_{\substack{0<N \leq X \\ N \not\equiv 2 \bmod 4}} \frac{\phi(N)}{N} 
=\frac{5}{\pi^2}.
\end{equation}
Equation~\eqref{eq.N<X5pi2} implies that 
\begin{equation}
    \sum_{\substack{0<N \leq X \\ N \not\equiv 2 \bmod 4}} \frac{\phi(N)}{N} \sim \sum_{\substack{0<N \leq X \\ N \in \mathbb{Z}}} \frac{5}{\pi^2},
\end{equation}
from which equation~\eqref{eq.5/pi2} follows.
\end{proof}

\subsubsection{Geometric Intervals}
In this subsection, we will prove the following theorem, which is visualised in Figure~\ref{fig:idft_dyadic_lavg}.
\begin{theorem}\label{thm.dyadic}
If $c\in\mathbb{R}_{>1}$ and $y\in\mathbb{R}_{>0}$, then
\begin{equation}\label{eq.int}
\lim_{X \to \infty} T_{\pm}(y,X,c) =
\begin{cases}
\frac{5}{\pi^2}\int_1^c \cos\left(\frac{2\pi y}{x} \right)dx, & \text{ if }+, \\
-i\frac{5}{\pi^2}\int_1^c \sin\left(\frac{2\pi y}{x} \right)dx, & \text{ if }-.
\end{cases}
\end{equation}
\end{theorem}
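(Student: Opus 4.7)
The plan is to first exploit an algebraic cancellation: substituting equations~\eqref{eq.imprim-cos-2} and~\eqref{eq.imprim-sin-2} into $Q_\pm(y,X,c)$ as given by equation~\eqref{eq.Mpm}, and comparing with the definition of $E_\pm(y,X,c)$ in equation~\eqref{eq.Ilavanish_dyadic}, the imprimitive character contributions cancel exactly in the combinations $Q_+ + E_+$ and $Q_- - E_-$. (Equations~\eqref{eq.imprim-cos-2}--\eqref{eq.imprim-sin-2} require $(\lceil yX \prceil, N)=1$, but this fails for only $O_y(1)$ values of $N\in[X,cX]$, which contribute $o(1)$ to $T_\pm$.) This leaves
\begin{align*}
T_+(y,X,c) &= \frac{1}{X}\sum_{\substack{N \in [X,cX] \\ N \not\equiv 2 \bmod 4}}\left(\frac{1}{N} + \frac{\phi(N)}{N}\cos\!\left(\frac{2\pi \lceil yX \prceil}{N}\right)\right)+o(1),\\
T_-(y,X,c) &= -\frac{i}{X}\sum_{\substack{N \in [X,cX] \\ N \not\equiv 2 \bmod 4}}\frac{\phi(N)}{N}\sin\!\left(\frac{2\pi \lceil yX \prceil}{N}\right)+o(1).
\end{align*}
The $1/N$ contribution in $T_+$ vanishes in the limit by equation~\eqref{eq.1/N} with $a=c-1$, $b=1$, and equation~\eqref{eq.yx} combined with Lipschitz continuity of $\cos,\sin$ and the trivial bound $\phi(N)/N\le 1$ allows one to replace $\lceil yX \prceil$ by $yX$ inside the trigonometric arguments at the cost of a uniform $o(1)$ error.

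The heart of the proof is then the weighted equidistribution statement that, for every continuous $f:[1,c]\to\mathbb{C}$,
\begin{equation*}
\lim_{X\to\infty}\frac{1}{X}\sum_{\substack{N\in[X,cX] \\ N\not\equiv 2 \bmod 4}}\frac{\phi(N)}{N}\, f\!\left(\frac{N}{X}\right) = \frac{5}{\pi^2}\int_1^c f(x)\,dx.
\end{equation*}
To establish this, I would fix $\eta\in(0,1)$, partition $[X,cX]$ into $O(X^{1-\eta})$ subintervals $I_k$ of length $\sim X^{\eta}$, apply Lemma~\ref{lem.phi_order} on each $I_k$ to see that the partial sum of $\phi(N)/N$ over $N\not\equiv 2\pmod 4$ in $I_k$ is $\tfrac{5}{\pi^2}|I_k|(1+o(1))$, and invoke uniform continuity of $f$ on $[1,c]$ to replace $f(N/X)$ by a constant on each $I_k$; summing over the partition yields the Riemann sum for $\tfrac{5}{\pi^2}\int_1^c f(x)\,dx$. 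Applying this with $f(x)=\cos(2\pi y/x)$ and $f(x)=\sin(2\pi y/x)$ concludes the proof.

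The main technical obstacle is the uniformity needed in the weighted Riemann sum step: the parameter $\eta$ must be chosen so that Lemma~\ref{lem.phi_order} provides small relative error on each subinterval while $f(N/X)$ stays nearly constant on it. A cleaner alternative would be Abel summation against $f(N/X)$ using the uniform asymptotic $\sum_{\substack{X\le N\le u\\ N\not\equiv 2 \bmod 4}}\phi(N)/N=\tfrac{5}{\pi^2}(u-X)+o(X)$ for $u\in[X,cX]$, which is immediate from the Walfisz-type error term underlying equation~\eqref{eq.order_phi_error}.
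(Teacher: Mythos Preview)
Your proposal is correct and follows essentially the same route as the paper: after the algebraic cancellation of the imprimitive contributions and the removal of the $1/N$ term via equation~\eqref{eq.1/N}, the paper also partitions $[X,cX]$ into subintervals (choosing $n=\lceil\sqrt{X}\rceil$, i.e.\ your $\eta=\tfrac12$), freezes the trigonometric factor on each via uniform continuity, applies Lemma~\ref{lem.phi_order} on each block, and recognises the resulting expression as a Riemann sum. Your treatment is in fact more explicit than the paper's on the coprimality caveat and on the replacement of $\lceil yX\prceil$ by $yX$; your Abel-summation alternative is a tidy variant but not needed.
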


\begin{figure}[h]
\centering
\includegraphics[width=0.7\textwidth]{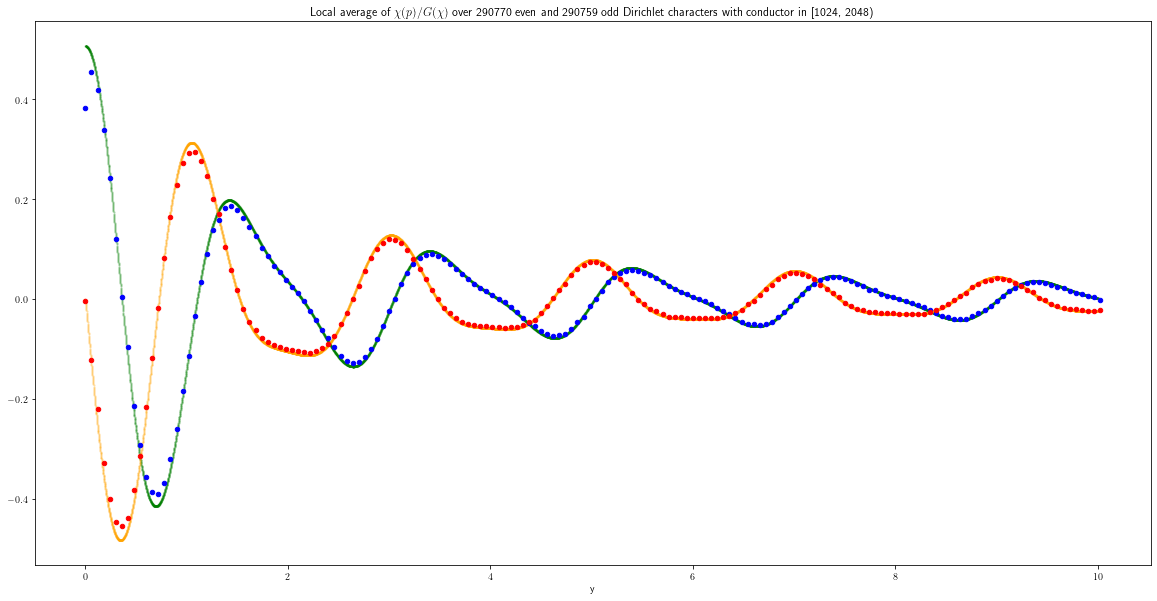}
\caption{\sf   Plot of $T_\pm(y, 1024, 2)$ for $0 \leq y \leq 10$ with $+$ in blue and (the imaginary part of) $-$ in red.  We also show $\frac{5}{\pi^2}\int_1^2 \cos \left(\frac{2\pi y}{x}\right)dx$ in green and $-\frac{5}{\pi^2}\int_1^2 \sin \left(\frac{2\pi y}{x}\right)dx$ in orange. }
\label{fig:idft_dyadic_lavg}
\end{figure}

\begin{proof}
We will prove the case of $T_+(y,X,c)$, and simply note that $T_-(y,X,c)$ is similar. 
Applying equations~\eqref{eq.1/N},~\eqref{eq.Mpm}, and~\eqref{eq.Ilavanish_dyadic} to equation~\eqref{eq.imprim-cos-2}, we deduce
\begin{equation}\label{eq.tobreak}
\begin{split}
\lim_{X \to \infty} T_{+}(y,X,\delta) &= \lim_{X \to \infty} \frac{1}{X} \sum_{\substack{N \in [X, cX] \\ N \not\equiv 2\bmod 4}} \frac{\phi(N)}{N}\cos\left(\frac{2\pi yX}{N}\right)-\lim_{X\rightarrow\infty}E_+(y,X,\delta)\\
&=\lim_{X \to \infty} \frac{1}{X} \sum_{i=1}^{n} \sum_{\substack{N \in I_i \\ N \not\equiv 2 \bmod 4}} \frac{\phi(N)}{N} \cos\left(\frac{2\pi y X}{N}\right)-\lim_{X\rightarrow\infty}E_+(y,X,\delta),
\end{split}
\end{equation}
where, for each $X$, we put $n=\left \lceil \sqrt{X} \right \rceil$ and, for $i\in\left\{1,\dots,n\right\}$, we write
\begin{equation}
I_i = \left[X + \frac{i-1}{n}(c-1)X, X + \frac{i}{n}(c-1)X \right).
\end{equation}
Fix $\gamma \in (0, 1]$ and, for each $X$, choose $i = \lceil \gamma n\rceil\in\{1,\dots,n\}$. 
We have
\begin{equation}\label{eq.gamma}
\lim_{X \to \infty} \frac{i-1}{n} = \lim_{X \to \infty }\frac{i}{n} = \gamma.
\end{equation}
For $N \in I_i$, equation~\eqref{eq.gamma} implies that
\begin{equation}\label{eq.another-squeeze}
\frac{1}{1+\gamma(c-1)} = \lim_{X \to \infty} \frac{X}{X+i(c-1)X/n} \leq \lim_{X \to \infty} \frac{X}{N} \leq \lim_{X \to \infty}\frac{X}{X+(i-1)(c-1)X/n} = \frac{1}{1+\gamma(c-1)}.
\end{equation}
Combining equations~\eqref{eq.Murm}, ~\eqref{eq.tobreak}, and ~\eqref{eq.another-squeeze}, we deduce
\begin{equation}\label{eq.prelemma}
\lim_{X \to \infty} T_{+}(y,X,\delta)  =  \lim_{X \to \infty}\frac{c-1}{\sqrt{X}} \sum_{i=1}^n  \cos\left(\frac{2\pi y}{1+\gamma(c-1)}\right) \frac{1}{(c-1)\sqrt{X}} \sum_{\substack{N \in I_i \\ N \not\equiv 2 \bmod 4}} \frac{\phi(N)}{N}.
\end{equation}
Using Lemma~\ref{lem.phi_order} and equation~\eqref{eq.prelemma}, we are led to
\begin{equation}\label{eq.RSfor5/pi2}
\lim_{X \to \infty} T_{+}(y,X,\delta)  = \lim_{n \to \infty} \frac{c-1}{n} \sum_{i=1}^n   \frac{5}{\pi^2}\cos\left(\frac{2\pi y}{1+i(c-1)/n}\right).
\end{equation}
Now equation~\eqref{eq.int} follows upon recognising equation~\eqref{eq.RSfor5/pi2} as a Riemann sum.
\end{proof}

\subsubsection{Short intervals}
We first prove the following theorem, which is visualised in Figure~\ref{fig:idft_short_lavg}.
\begin{theorem}
If $\delta\in(0,1)$ and $y\in\mathbb{R}_{>0}$, then
\begin{equation}\label{eq.moreconductors}
\lim_{X\rightarrow\infty}\widetilde{T}_{\pm}(y,X,\delta)= \begin{cases}
\frac{5}{\pi^2}\cos(2\pi y), & \text{ if $+$},\\
-i\frac{5}{\pi^2}\sin(2\pi y), & \text{ if $-$}.
\end{cases}
\end{equation}
\end{theorem}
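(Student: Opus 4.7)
The plan is to mimic the proof of Theorem~\ref{thm.dyadic}, substituting the short interval in place of the geometric one and using Lemma~\ref{lem.phi_order} with $a=1$, $b=\delta$. I treat the $+$ case in detail; the $-$ case is analogous, with equation~\eqref{eq.imprim-sin-2} replacing equation~\eqref{eq.imprim-cos-2} and $-i\sin$ replacing $\cos$.

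First, set $p = \lceil yX\prceil$ and note that, because $p$ is prime, the only $N \in [X, X+X^\delta]$ with $(p, N) > 1$ are the (at most $O(1 + X^\delta/p)$) multiples of $p$ in that range; their total contribution to $\widetilde{T}_+(y, X, \delta)$ is $O(1/X^\delta + 1/X)$, which vanishes in the limit. For the remaining $N$, I would apply equation~\eqref{eq.imprim-cos-2}; summing over $N$ and dividing by $X^\delta$, the imprimitive character piece is exactly $\widetilde{E}_+(y, X, \delta)$, which cancels against the term added in the definition~\eqref{eq.Murm2} of $\widetilde{T}_+$. What remains is
\[
\widetilde{T}_+(y, X, \delta) = \frac{1}{X^\delta} \sum_{\substack{N \in [X, X+X^\delta] \\ N \not\equiv 2 \bmod 4}} \left[\frac{1}{N} + \frac{\phi(N)}{N}\cos\!\left(\frac{2\pi \lceil yX\prceil}{N}\right)\right] + o(1).
\]

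Next, I would invoke equation~\eqref{eq.1/N} with $a=1$, $b=\delta$ to see that the $1/N$ piece is $o(1)$, and equation~\eqref{eq.yx} together with the Lipschitz property of cosine to replace $\lceil yX \prceil/N$ by $yX/N$ at the cost of a further $o(1)$. The surviving expression is
\[
\frac{1}{X^\delta} \sum_{\substack{N \in [X, X+X^\delta] \\ N \not\equiv 2 \bmod 4}} \frac{\phi(N)}{N}\cos\!\left(\frac{2\pi yX}{N}\right).
\]
For any $N \in [X, X+X^\delta]$ we have $yX/N = y + O(yX^{\delta-1})$, hence $\cos(2\pi yX/N) = \cos(2\pi y) + O(yX^{\delta-1})$ uniformly in $N$. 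Pulling the constant $\cos(2\pi y)$ outside the sum and bounding the error using the trivial estimate $\phi(N)/N \le 1$ on an interval containing $O(X^\delta)$ integers, the error contributes $O(yX^{\delta-1}) = o(1)$. Lemma~\ref{lem.phi_order} applied with $a=1$, $b=\delta$ then identifies the remaining normalized sum with $5/\pi^2$, yielding the claim.

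The main (modest) obstacle is Lemma~\ref{lem.phi_order} itself, which must hold on an interval of length as short as $X^\delta$: this relies on the error term in equation~\eqref{eq.order_phi_error} being of size $O((\log X)^{2/3}(\log\log X)^{4/3})$, which is indeed $o(X^\delta)$ for every $\delta \in (0,1)$, so no additional input is required. All other steps are direct applications of equations~\eqref{eq.1/N} and~\eqref{eq.yx}, used in the same way as in the proof of Theorem~\ref{thm.main}.
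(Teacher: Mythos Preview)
Your proof is correct and follows essentially the same approach as the paper, which simply says ``mimicking the proof of equation~\eqref{eq.LAthm-1}'' to reach equation~\eqref{eq.limTtild} and then invokes Lemma~\ref{lem.phi_order}. You have just written out the details of that mimicry explicitly (the handling of $(p,N)>1$, the $1/N$ term via equation~\eqref{eq.1/N}, replacing $\lceil yX\prceil$ by $yX$ via equation~\eqref{eq.yx}, and the uniform approximation $yX/N=y+O(yX^{\delta-1})$), and your remark that Lemma~\ref{lem.phi_order} requires the error in equation~\eqref{eq.order_phi_error} to be $o(X^\delta)$ is exactly the point that makes the lemma applicable for all $\delta\in(0,1)$.
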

\begin{figure}[h]
\centering
\includegraphics[width=0.7\textwidth]{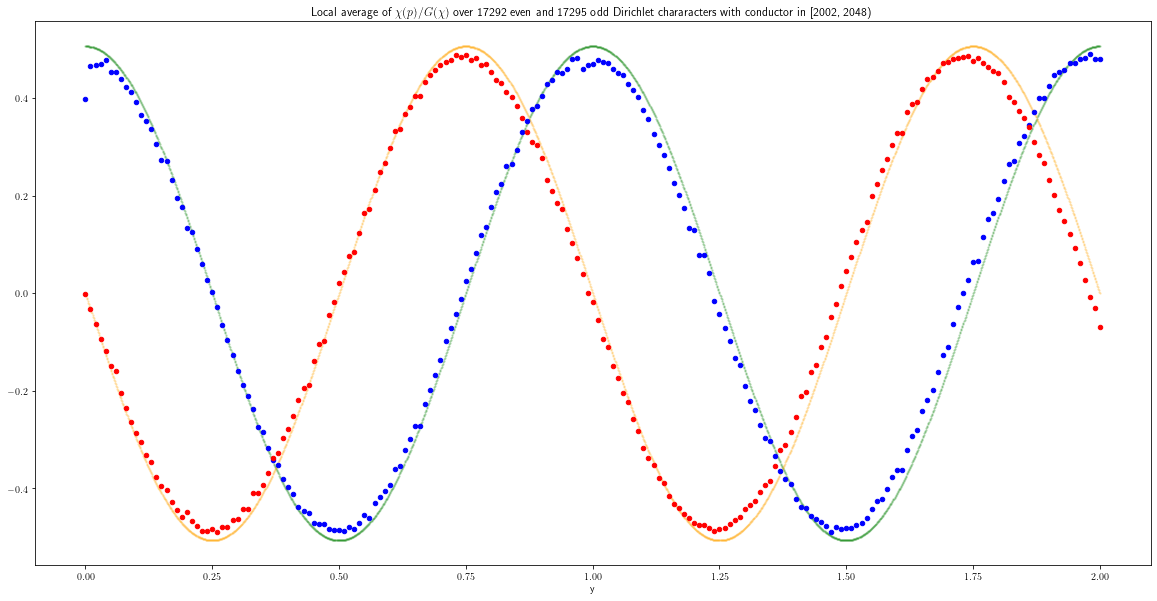}
\caption{\sf Plot of $\widetilde{T}_\pm(y, 2002, 0.51)$ for $0 \leq y \leq 2$ with $+$ in blue and (imaginary part of) $-$ in red.  We also show $\frac{5}{\pi^2}\cos(2\pi y)$ in green and $\frac{5}{\pi^2}\sin(2\pi y)$ in orange.}
\label{fig:idft_short_lavg}
\end{figure}

\begin{proof}
We will prove the case of $\widetilde{T}_+(y,X,\delta)$ and simply note that $\widetilde{T}_-(y,X,\delta)$ is similar. Mimicking the proof of equation~\eqref{eq.LAthm-1} leads to 
\begin{equation}\label{eq.limTtild}
    \lim_{X \to \infty} \widetilde{T}_+(y,X,\delta) = \lim_{X \to \infty} \frac{\cos(2\pi y)}{X^\delta} \sum_{\substack{N \in [X, X+X^\delta] \\ N \not\equiv 2 \text{ mod } 4}} \frac{\phi(N)}{N}. 
\end{equation}
In light of equation~\eqref{eq.limTtild}, the result follows from Lemma~\ref{lem.phi_order}.
\end{proof}
We next obtain an extension of equation~\eqref{eq.LAthm-1} by considering a set of special conductors specified as follows.  
Let $S$ denote the set of positive integers that are not congruent to $2$ mod $4$ and are either prime or squarefull\footnote{A positive integer is {\it squarefull} if all its prime factors exponents are at least $2$.}. 
By equation~\eqref{eq.imprimGauss},
this is precisely the set $S$ of integers such that, if $N \in S$, then
\begin{equation} \label{eq.sum_imprim}
    \sum_{\mathcal{I}_\pm(N)} \tau(\overline{\chi}) \chi(p) = 0, \ \ (N\in S).
\end{equation}
Using equations~\eqref{eq.1/N} and~\eqref{eq.sum_imprim}, we deduce that, for $N \in S$, equation~\eqref{eq.imprim-cos-2} reduces to 
\begin{equation} \label{eq.special_cos}
    \sum_{\chi \in \mathcal D_\pm(N)} \frac{\chi(p)}{\tau(\chi)}  = \frac{\phi(N)}{N}\cos\left(\frac{2\pi p}{N} \right), \ \ (N\in S).
\end{equation}
Now define 
\begin{equation}\label{eq.tau}
    f(X) = \sum_{\substack{N \leq X \\ N \in S}} \frac{\phi(N)}{N},
\end{equation}
and consider
\begin{equation}
    \widetilde{Q}_{\pm}^S(y, X, \delta) = \frac{1}{f(X+X^\delta) - f(X)} \sum_{\substack{N \in [X, X+X^\delta] \\ N \in S}} \sum_{\chi \in \mathcal{D}_\pm(N)} \frac{\chi(\lceil yX\rceil^p)}{\tau(\chi)}. 
\end{equation}
This leads to the following corollary.
\begin{corollary}
Under the Riemann hypothesis,
if $\delta\in(\frac12,1)$ and $y\in\mathbb{R}_{>0}$, then
\begin{equation}\label{eq.moreconductors-1}
\lim_{X\rightarrow\infty}\widetilde{Q}^S_{\pm}(y,X,\delta)= \begin{cases}
\cos(2\pi y), & \text{ if $+$},\\
-i\sin(2\pi y), & \text{ if $-$}.
\end{cases}
\end{equation}
\end{corollary}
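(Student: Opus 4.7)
The plan is to exploit equation~\eqref{eq.special_cos} (or more precisely equation~\eqref{eq.imprim-cos-2} combined with equation~\eqref{eq.sum_imprim}) to reduce the inner character sum to an explicit trigonometric expression whenever $N \in S$. Substituting into the definition of $\widetilde{Q}^S_+$ yields
\[
\widetilde{Q}^S_+(y,X,\delta) = \frac{1}{f(X+X^\delta)-f(X)}\sum_{\substack{N\in[X,X+X^\delta]\\N\in S}}\left[\frac{1}{N}+\frac{\phi(N)}{N}\cos\!\left(\frac{2\pi\lceil yX\prceil}{N}\right)\right].
\]
By the definition of $f$ in equation~\eqref{eq.tau}, the denominator is precisely $\sum_{\substack{N\in[X,X+X^\delta]\\N\in S}}\phi(N)/N$, so the problem reduces to controlling a weighted average of cosines together with a small $1/N$ remainder.

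Under the Riemann hypothesis and with $\delta>\tfrac12$, the interval $[X,X+X^\delta]$ contains a prime $p$ for all sufficiently large $X$, and every such odd prime lies in $S$ and contributes $\phi(p)/p=1-1/p$ to the denominator. By the prime number theorem, this gives $f(X+X^\delta)-f(X)\gg X^\delta/\log X$. Meanwhile, equation~\eqref{eq.1/N} furnishes $\sum_{N\in[X,X+X^\delta]}1/N=o(X^\delta/\log X)$, so the $1/N$-contribution vanishes after dividing by the denominator. For the main term, I would combine equation~\eqref{eq.yx} (which controls $\lceil yX\prceil-yX$) with the elementary estimate $yX/N=y(1+O(X^{\delta-1}))$ valid for $N\in[X,X+X^\delta]$ to deduce
\[
\cos\!\left(\frac{2\pi\lceil yX\prceil}{N}\right)=\cos(2\pi y)+o(1)
\]
uniformly on the summation range. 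Factoring this near-constant quantity out of the weighted sum collapses the ratio to $1+o(1)$, yielding the claimed limit $\cos(2\pi y)$.

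The case $\widetilde{Q}^S_-$ proceeds analogously, starting from equation~\eqref{eq.imprim-sin-2} together with equation~\eqref{eq.sum_imprim}; here no $1/N$ artifact arises, and the uniform convergence $\sin(2\pi\lceil yX\prceil/N)\to\sin(2\pi y)$ delivers the claimed limit $-i\sin(2\pi y)$. The only substantive obstacle is ensuring that the denominator $f(X+X^\delta)-f(X)$ has the required order of magnitude $X^\delta/\log X$ to dominate the $1/N$ remainder, which is exactly where the Riemann hypothesis is invoked via its guarantee of primes in short intervals. Once this is in hand, the remaining uniform continuity arguments are routine, since the nontrivial analytic content has already been packaged into equations~\eqref{eq.sum_imprim} and~\eqref{eq.special_cos}.
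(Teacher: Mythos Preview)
Your proposal is correct and follows essentially the same route as the paper: invoke equation~\eqref{eq.sum_imprim} to reduce the inner sum to $\frac{1}{N}+\frac{\phi(N)}{N}\cos(2\pi\lceil yX\prceil/N)$, dispose of the $1/N$ remainder, replace the cosine argument by $2\pi y$ uniformly, and cancel the surviving weighted sum against the denominator via equation~\eqref{eq.tau}. You are in fact more explicit than the paper on one point it glosses over: the lower bound $f(X+X^\delta)-f(X)\gg X^\delta/\log X$, obtained from the primes in $[X,X+X^\delta]$ under RH, is exactly what is needed to ensure the $1/N$ contribution is negligible after division, and this is the genuine role of the Riemann hypothesis in the argument.
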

\begin{proof}
We prove the case of $\widetilde{Q}_+^S(y,X,\delta)$ and simply note that $\widetilde{Q}_-^S(y,X,\delta)$ is similar. Using equation~\eqref{eq.special_cos} and mimicking the proof of equation~\eqref{eq.LAthm-1} yields 
\begin{equation}\label{eq.togeneralize}
\lim_{X \to \infty} \widetilde{Q}^S_{+}(y,X,\delta)= \lim_{X \to \infty} \frac{\cos(2\pi y)}{f(X+X^\delta) - f(X)}\sum_{\substack{N \in [X,X+X^\delta] \\ N \in S}} \frac{\phi(N)}{N},
\end{equation}
from which the result follows by equation~\eqref{eq.tau}. 
\end{proof}

\subsection{Zubrilina density for $\left(\frac{d}{\cdot}\right)$} \label{sec-zd}
Using the techniques from Section~\ref{s:realp}, we may investigate the following variation of equation~\eqref{eq.Mp-1}.
Assuming that $yX >2$, we have:
\begin{align}\label{eq.Mp-2}
\begin{split}
M^\dag_{\Phi}(y,X,\delta)&=\frac{\log X}{X^{1+\delta}}\sum_{\substack{p\in[yX,yX+X^{\delta}] \\ p \text{ prime}}}\left(\sum_{\substack{d\in \mathcal{G}\\d\equiv1\bmod 4}}\Phi\left(\frac{d}{X}\right)\chi_d(p)+\sum_{\substack{d\in \mathcal{G}\\d\equiv3\bmod 4}}\Phi\left(\frac{d}{X}\right)\chi_{4d}(p)\right)\sqrt{p}\\
&=\frac{\log X}{X^{1+\delta}}\sum_{\substack{p\in[yX,yX+X^{\delta}] \\ p \text{ prime}}}\sum_{\substack{d\in \mathcal{G}}}\Phi\left(\frac{d}{X}\right)\left(\frac{d}{p}\right)\sqrt{p}.
\end{split}
\end{align}
We note that $M^{\dag}_{\Phi}(y,X,\delta)$ involves $\left(\frac{d}{\cdot}\right)$, whereas $M_\Phi(y,X,\delta)$ involves $\left(\frac{8d}{\cdot}\right)$.
For plots of the function $M^{\dag}_{\Phi}(y,X,\delta)$, see Figure~\ref{fig:quad_murm-1}. 
\begin{figure}[h]
\centering
\includegraphics[width=0.8\textwidth]{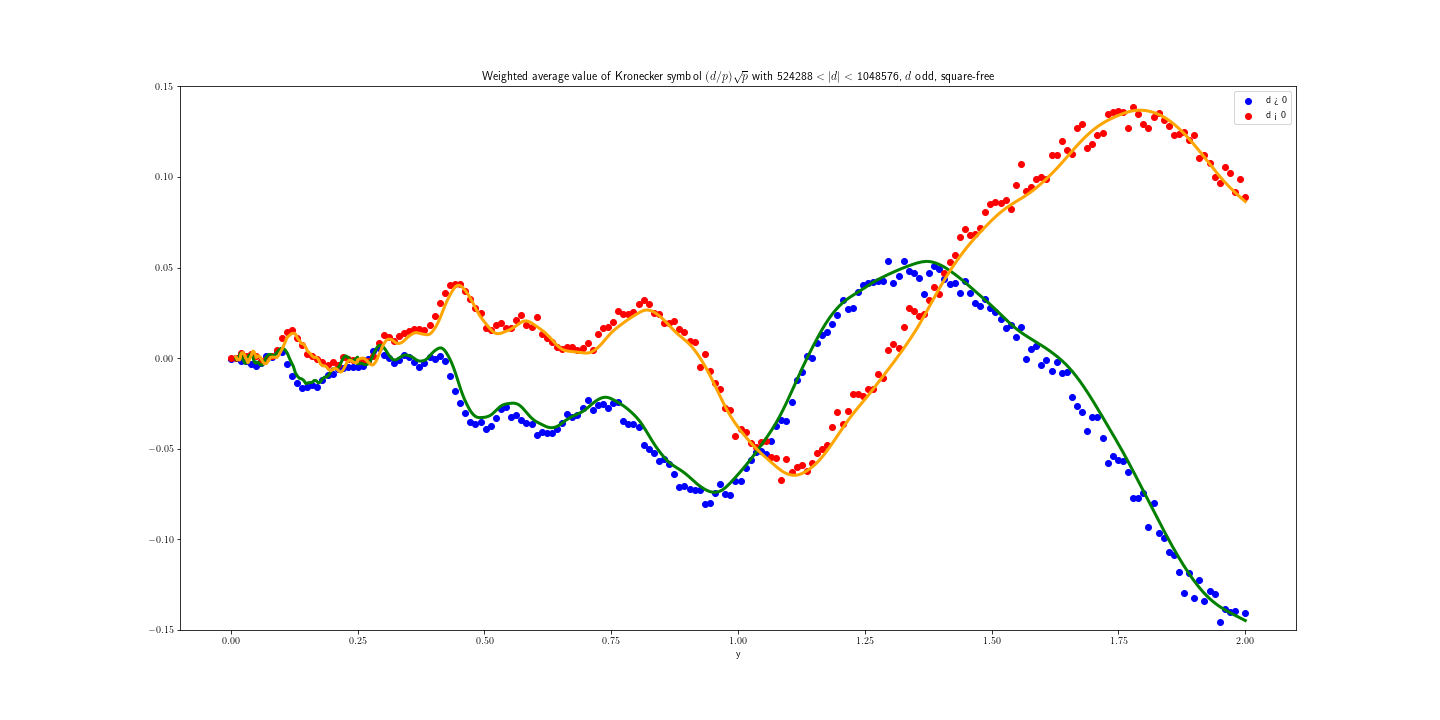}
\caption{\sf Let \[ \hspace*{-2.5cm} \Phi_+(x) =  \mathbbm{1}_{(1,2)}(x)\exp  (\tfrac{-1}{1-4(x-1.5)^2}  ), \ \  \Phi_-(x) =  \mathbbm{1}_{(-2,-1)}(x)\exp (\tfrac{-1}{1-4(-x-1.5)^2} ).\] 
 We plot  $M^{\dag}_{\Phi_\pm}(y, 2^{19}, 2/3)$ for $y \in [0,2]$ with $+$ in blue (resp. $-$ in red), and the right hand side of equation~\eqref{eq.MDMdag} in green (resp. orange). 
}
\label{fig:quad_murm-1}
\end{figure}
In this section, we calculate the following limit which yields Zubrilina density associated to $M^{\dag}_{\Phi}(y,X,\delta)$:
\begin{corollary} \label{cor-second}
Fix $y\in\mathbb{R}_{>0}$.
If $\delta\in (\frac34,1)$ and $\Phi \geq 0$ is a smooth Schwartz function with compact support, then, assuming the Generalized Riemann hypothesis, we have
 \begin{equation}\label{eq.MDMdag}
M^\dag_\Phi(y, \delta) \seteq    \lim_{X \to \infty}M^\dag_{\Phi}(y,X,\delta) = \frac{1}{2}\sum_{\substack{a =1 \\ (a, 2) = 1}}^\infty \frac{\mu(a)}{a^2} \sum_{m =1}^\infty \widetilde{\Phi}\left(\frac{m^2}{a^2y} \right).
\end{equation}   
\end{corollary}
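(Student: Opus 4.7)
The plan is to mimic the proof of Theorem~\ref{thm.main-real} in Section~\ref{s:realp} essentially verbatim, the only substantive change being the identification of the set of integers $k$ producing the main contribution.  Writing $\mu^2(d) = \sum_{a^2 \mid |d|} \mu(a)$, truncating at $a \leq A := X^{1+5\epsilon-\delta}$, and decomposing $M^\dag_{\Phi}(y,X,\delta) = M^\dag_{\Phi,A}(y,X,\delta) + R^\dag_{\Phi,A}(y,X,\delta)$, the tail $R^\dag_{\Phi,A}$ is controlled verbatim as in equations~\eqref{eq.mR}--\eqref{eq.RllXe}, since those estimates only used $|\chi_{8d}(p)| \leq 1$ and the Polya--Vinogradov bound of Lemma~\ref{lem.PV}, both of which apply equally to $\chi_d$.

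Applying Lemma~\ref{lem.Sound-lemma} directly to $M^\dag_{\Phi,A}$ -- now with no extra Kronecker factor to absorb -- and combining $\left(\frac{2}{p}\right)\left(\frac{k}{p}\right) = \left(\frac{2k}{p}\right)$, one obtains
\[
M^\dag_{\Phi,A}(y,X,\delta) \;=\; \frac{\log X}{X^\delta}\sum_{\substack{p\in[yX,yX+X^{\delta}] \\ p \text{ prime}}}\frac{1}{2}\sum_{\substack{(a,2p)=1 \\ 0<a\leq A}}\frac{\mu(a)}{a^2}\sum_{k\in\mathbb{Z}}(-1)^k\left(\frac{2k}{p}\right)\widetilde{\Phi}\!\left(\frac{kX}{2a^2p}\right).
\]
The real character $p \mapsto \left(\frac{2k}{p}\right)$ is principal precisely when $2k$ is a perfect square in $\mathbb{Z}$; since $2 \mid n^2 \Rightarrow 2 \mid n$, this forces $k = 2m^2$ for $m \in \mathbb{Z}_{\geq 0}$, and the $m = 0$ term contributes trivially.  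The systematic contribution to $M^\dag_{\Phi,A}$ is therefore
\[
\frac{\log X}{X^\delta}\sum_{p}\frac{1}{2}\sum_{\substack{(a,2p)=1\\0<a\leq A}}\frac{\mu(a)}{a^2}\sum_{m\geq 1}\widetilde{\Phi}\!\left(\frac{m^2 X}{a^2 p}\right),
\]
using $(-1)^{2m^2} = 1$ and $\widetilde{\Phi}(kX/(2a^2 p)) = \widetilde{\Phi}(m^2 X/(a^2 p))$ at $k = 2m^2$.

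For the remaining $k$, $p \mapsto \left(\frac{2k}{p}\right)$ is a non-principal real Dirichlet character, so the Polya--Vinogradov-type estimate of equations~\eqref{eq.bound_nosq_smallk}--\eqref{eq.skN2ks2} (splitting the $k$-sum at $|k| = a^2 X^{\epsilon'}$ and using Schwartz decay of $\widetilde{\Phi}$ for the tail) shows this contribution vanishes as $X \to \infty$ under GRH once $\delta > 3/4$.  For the systematic piece, the $A \to \infty$ limit is made rigorous by applying the Poisson summation argument of equations~\eqref{eq.poission_form}--\eqref{eq.possion_4} to the inner $m$-sum; the would-be divergent $v = 0$ Fourier coefficient is killed by $\sum_{(a,2)=1}\mu(a)/a = 0$ (a consequence of the prime number theorem), leaving a smooth function of $p/X \sim y$ to which Lemma~\ref{lem:often-used} applies.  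Rewriting the resulting limit in its original pre-Poisson form yields equation~\eqref{eq.MDMdag}.

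The main obstacle -- and the only place in which this proof substantially departs from that of Theorem~\ref{thm.main-real} -- is recognising the shift of the systematic set of $k$ from $k = m^2$ to $k = 2m^2$.  In Theorem~\ref{thm.main-real}, the extra factor $\left(\frac{8}{p}\right) = \left(\frac{2}{p}\right)$ coming from $\chi_{8d}$ combined with the $\left(\frac{2}{p}\right)$ produced by Lemma~\ref{lem.Sound-lemma} to give $\left(\frac{2}{p}\right)^2 = 1$, so ordinary squares $k = m^2$ remained systematic; here that extra factor is absent, the surviving $\left(\frac{2}{p}\right)$ is absorbed into $\left(\frac{k}{p}\right)$ to produce $\left(\frac{2k}{p}\right)$, and the systematic values shift accordingly.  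This single observation is responsible both for the disappearance of the alternating sign $(-1)^m$ present in equation~\eqref{eq.limreal} and for the halving of the denominator from $2a^2 y$ to $a^2 y$ in equation~\eqref{eq.MDMdag}.
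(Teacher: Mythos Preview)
Your argument is correct and follows the same route as the paper: apply Lemma~\ref{lem.Sound-lemma}, absorb the leftover $\left(\frac{2}{p}\right)$ into the Kronecker symbol to obtain $\left(\frac{2k}{p}\right)$, identify $k=2m^2$ as the systematic values (which removes $(-1)^m$ and halves the denominator), and then repeat Section~\ref{eq.anoMPA} with $H(w)=\widetilde{\Phi}(w^2)$ in place of $H_a(w)$. You are in fact more explicit than the paper on one point it leaves to ``mimicking'': without the $(-1)^m$ the Poisson expansion acquires a $v=0$ term $\tfrac12\, a\sqrt{p/X}\,\widehat{H}(0)$, and you correctly dispose of its $a$-sum via $\sum_{(a,2)=1}\mu(a)/a=0$.
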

\begin{proof}
Applying Lemma~\ref{lem.Sound-lemma} to equation~\eqref{eq.Mp-2}, we get
\begin{equation}
    M^\dag_{\Phi,A}(y,X,\delta) =\frac{\log X}{X^\delta} \sum_{\substack{p\in[yX,yX+X^{\delta}] \\ p \text{ prime}}}  \sum_{\substack{0<a \leq A \\ (a, 2p) = 1}} \frac{\mu(a)}{2a^2} \sum_{k \in \mathbb{Z}}  (-1)^k \left( \frac{2k}{p}\right)\widetilde{\Phi}\left(\frac{kX}{2a^2 p} \right).\label{eq.MpAldsp-1}
\end{equation}
Following the argument in Section~\ref{eq.anoMPA}, we observe that terms corresponding to $2k \neq \square$ vanish in equation~\eqref{eq.MpAldsp-1}. 
On the other hand, if $2k = \square$, then writing $k = 2m^2$ yields
\begin{equation}\label{eq.MpAldsp-2}
       \sum_{\substack{k \in \mathbb{Z}\\ 2k = \square}}  (-1)^k \left( \frac{2k}{p}\right)\widetilde{\Phi}\left(\frac{kX}{2a^2 p} \right) = \sum_{m=1}^\infty  (-1)^{2m^2} \left( \frac{2m}{p}\right)^2\widetilde{\Phi}\left(\frac{m^2X}{a^2 p} \right) = \sum_{\substack{m=1 \\ (2m, p)=1}}^\infty  \widetilde{\Phi}\left(\frac{m^2X}{a^2 p} \right).
\end{equation}
Comparing equation~\eqref{eq.MpAldsp-2} with equation~\eqref{eq:M_A_squres-e}, we notice the following simplification to what remains of the argument from Section~\ref{eq.anoMPA}.
Namely, we do not need to introduce $H_a(w)$ since $(-1)^m$ is missing in the final expression of equation~\eqref{eq.MpAldsp-2}. In fact, it is enough to use $  H(w) = \widetilde{\Phi}(w^2)$. With this modification, we finish the proof by mimicking Section~\ref{eq.anoMPA}. 
\end{proof}
Unfolding the function $\widetilde{\Phi}$, we may recover the analogue of equation~\eqref{eq.unfolding} for $M_{\Phi}^{\dag}(y,X,\delta)$.
Subsequently, one may compute the Zubrilina density for the family $\left\{\left(\frac{d}{\cdot}\right):d\in\mathcal{G}\right\}$.
Following the proof in Section~\ref{s:1ld}, we also obtain the following analogue of Corollary \ref{c:1ld}. 
\begin{corollary}\label{c:1ld-1}
Let $\Phi$ be a Schwartz function with compact support and let $\delta \in  (\frac34,1)$. Assuming the Generalized Riemann hypothesis, we have
\begin{equation}\label{eq.1leveldensity1-1}
\lim_{y\rightarrow0^+} M^{\dag}_{\Phi}(y, \delta)
=0, \quad \text{ and } \quad \lim_{y\rightarrow \infty} M^{\dag}_{\Phi}(y, \delta) =-\frac 2 {\pi^2} \widetilde \Phi(0),
\end{equation}
where $M^\dag_\Phi(y, \delta)$ is defined in \eqref{eq.MDMdag}.
\end{corollary}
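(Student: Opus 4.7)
The plan is to adapt the proof of Corollary~\ref{c:1ld} to the setting of equation~\eqref{eq.MDMdag}. The two key differences are that the sum over $m$ in~\eqref{eq.MDMdag} has no alternating factor $(-1)^m$ and that, as in the proof of Corollary~\ref{cor-second}, the Poisson summation step uses $H(w)=\widetilde{\Phi}(w^2)$ directly, without the auxiliary cosine factor used in Section~\ref{s:realp}. First I would apply Poisson summation to the inner sum in~\eqref{eq.MDMdag}, mimicking the derivation of~\eqref{eq.possion_3}, to obtain
\begin{equation*}
\sum_{m=1}^\infty \widetilde{\Phi}\!\left(\frac{m^2}{a^2y}\right) = -\frac{\widetilde{\Phi}(0)}{2} + \frac{a\sqrt{y}}{2}\widehat{H}(0) + a\sqrt{y}\sum_{v=1}^\infty \widehat{H}(av\sqrt{y}).
\end{equation*}
Substituting into~\eqref{eq.MDMdag}, using $\sum_{(a,2)=1}\mu(a)/a^2 = 8/\pi^2$, and exchanging the order of summation in the double sum via the substitution $n=av$, I obtain the decomposition
\begin{equation*}
M^\dag_\Phi(y,\delta) = -\frac{2\widetilde{\Phi}(0)}{\pi^2} + \frac{\sqrt{y}\,\widehat{H}(0)}{4}\sum_{\substack{a\geq1\\(a,2)=1}}\frac{\mu(a)}{a} + \frac{\sqrt{y}}{2}\sum_{n=1}^\infty c_n\,\widehat{H}(n\sqrt{y}),
\end{equation*}
where $c_n := \sum_{a\mid n,\,(a,2)=1}\mu(a)/a$ is multiplicative with $c_{2^k}=1$ for all $k\geq 0$ and $c_{p^k}=1-1/p$ for odd primes $p$ and $k\geq 1$.

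Next I would carry out the Mellin analysis of equations~\eqref{eq.Bszeta}--\eqref{eq.sn1ibhh}. A direct Euler product computation yields
\begin{equation*}
C(s) \;:=\; \sum_{n=1}^\infty \frac{c_n}{n^s} \;=\; \frac{\zeta(s)}{\zeta(s+1)(1-2^{-s-1})},
\end{equation*}
which admits a meromorphic continuation to $\Re(s)>0$ with a simple pole at $s=1$ of residue $8/\pi^2$. Under the Generalized Riemann hypothesis, the contour shift argument from the proof of Corollary~\ref{c:1ld} yields $\sum_{n\geq1} c_n\widehat{H}(xn) \sim 4\widetilde{\Phi}(0)/(\pi^2 x)$ as $x\to 0^+$.

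Taking $x=\sqrt{y}$, the third term in the decomposition for $M^\dag_\Phi(y,\delta)$ tends to $2\widetilde{\Phi}(0)/\pi^2$ as $y\to 0^+$, while the middle term is $O(\sqrt{y})\to 0$; the three contributions combine as $-\tfrac{2}{\pi^2}\widetilde{\Phi}(0)+0+\tfrac{2}{\pi^2}\widetilde{\Phi}(0)=0$, establishing the first limit in~\eqref{eq.1leveldensity1-1}. For the limit $y\to\infty$, the third term vanishes by Schwartz decay of $\widehat{H}$ (cf.~\eqref{eq.fml}), leaving the main constant $-2\widetilde{\Phi}(0)/\pi^2$. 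The principal obstacle is the middle term $\frac{\sqrt{y}\widehat{H}(0)}{4}\sum_{a\text{ odd}}\mu(a)/a$, which grows linearly in $\sqrt{y}$ and has no counterpart in the proof of Corollary~\ref{c:1ld} (where the $(-1)^m$ factor induced a cancellation between even and odd $v$ in the Poisson sum, cf.~\eqref{eq.fourier-sum}). To handle it, I would invoke the identity $\sum_{n\geq1}\mu(n)/n=0$ (equivalent to the prime number theorem, hence implied by GRH) together with the relation $\mu(2m)=-\mu(m)$ for odd $m$; these combine to give $\sum_{a\text{ odd}}\mu(a)/a=0$, making the middle term identically zero and establishing the second limit.
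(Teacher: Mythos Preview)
Your proposal is correct and follows essentially the same route as the paper: Poisson summation with $H(w)=\widetilde{\Phi}(w^2)$, reorganization into a Dirichlet series $C(s)=B^\dag(s)=\zeta(s)/(\zeta(s+1)(1-2^{-s-1}))$ with residue $8/\pi^2$ at $s=1$, and the Mellin contour shift under GRH. The one point on which you are more explicit than the paper is the $\widehat{H}(0)$ term coming from the $v=0$ contribution in Poisson summation; in the non-alternating setting this term is not automatically absent (contrast equation~\eqref{eq.fourier-sum}), and you correctly dispose of it via $\sum_{a\text{ odd}}\mu(a)/a=0$, whereas the paper's displayed formula for $M_\Phi^\dag(y,\delta)$ simply omits it.
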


\begin{proof}
Recall from the proof of Corollary \ref{cor-second} that, when modifying the argument from Section~\ref{eq.anoMPA}, we do not need to introduce $H_a(w)$. 
In particular, we see that the equation corresponding to   equation~\eqref{eq:m_phi_divsum} is given by 
    \begin{equation}
        M_\Phi^\dag(y, \delta) = -\frac{2}{\pi^2}\widetilde\Phi(0) + \frac{\sqrt{y}} 2 \sum_{\substack{n=1}}^\infty \left(\sum_{\substack{a \mid n \\ (a,2)=1}} \frac{\mu(a)}{a} \right)\widehat{H}\left(n\sqrt{y} \right).
    \end{equation}
   For all $n \in \mathbb{Z}_{\geq 1}$, set 
   \[b_n^\dag = \displaystyle{ \sum_{\substack{a \mid n \\ (a,2)=1}}} \frac{\mu(a)}{a}.\] In particular, we have $b_{2^k}^\dag = 1$. We also set $B^\dag(s) = \sum_{n=1}^\infty b_n^\dag n^{-s}$, so that $\mathrm{Res}_{s = 1}B^\dag(s) =  8/{\pi^2}$. Using the same argument as in Section \ref{s:1ld}, we find:
    \begin{equation}
        \frac{\sqrt{y}}2  \sum_{n=1}^\infty \left(\sum_{\substack{a \mid n \\ (a,2) = 1}} \frac{\mu(a)}{a}\right) \widehat{H}(n\sqrt{y}) \sim \frac{\sqrt{y}} 2 \frac{4\widetilde{\Phi}(0)}{\pi^2 \sqrt{y}} = \frac{2}{\pi^2}\widetilde{\Phi}(0),
    \end{equation}
    and we deduce the limit as $y \to 0^+$. The limit as $y \to \infty$  follows from Lemma \ref{lem:decay} as before.
\end{proof}

\end{document}